\begin{document}
	\newtheorem{theorem}{Theorem}[section]
	\newtheorem{prop}[theorem]{Proposition}
	\newtheorem{lemma}[theorem]{Lemma}
	\newtheorem{cor}[theorem]{Corollary}
	\newtheorem{qn}[theorem]{Question}
	\theoremstyle{remark}
	\theoremstyle{definition}
	
	\newtheorem{prob}[theorem]{Problem}
	\newtheorem{defn}[theorem]{Definition}
	\newtheorem{notation}[theorem]{Notation}
	\newtheorem{fact}[theorem]{Fact}
	\newtheorem{conj}[theorem]{Conjecture}
	\newtheorem{claim}[theorem]{Claim}
	\newtheorem{example}[theorem]{Example}
	\newtheorem{rem}[theorem]{Remark}
	\newtheorem{assumption}[theorem]{Assumption}
	\newtheorem{scholium}[theorem]{Scholium}
	
	\newtheorem{conv}[theorem]{Notation and Convention}
	
	\newcommand{\HAT}{\widehat}
	\newcommand{\map}{\rightarrow}
	\newcommand{\C}{\mathcal C}
	\newcommand\AAA{{\mathcal A}}
	\def\AA{\mathcal A}
	
	\def\L{{\mathcal L}}
	\def\al{\alpha}
	\def\A{{\mathcal A}}

	\newcommand\GB{{\mathbb G}}
	\newcommand\BB{{\mathcal B}}
	\newcommand\DD{{\mathcal D}}
	\newcommand\EE{{\mathcal E}}
	\newcommand\FF{{\mathcal F}}
	\newcommand\GG{{\mathcal G}}
	\newcommand\HH{{\mathbb H}}
	\newcommand\II{{\mathcal I}}
	\newcommand\JJ{{\mathcal J}}
	\newcommand\KK{{\mathcal K}}
	\newcommand\LL{{\mathcal L}}
	\newcommand\MM{{\mathcal M}}
	\newcommand\NN{{\mathbb N}}
	\newcommand\OO{{\mathcal O}}
	\newcommand\PP{{\mathcal P}}
	\newcommand\QQ{{\mathbb Q}}
	\newcommand\RR{{\mathbb R}}
	\newcommand\SSS{{\mathcal S}}
	\newcommand\TT{{\mathcal T}}
	\newcommand\UU{{\mathcal U}}
	\newcommand\VV{{\mathcal V}}
	\newcommand\WW{{\mathcal W}}
	\newcommand\XX{{\mathcal X}}
	\newcommand\YY{{\mathcal Y}}
	\newcommand\YB{{\mathbb Y}}
	\newcommand\ZZ{{\mathcal Z}}
	\newcommand\ZI{{\mathbb Z}}
	\newcommand\hhat{\widehat}
	\newcommand\flaring{{Corollary \ref{cor:super-weak flaring} }}
	\newcommand\pb{\bar{p}_B}
	\newcommand\pp{\bar{p}_{B_1}}
	\newcommand{\eg}{\overline{EG}}
	\newcommand{\eh}{\overline{EH}}
	\def\Ga{\Gamma}
	\def\Z{\mathbb Z}
	
	\def\diam{\operatorname{diam}}
	\def\dist{\operatorname{dist}}
	\def\hull{\operatorname{Hull}}
	\def\id{\operatorname{id}}
	\def\Im{\operatorname{Im}}
	
	\def\barycenter{\operatorname{center}}

	\def\length{\operatorname{length}}
	\newcommand\RED{\textcolor{red}}
	\newcommand\BLUE{\textcolor{blue}}
	\newcommand\GREEN{\textcolor{green}}
	\def\mini{\scriptsize}
	
	\def\acts{\curvearrowright}
	\def\embed{\hookrightarrow}
	
	\def\ga{\gamma}
	\newcommand\la{\lambda}
	\newcommand\eps{\epsilon}
	\def\geo{\partial_{\infty}}
	\def\bhb{\bigskip\hrule\bigskip}

	\title[Homeomorphism types of Floyd boundaries]{Homeomorphism types of Floyd boundaries of infinite-ended groups}

	\author{Subhajit Chakraborty and Ravi Tomar}
	\email{ravitomar547@gmail.com}
	\address{Department of Mathematical Sciences,
		Indian Institute of Science Education and Research Bhopal,
		Bhopal Bypass Road, Bhauri 462066
		Madhya Pradesh, India}
	
	\email{scac0612@gmail.com}

	\subjclass[2020]{20F65, 20F67 }

	\keywords{Bass--Serre tree, Floyd boundary, free products}
	
	\date{\today}
	
	\begin{abstract} 
	Suppose $G$ is a finitely generated infinite group and $\mathcal G$ is a graph of groups decomposition of $G$ such that the edge groups are finite. This paper establishes that the topology of the Floyd boundary of $G$ is uniquely determined by the topology of the Floyd boundary of each vertex group of $\mathcal G$.
	\end{abstract}
	\maketitle
	\section{Introduction}
One of the first compactification of groups was introduced by W. Floyd in \cite{floyd}. It is now known as Floyd boundary. In general, the Floyd boundary depends on a scaling function known as Floyd function.  This compactification exhibits a strong connection to the theory of hyperbolic and relatively hyperbolic groups. Given a hyperbolic group $G$, there exists a Floyd function $f$ such that Floyd boundary of $G$ with respect to $f$ is homeomorphic to Gromov boundary of $G$ (see \cite{gromov-hypgps}). Similarly, when $G$ is a finitely generated group hyperbolic relative to a collection of subgroups, Gerasimov in \cite{gerasimov-floyd-map} shows that there exists a Floyd function $f$ and a continuous $G$-equivariant map from Floyd boundary of $G$ with respect to $f$ to Bowditch boundary of $G$. Floyd boundary also has connection to convergence actions (see \cite{karlsson}).

In \cite{martin-swiat}, Martin--\`{S}wi\polhk{a}tkowski proved that the topology of Gromov boundary of a free product of hyperbolic groups is uniquely determined by the topology of the Gromov bounary of each free factor. In this paper, we drop the hypothesis that each free factor is hyperbolic and prove a similar result for Floyd boundary. Throughout the paper, all groups are assumed to be finitely generated. The following is the main result of this paper:

\vspace{.2cm}
{\bf Theorem 1.} (Theorem \ref{general case}) {\em Suppose $G$ and $H$ are two infinite groups. Suppose $\mathcal G$ and $\mathcal H$ are graphs of groups decomposition of $G$ and $H$, respectively such that the edge groups of $\mathcal G$ and $\mathcal H$ are finite. Then, we have the following:
	\begin{enumerate}
		\item If each vertex group of $\mathcal G$ is elementary and $G$ has infinitely many ends then the Floyd boundary of $G$ is homeomorphic to the Cantor set.
		\item Suppose at least one vertex group of either $\mathcal G$ or $\mathcal H$ is non-elementary. Let $h(\mathcal G)$ and $h(\mathcal H)$ denote the set of homeomorphism types of Floyd boundaries of non-elementary vertex groups of $\mathcal G$ and $\mathcal H$, respectively. If $h(\mathcal G)=h(\mathcal H)$ then the Floyd boundary of $G$ is homeomorphic to Floyd boundary of $H$.
\end{enumerate}}

A non-elementary group is one that is not virtually cyclic. One interesting consequence of this theorem is that, for any Floyd function, the Floyd boundary of a virtually free group is homeomorphic to the Cantor set (see Corollary \ref{virtually free cor}).  We need the following result to prove the previous theorem, which tells us the homeomorphism type of the Floyd boundary of a free product.

\vspace{.2cm}
{\bf Theorem 2.} (Theorem \ref{main theorem}) {\em For $n,m\geq 2$, let $G_1=A_1\ast\dots \ast A_n$ and $G_2=B_1\ast\dots \ast B_m$ be two free products where at least one free factor of either $G_1$ or $G_2$ is non-elementary. Let $h(G_1)$ and $h(G_2)$ denote the set of homeomorphism types of Floyd boundaries of non-elementary free factors of $G_1$ and $G_2$, respectively. If $h(G_1)=h(G_2)$ then the Floyd boundary of $G_1$ is homeomorphic to the Floyd boundary of $G_2$.}

\vspace{.2cm}
{\bf Remark.}
	In the above theorem, if each free factor of $G_1$ is elementary then either Floyd boundary of $G_1$ contains $2$ points or it is homeomorphic to the Cantor set (see Corollary \ref{homeo free product cyclic}).
	
	\vspace{.2cm}

Next, we look at the connected components of the Floyd boundary of an infinite-ended group and obtain a partial converse to Theorem 1.
 \vspace{.2cm}
 
 {\bf Theorem 3.} (Theorem \ref{partial converse}) {\em Suppose $G$ and $H$ are two finitely generated infinite-ended groups. Suppose $\mathcal G$ and $\mathcal H$ are graphs of groups decomposition of $G$ and $H$, respectively such that the edge groups of $\mathcal G$ and $\mathcal H$ are finite, and each non-elementary vertex group of $\mathcal G$ and $\mathcal H$ is $1$-ended. Also, assume that at least one vertex group of $\mathcal G$ and $\mathcal H$ is non-elementary. Then, we have the following:
 	
 	Let $h(\mathcal G)$ and $h(\mathcal H)$ denote the set of homeomorphism types of Floyd boundaries of non-elementary vertex groups of $\mathcal G$ and $\mathcal H$, respectively. If Floyd boundary of $G$ is homeomorphic to Floyd boundary of $H$ then $h(\mathcal G)=h(\mathcal H)$.} 
\begin{comment}
	
\vspace{.2cm}
With an additional assumption on the Floyd functions, we generalize the above theorem for graphs of groups over finite groups.

\vspace{.2cm}
{\bf Theorem 2.} {\em Suppose $G$ and $H$ are two infinite-ended groups and $f$ is a Floyd function. Let $\mathcal G$ and $\mathcal H$ be graphs of groups decomposition of $G$ and $H$ such that the following hold:
	\begin{enumerate}
		\item All the vertex groups of $\mathcal G$ and $\mathcal H$ are non-elementary (i.e. not virtually cyclic).
		\item All the edge groups of $\mathcal G$ and $\mathcal H$ are finite.
		\item There exists a constant $D>0$ such that $f(n)/f(2n)\leq D$.
	\end{enumerate}
Let $h(\mathcal G)$ and $h(\mathcal H)$ denote the set of homeomorphism types of Floyd boundaries of vertex groups of $\mathcal G$ and $\mathcal H$, respectively. If $h(\mathcal G)=h(\mathcal H)$ then $\partial_f(G)$ is homeomorphic to $\partial_f(H)$.}

\end{comment}
\vspace{.2cm}

{\bf Organization of the paper:} In Section \ref{2}, we recall some basic facts about Floyd boundary and exlain Cayley graphs of amalgamted free products and HNN extensions. Section \ref{3} contains a construction of Floyd boundaries of the amalgamated free products and HNN extensions over finite groups. In Section \ref{4}, we discuss the homeomorphism type of Floyd boundaries of free products. Then, in Section \ref{5}, we discuss homeomorphism types of Floyd boundaries of amalgamated free product and HNN extension. There, we deduce a number of interesting corollaries too. In Section \ref{6}, we give a proof of Theorem \ref{general case}. Finally, in Section \ref{7}, we finish the paper by discussing the connected components of Floyd boundary of a graph of groups with finite edge groups.
\section{Preliminaries}\label{2}
\subsection{Coarse geometric notion} Let $X$ be a metric space and let $x,y\in X$. A {\em geodesic} joining $x$ and $y$ is an isometric embedding $\alpha:[a,b]\subset\mathbb R\to X$ such that $\alpha(a)=x$ and $\alpha(b)=y$. If any two points of $X$ can be joined by a geodesic then $X$ is called a {\em geodesic metric space}. A geodesic joining $x$ and $y$ will be denoted by $[x,y]$. Throughout the paper, all graphs are assumed to be connected. For $z\in X$ and $A\subset X$, we shall denote by $d(z,A)$ the quantity $\inf\{d(z,a):a\in A\}$. 

Suppose $(X,d_X)$ and $(Y,d_Y)$ are two metric spaces. Given $L\geq 0$, an $L$-{\em bi-Lipschitz} map $\phi:X\to Y$ is one such that $\dfrac{1}{L}d_X(x,x')\leq d_Y(\phi(x),\phi(x'))\leq L d_X(x,x').$ The map $\phi$ is said to be bi-Lipschitz if it is $L$-bi-Lipschitz for some $L\geq 0$.
 \begin{comment}
	
Given $\lambda\geq 1,\epsilon\geq 0$, a map 
$\phi:X\rightarrow Y$ is said to be a \emph{$(k,\epsilon)$-quasiisometric embedding} if for all $x,x'\in X$ we have,
$$\dfrac{1}{k}d_X(x,x')-\epsilon\leq d_Y(\phi(x),\phi(x'))\leq k d_X(x,x')+\epsilon.$$
The map $\phi$ is said to be $(k,\epsilon)$-\emph{quasiisometry} if $\phi$ is a \emph{$(k,\epsilon)$-quasiisometric embedding} 
and moreover, $N_D(f(X))=Y$ for some $D\geq 0$. The map $\phi$ is said to be {\em quasisometry} if it is $(k,\epsilon)$-quasiisometry for some $k\geq 1,\epsilon\geq 0$.
\end{comment}

\subsection{Floyd completion of groups}
The notion of Floyd boundary of a group was introduced by W. Floyd in \cite{floyd}. Let $G$ be a group generated by a finite set, say, $S$. Let $\Gamma_{G,S}$ be the Cayley graph of $G$ with respect to $S$. By assigning each edge a unit length, we see that $\Gamma_{G,S}$ is naturally a complete geodesic metric space. We denote this metric by $d$. Let $f:\mathbb N\to \mathbb{R}_{> 0}$ be a function satisfying the following two conditions:

(1) $\Sigma_{n=1}^{\infty}f(n)<\infty$.

(2) There exists $\lambda>0$ such that $f(n)\geq f(n+1)\geq \lambda f(n)$ for all $n\in\mathbb N$.

We call $f$ a {\em Floyd function.} To simplify the construction of the Floyd boundary, for any Floyd function f, we deﬁne $f(0):=f(1).$ An example of a Floyd function is $f(n)={1}/{n^2}$. We define a new metric on $\Gamma_{G,S}$ by scaling the length of edges. Let $e_G$ denote the identity element of $G$. For an edge $e$ in $\Gamma_{G,S}$ connecting the vertices $g,h$, we define the new edge length $l_f(e)$ of $e$ to be $f(d(e_G,\{g,h\}))$. If $\alpha$ is a path in $\Gamma_{G,S}$, given by the consecutive edges $e_1,e_2,\dots ,e_n$, then the {\em Floyd length} of $\alpha$ is defined as $\Sigma_{i=1}^nl_f(e_i)$. For $v,w\in \Gamma_{G,S}$, define $$d_f(v,w)=\inf\{l_f(\alpha):\alpha \text{ is a path in $\Gamma_{G,S}$ joining $v$ and $w$}\}.$$ It is easy to see that $d_f$ is a metric and it is called the {\em Floyd metric} on $\Gamma_{G,S}$.

\begin{defn}
	[Floyd boundary] The Cauchy completion $\overline{\Gamma}_{G,S}$ of the metric space $(\Gamma_{G,S},d_f)$ is called the {\em Floyd completion.} The subspace $\overline{\Gamma}_{G,S}\setminus\{\Gamma_{G,S}\}$ is called {\em Floyd boundary} of $G$. It is denoted by $\partial_f(G)$.
\end{defn}
This definition does not depend on the choice of finite generating set $S$ of $G$ (see \cite[Lemma 2]{floyd}). Thus, we suppress the dependence on the generating set and denote the completion by $\overline{\Gamma}_G$. Note that (1) $\Gamma_{G}$ has finite $d_f$-diameter as $f$ is summable. (2) $\overline{\Gamma}_{G}$ is compact. Since the group $G$ acts by isometries on $\Gamma_{G}$, this action extends to an action of $G$ on $\overline{\Gamma}_{G}$ by homeomorphism (see \cite{karlsson} for details). Similarly, one can define the Floyd boundary of a locally finite graph.
\begin{defn}
	[Shortest sequence] A sequence $\{w_n\}\subset\Gamma_G$ is said to be a {\em shortest sequence} if, for all $i$, $d(e_G,w_i)=i$ and $d(w_i,w_{i+1})=1$.
\end{defn}
\begin{lemma}\label{shortest cauchy}
	If $\{w_n\}$ is a shortest sequence in $\Gamma_G$ then $\{w_n\}$ is a Cauchy sequence in $(\Gamma_G,d_f)$.
\end{lemma}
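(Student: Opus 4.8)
The plan is to bound the Floyd distance between two terms of the sequence by the Floyd length of the obvious path that runs along the sequence itself, and then to invoke summability of $f$ to make this bound small. The only input from the definition of a shortest sequence that I will genuinely need is the value of the weight $l_f$ on each edge of this path.

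First I would record that consecutive terms are adjacent: since $d(w_i,w_{i+1})=1$, there is an edge $e_i$ of $\Gamma_G$ joining $w_i$ and $w_{i+1}$. The concatenation $e_n,e_{n+1},\dots,e_{m-1}$ is then a path joining $w_n$ and $w_m$ for any $m>n$. Next I would compute the Floyd length of each such edge. By definition $l_f(e_i)=f\big(d(e_G,\{w_i,w_{i+1}\})\big)$, and since $d(e_G,w_i)=i$ and $d(e_G,w_{i+1})=i+1$, we have $d(e_G,\{w_i,w_{i+1}\})=\min\{i,i+1\}=i$, so $l_f(e_i)=f(i)$. This is the step where the shortest-sequence hypothesis is actually used, and it is the crux of the estimate.

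Taking the infimum in the definition of $d_f$ over the single path above, I would then obtain, for all $m>n$,
\[
d_f(w_n,w_m)\leq \sum_{i=n}^{m-1} l_f(e_i)=\sum_{i=n}^{m-1} f(i)\leq \sum_{i=n}^{\infty} f(i).
\]
Finally, because $\sum_{n=1}^{\infty} f(n)<\infty$, the tails $\sum_{i=n}^{\infty} f(i)$ tend to $0$ as $n\to\infty$; hence given $\eps>0$ one can choose $N$ so that $\sum_{i=N}^{\infty} f(i)<\eps$, which forces $d_f(w_n,w_m)<\eps$ for all $m>n\geq N$. This establishes that $\{w_n\}$ is Cauchy in $(\Gamma_G,d_f)$.

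I do not anticipate a serious obstacle here; the statement is essentially an application of the Cauchy criterion to a convergent tail. The one point requiring care is the edge-weight computation $l_f(e_i)=f(i)$, which relies on $w_i$ being a \emph{closest} point of the edge $e_i$ to $e_G$; this is exactly guaranteed by the two defining conditions of a shortest sequence, so no monotonicity property of $f$ beyond summability is needed for this lemma.
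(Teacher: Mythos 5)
Your proof is correct and is essentially the same as the paper's: both bound $d_f(w_n,w_m)$ by the Floyd length of the path along the sequence itself, where the $i$-th edge has weight $f(i)$, and then invoke summability of $f$ to make the tail small. Your write-up just makes explicit the edge-weight computation that the paper leaves implicit.
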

\begin{proof}
	Since $\Sigma_{r=1}^{\infty}f(r)<\infty$, given $\epsilon>0$, there exists $N\in\mathbb N$ such that $\Sigma_{r=N}^{\infty}f(r)\leq \epsilon$. As $\{w_n\}$ is shortest, for all $i,j\geq N$, $$d_f(w_i,w_j)\leq\Sigma_{r=N}^{\infty}f(r)\leq\epsilon.$$ Hence the lemma.
\end{proof}
\begin{comment}
	\begin{lemma}Given $D\geq 0$ and a Floyd function $f$, the following holds:
		
		Let $\{x_n\}$ and $\{y_n\}$ be two Cauchy sequences in $G$ such that $d(x_n,y_n)\leq D$. Then, $\{x_n\}$ and $\{y_n\}$ represent the same point in $\partial_f(G)$.
	\end{lemma}
	\begin{proof}
		
	\end{proof}

	We finish this subsection by recording the following lemma which follows from \cite[Lemma 2.5]{gerasimov-potya-floyd}.
	
	\begin{lemma}\label{qi lemma}
		Let $G$ and $H$ are two groups which are quasiisometric. Supppose there exists $D>0$ and a Floyd function $f$ such that $f(n)/f(2n)\leq D$. Then, $\partial_{f}(G)$ is homeomorphic to $\partial_f(H)$.
	\end{lemma}
	\begin{proof}
		
	\end{proof}
\end{comment}
\subsection{A tree of Cayley graphs corresponding to amalgamated free product}\label{2.3} 

Suppose $A$ and $B$ are two finitely generated groups, and $H$ is a finite group. Let $G=A\ast_H B$ be an amalgamated free product with monomorphisms $i_A: H\rightarrow A$ and $i_B: H \rightarrow B$. Choose finite generating sets $S_A$ and $S_B$ of $A$ and $B$ such that $S_A \supset i_A(H-\{1\})$ and $S_B \supset i_B(H-\{1\})$. Let $\Gamma$ be the Cayley graph of $G$ with respect to the generating set $S:=S_A \cup S_B$. We will denote the associated word metric on $\Gamma$ by $d$. We start this subsection by defining the Bass--Serre tree (\cite{serre-trees}) corresponding to $G$.

Let $\tau$ be a unit interval with vertices $v_A$ and $v_B$. We define a tree $T$, called the {\em Bass--Serre tree} of $G$, as $G\times \tau$ divided by the transitive closure of the relation $\sim$ defined as follows:  
$$(g_1,v_A)\sim (g_2,v_A) \text{ if } g_1^{-1}g_2\in A,$$ 
$$(g_1,v_B)\sim (g_2,v_B) \text{ if } g_1^{-1}g_2\in B,$$ 
$$(g_1,t) \sim (g_2,t) \text{ if } g_1^{-1}g_2\in H \text{ and } t \in \tau.$$ 
The tree $T$ comes with a natural action of $G$. For each vertex $v\in T$, we denote by $G_v$ the stabilizer of $v$ in $G$.

Let $\Gamma_A$ and $\Gamma_B$ be the Cayley graphs of $A$ and $B$ with respect to $S_A$ and $S_B$, respectively. Let $\Gamma_{i_A(H)}$ and $\Gamma_{i_B(H)}$ be the subgraphs of $\Gamma_A$ and $\Gamma_B$ having vertices $i_A(H)$ and $i_B(H)$, respectively. Since $S_A \supset i_A(H-\{1\})$ and $S_B \supset i_B(H-\{1\})$, the subgraphs $\Gamma_{i_A(H)}$ and $\Gamma_{i_B(H)}$ are subgraphs of diameter $1$. 
Define a graph $Y$ as the union of $\Gamma_A$ and $\Gamma_B$ where $i_A(h)$ is identified with $i_B(h)$ for all $h\in H$. Thus, in $Y$, we obtain a subgraph which is union of $\Gamma_{i_A(H)}$ and $\Gamma_{i_B(H)}$ where $i_A(h)$ is identified with $i_B(h)$ for all $h\in H$. We will denote this subgraph by $\Gamma_H$. Finally, we define the graph $\Gamma$ as an equivalence relation on $G\times Y$ induced by 
$$(g_1,y_1)\sim(g_2,y_2) \text{ if }y_1,y_2\in \Gamma_A \text{ and } g_2^{-1}g_1y_1=y_2,$$
$$(g_1,y_1)\sim(g_2,y_2) \text{ if }y_1,y_2\in \Gamma_B \text{ and } g_2^{-1}g_1y_1=y_2.$$
Note that $\Gamma$ is the Cayley graph of $G$ with respect to $S$. Define a map $p:\Gamma\to T$ by sending $(g,\Gamma_A)$ to $(g,v_A)$ and $(g,\Gamma_B)$ to $(g,v_B)$ for $g\in G$. As $(g,\Gamma_A)$ and $(g,\Gamma_B$) both contain the subgraph $(g, \Gamma_H)$, $p$ sends $(g,\Gamma_H)$ to $(g,v_A)$ as well as to $(g,v_B)$. Since $(g,v_A)$ and $(g,v_B)$ are $1$-distance apart in $T$, $p$ is coarsely well-defined.

\vspace{.2cm}
{\bf Notation:} We will write the edge $(g,\tau)$ of $T$ as $g\tau$. If, for $g\in G$, either $v=(g,v_A)$ or $v=(g,v_B)$ is a vertex of $T$ then the Cayley graph of $G_v$ is isometric to either $(g,\Gamma_A)$ or $(g,\Gamma_B)$. For $(g,v_A)\in T$, we denote by $\Gamma_{gA}$ the preimage of $(g,v_A)$ under $p$ which is nothing but $(g,\Gamma_A)$. Similarly, for $(g,v_B)\in T$, we denote by $\Gamma_{gB}$ the preimage of $(g,v_B)$ under $p$. For a vertex $v\in T$, we denote by $\Gamma_v$ the preimage of $v$ under $p$. We will denote the subgraphs $(g,\Gamma_H)$ by $\Gamma_{gH}$. After this point, while using the notation $g\tau,\Gamma_{gA}$ and $gA$, we will assume $g$ to be the shortest possible coset representative. Similarly, we will assume the same for cosets of $B$ and $H$.
\subsection{A tree of Cayley graphs corresponding to HNN extension} \label{phnn} Let $A$ be a finitely generated group with finite subgroups $F$ and $F'$ such that $F \simeq F'$ as groups. Let $G=A*_{F\simeq F'}$ be a HNN extension of $A$. Let $\phi:F\to F'$ be the isomorphism. Then, the presentation of $G$ is $\langle S_A,t: t^{-1}ft=\phi(f), f\in F\rangle$. Let $S_A$ be a finite generating set of $A$, which contains $F\setminus\{1\}$ and $F'\setminus\{1\}$. Let $\Gamma$ be the Cayley graph of $G$ with respect to $S:=S_A \cup \{t\}$, where $t$ is the stable letter. We will denote the word metric on $\Gamma$ by $d$. 

Firstly, we give a description of the Bass--Serre tree, say, $T$ of $G$. The vertex set of $T$, say, $V(T)$ is the set of left cosets of $A$ in $G$. The set of edges of $T$, say, $E(T)$ is the set of left cosets of $F$ in $G$. For all $g \in G$, vertices $gA$ and $gtA$ are connected by the edge $gF$.

Let $\Gamma_A$ be the Cayley graph of $A$ with respect to $S_A$. Consider the graph $Y$ induced by the following equivalence relation on $G\times \Gamma_A$ :
$$(g_1,y_1)\sim(g_2,y_2) \text{ if }y_1,y_2\in \Gamma_A \text{ and } g_2^{-1}g_1y_1=y_2.$$
Denote the equivalence class $[g,x]$ by $gx$ and $\Gamma_{gA} $ be the subgraph spanned by the vertices in $\{gx \in Y : x\in \Gamma_A \}$. These are the connected components of $Y$, which are isometric to $\Gamma_A$ with the word metric.
Now, the Cayley graph of $G$ can defined by using the graph $Y$ as follows:

For all $g \in G$, the vertices $gA$ and $gtA$ are connected in the Bass-Serre tree $T$. Similarly, join the vertices $gf \in \Gamma_{gA}$ and $gft(=gt\phi(f)) \in \Gamma_{gtA}$, for all $f \in F$.  Denote the subgraph spanned by the vertices in $\{gf,gt\phi(f): f\in F\}$ by $\Gamma_{gF}$. From this discussion, we can say that the subgraphs $\Gamma_{gA}$ and $\Gamma_{gtA}$ are connected through the subgraph $\Gamma_{gF}$. Hence, we can define the natural projection map, $p: \Gamma \to T$, such that $p(\Gamma_{gA}):=gA$, which in this case is well-defined. 

\section{A Construction of Floyd boundaries of an amalgamated free product and HNN extension}\label{3}
 {\bf Amalgamated free product case.} Let $G=A\ast_H B$ be as in Subsection \ref{2.3}. The next subsection is devoted to a construction of Floyd boundary of $G$. When $A$ and $B$ are hyperbolic and $H$ is trivial, Martin--\`{S}wi\polhk{a}tkowski gave a construction of Gromov boundary of $G$ in \cite{martin-swiat}. We closely follow that construction. Let $f$ be a Floyd function.

 {\bf Notation:}  We retain the notations used in the Subsection \ref{2.3}.  We will denote the scaled Cayley graph of $G$ with respect to the generating set $S$ by $\Gamma_f$ with the Floyd metric $d_f$. For the sake of ease, we will denote the metric subspaces $(\Gamma_{gA},d_f)$, $(\Gamma_{gB}, d_f)$ and $(\Gamma_{gH},d_f)$ by $\Gamma_{gA}^f$, $\Gamma_{gB}^f$ and $\Gamma_{gH}^f$ respectively. Similarly, when $v \in T$, we will denote the metric subspace $(\Gamma_v, d_f)$ by $\Gamma_v^f$. 
 Define $\partial_f(\Gamma_v^f):= \overline{\Gamma}_v^f- \Gamma_v^f$, where $\overline{\Gamma}_v^f$ is the metric completion of $\Gamma_v^f$. Let $\partial_f(A)$ and $\partial_f(B)$ be the Floyd boundary of $A$ and $B$, respectively.
 
 \subsection{Boundaries of the stabilizers.} Let $\delta_{Stab}(\Gamma_f)$ be the set $G\times(\partial_{f}A\sqcup\partial_{f}B)$ divided by the equivalence relation induced by
 $$(g_1,\xi_1)\sim (g_2,\xi_2) \text{ if } \xi_1,\xi_2\in \partial_{f}A, \hspace{0.07cm}g_2^{-1}g_1\in A \text{ and } g_2^{-1}g_1\xi_1=\xi_2,$$
 $$(g_1,\xi_1)\sim (g_2,\xi_2) \text{ if } \xi_1,\xi_2\in \partial_{f}B, \hspace{0.07cm} g_2^{-1}g_1\in A \text{ and } g_2^{-1}g_1\xi_1=\xi_2.$$
 
 The equivalence class of an element $(g,\xi)$ is denoted by $[g,\xi]$. The set $\delta_{Stab}(\Gamma_f)$ comes with a natural action of $G$ on the left. This also comes with a natural projection onto the set of vertices of $T$. The preimage of each vertex $v\in T$ is denoted by $\partial_{f}(G_v)$ which is identified with $\partial_f(\Gamma_v^f)$ by Lemma \ref{homeo identification}.
 
 Let $\partial T$ denotes the Gromov boundary of $T$. Then, as a set, we denote the {\em boundary} of $\Gamma_f$ as $$\delta(\Gamma_f):=\delta_{Stab}(\Gamma_f)\sqcup\partial T.$$
 
 Also, as a set, we define the {\em compactification} of $\Gamma_f$ as $$\overline{\Gamma}_f:=\Gamma_f\cup\delta(\Gamma_f).$$
 This set comes with a natural action of $G$ and with a natural map $p:\overline{\Gamma}_f\to \overline{T}$, which is coarsely well-defined. The preimage of a vertex $v\in T$ is $\Gamma_v^f\cup\partial_{f}(G_v)$ that is identified, as a set, with $\overline{\Gamma}_v^f$ which is the metric completion of $\Gamma_v^f$. 

\subsection{Two topologies on $\overline{\Gamma}_f$} \label{amalgam topo} There are two ways to put a topology on $\overline{\Gamma}_f$. Here, we give a description of these topologies, and we prove that these two are equivalent.

{\bf Topology using neighborhood system.}  For a point $x\in \Gamma_f$, we set a basis of open neighborhoods of $x$ in $\Gamma_f$ to be a basis of open neighborhoods of $x$ in $\overline{\Gamma}_f$. Now we define a basis of open neighborhoods for points of $\delta(\Gamma_f)$. Fix a vertex $v_0$ of $T$.
\begin{enumerate}
	\item Let $\xi\in\delta_{Stab}(\Gamma_f)$. Suppose $v$ is the vertex of $T$ such that $\xi\in\partial_{f}(G_v)$. Let $U$ be an open neighborhood of $\xi$ in $\overline{\Gamma}_v^f$. 
	Define $V_U$ to be the set of all $z\in\overline{\Gamma}_f\setminus\overline{\Gamma}_v^f$ such that the first edge of the geodesic $[v,p(z)]$ or the geodesic ray $[v,p(z))$ is $g \tau$ where $g$ is a vertex in $U$. Then we set $$V_U(\xi):=U\cup V_U.$$ A neighborhood basis of $\xi$ in $\overline{\Gamma}_f$ is a collection of set $V_U(\xi)$ where $U$ runs over a neighborhood basis of $\xi$ in $\overline{\Gamma}_v^f$.
	\item Let $\eta\in \partial T$. Let $T_n(\eta)$ be the subtree of $T$ consisting of those elements $x$ of $T$ for which the first $n$ edges of $[v_0,x]$ and $[v_0,\eta)$ are the same. Suppose $u_n(\eta)$ is the vertex on $[v_0,\eta)$ at the distance $n$ from $v_0$. Let $\partial (T_n{(\eta)})$ denotes the Gromov boundary of $T_n{(\eta)}$ and let $\overline{T_n(\eta)}=T_n(\eta)\cup\partial(T_n(\eta))$. We define $$V_n(\eta)=p^{-1}(\overline{T_n(\eta)}\setminus\{u_n(\eta)\}).$$ 
	We take the collection $\{V_n(\eta):n\geq 1\}$ as a basis of open neighborhoods of $\eta$ in $\overline{\Gamma}_f$.
\end{enumerate}
We skip a straightforward verification that the above collections of sets satisfy the axioms for the basis of open neighborhoods.

{\bf Metric on $\overline{\Gamma}_f.$} Recall that $d_f$ is the Floyd metric on $\Gamma_f$ and $\Gamma_{gA}^f$, $\Gamma_{gB}^f$ are the notations for the metric subspaces as indicated in the first paragraph of this section.
\begin{comment}
	
\begin{rem}\label{len}
	It is straightforward to check that $\Gamma_v^f$, for every $v \in T$, is a length space since the generating set $S$ for the Cayley graph $\Gamma$ contains $H-\{1\}$.
\end{rem}
\end{comment}
Before defining the metric on $\overline{\Gamma}_f$, we record the following observations which follows from \cite[p. 5366]{karlsson}:
\begin{lemma}\label{homeo identification}
	For $g\in G$, the natural translation map from $\Gamma_A^f$ to $\Gamma_{gA}^f$ is bi-Lipschitz and hence induces a homeomorphism from $\partial_f(A)$ to $\partial_f(\Gamma_{gA}^f)$. Similarly, $\partial_f (B)$ is homeomorphic to $\partial_f(\Gamma_{gB}^f)$. \qed
\end{lemma}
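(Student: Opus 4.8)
The plan is to isolate the single geometric estimate behind the statement—that left translation is bi-Lipschitz in the Floyd metric—and then feed it through the standard functorial behaviour of metric completions, finishing with an identification of the intrinsic and subspace Floyd boundaries of the vertex group.

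First I would record Karlsson's translation estimate. Fix $g\in G$ and write $|g|=d(e_G,g)$. Since $G$ acts on $\Gamma$ by word-isometries, left-invariance gives $|d(e_G,z)-d(e_G,gz)|\le|g|$ for every vertex $z$; consequently, for any edge with endpoints $\{v,w\}$, the basepoint-distances $d(e_G,\{v,w\})$ and $d(e_G,\{gv,gw\})$ differ by at most $|g|$. The defining inequality $\lambda f(n)\le f(n+1)\le f(n)$ iterates to $\lambda^{|g|}f(n)\le f(m)\le\lambda^{-|g|}f(n)$ whenever $|n-m|\le|g|$, so each edge of a path $\alpha$ and its translate in $g\alpha$ have $l_f$-lengths comparable up to the factor $\lambda^{\pm|g|}$. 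Summing and taking the infimum over paths (translation being a bijection on paths) yields
$$ \lambda^{|g|}\, d_f(x,y)\;\le\;d_f(gx,gy)\;\le\;\lambda^{-|g|}\, d_f(x,y)\qquad (x,y\in\Gamma), $$
that is, $z\mapsto gz$ is $\lambda^{-|g|}$-bi-Lipschitz on $(\Gamma_f,d_f)$.

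Next I would restrict this map to the coset subgraph. Translation by $g$ carries $\Gamma_A$ bijectively onto $\Gamma_{gA}$, so by the displayed inequality it is a $\lambda^{-|g|}$-bi-Lipschitz homeomorphism $\Gamma_A^f\to\Gamma_{gA}^f$ for the subspace metrics. A bi-Lipschitz map of metric spaces is uniformly continuous with uniformly continuous inverse, hence extends uniquely to a homeomorphism of Cauchy completions $\overline{\Gamma}_A^f\to\overline{\Gamma}_{gA}^f$ carrying $\Gamma_A^f$ onto $\Gamma_{gA}^f$; restricting to the complements gives a homeomorphism $\partial_f(\Gamma_A^f)\to\partial_f(\Gamma_{gA}^f)$. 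The same argument with $B$ in place of $A$ gives $\partial_f(\Gamma_B^f)\cong\partial_f(\Gamma_{gB}^f)$.

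Finally I would identify $\partial_f(\Gamma_A^f)$ with the intrinsic Floyd boundary $\partial_f(A)$. The subgraph $\Gamma_A$ contains $e_G$, and the tree-of-spaces structure of $\Gamma$ over $T$ shows it is isometrically embedded: a path in $\Gamma$ joining two vertices of $\Gamma_A$ can leave $\Gamma_A$ only by crossing an edge of $T$ and, to return, must recross the same edge, so it re-enters through the single diameter-one subgraph $\Gamma_{aH}$ (here $H$ is finite and $S\supset i_A(H-\{1\})$). Thus $d(e_G,x)=d_A(e,x)$ for every vertex $x\in A$, so the ambient $l_f$-scaling of each edge of $\Gamma_A$ equals its intrinsic scaling, and the excursions just described are based loops that cannot shorten Floyd length. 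Hence the subspace Floyd metric on $\Gamma_A$ coincides with the Floyd metric of $A$, giving $\partial_f(\Gamma_A^f)=\partial_f(A)$; combining the three steps yields $\partial_f(A)\cong\partial_f(\Gamma_{gA}^f)$, and likewise $\partial_f(B)\cong\partial_f(\Gamma_{gB}^f)$. I expect the main obstacle to be precisely this last step: the translation estimate is essentially Karlsson's and the extension to completions is formal, but verifying that the subspace Floyd metric on $\Gamma_A$ genuinely agrees with the intrinsic one requires the isometric embedding of the vertex group together with the finiteness of the edge group to rule out Floyd-shortcuts through neighbouring cosets.
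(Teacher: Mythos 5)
Your proposal is correct and follows essentially the same route as the paper: the paper's own argument (attributed to Karlsson) likewise derives a bi-Lipschitz estimate for left translation from the boundedness of the change in basepoint-distance together with the property $\lambda f(n)\le f(n+1)\le f(n)$, and then passes to metric completions to get the boundary homeomorphism. The only notable difference is one of care rather than of method: you explicitly verify, via the gateway subgraphs $\Gamma_{aH}$, that the subspace Floyd metric on $\Gamma_A$ agrees with the intrinsic Floyd metric of $A$, a point the paper treats as implicit in the identification $\partial_f(A)\simeq\partial_f(\Gamma_{gA}^f)$.
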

\begin{comment}
	
\begin{proof}
	Note that by our choice, $g$ is the shortest possible coset representative of $gA$. Let $d(e_G,g) = m$ and $z \in \Gamma_{gA}^f$. Let $\gamma$ be the $d-$geodesic from $e_G$ to $z$. Then $\gamma$ passes through $z'\in\Gamma_{gH}$ which implies that 
	\[
	d(e_G,z)=d(e_G,z') + d(z,z').
	\] Since for all $x, y \in \Gamma_{gH}$, $d(x,y)=1$, using triangle inequality, we have the following:
	\[
	m+ d(g,z)-2 \leq d(e_G, z) \leq m+ d(g,z)+ 2.
	\] Let $a \in A$, then $d(e_G,a)=d(g,ga)$, which implies the following inequality:
	\[
	m+d(e_G,a) -2 \leq d(e_G, ga) \leq m+d(e_G,a) +2.
	\]
	Let $x,y \in \Gamma_A^f$ and $\gamma_f$ be a path joining them in $\Gamma_A^f$ such that the length of the path in the Floyd metric is $\sum_{i=1}^{n}f(r_i)$. Then the  path $g\gamma_f$ is a path between $gx$ and $gy$ in $\Gamma_{gH}^f$. Then
	\[
	d_f(gx,gy) \leq l_f(g\gamma_f)\leq \sum_{i=1}^n f(m+r_i-2) \leq \lambda^{m-2} \sum_{i=1}^n f(r_i)=\lambda^{m-2}l_f(\gamma_f).
	\] 
	Since $d_f(x,y)=\inf\{l_f(\gamma_f)\hspace{0.15cm}| \hspace{0.15cm} \gamma_f(0)=x, \gamma_f(1)=y, \text{ and } \gamma_f \subset \Gamma_{A}^f\}$, which gives us
	\[d_f(gx,gy)\leq \lambda ^{m-2}d_f(x,y).
	\]
	Similarly, using Remark \ref{len}, we can show that 
	\[
	d_f(x,y) \leq  (1/\lambda^{m+2})d_f(gx,gy).
	\] Hence, we have the desired result.
\end{proof}
\end{comment}
Let $\eta\in\partial T$. Fix the vertex $v_0=(e_G,v_A)$ and let $\{v_n\}$ be the sequence of vertices that lies on the geodesic ray joining $v_0$ to $\eta$ in $T$. Suppose for all $i\geq 0$, $v_i$ and $v_{i+1}$ are connected by the edge $g_{i}\tau$. Choose $x_i\in \Gamma_{g_iH}$. It is straightforward to show that $\{x_i\}$ is a Cauchy sequence in $\Gamma_f$ and any other choice $\{y_i\}$ is Cauchy equivalent to $\{x_i\}.$ We call such a sequence $\{x_i\}$, a {\em Cauchy representative} of $\eta$. 

{\bf Definition of metric $\bar{d_f}$ on $\overline{\Gamma}_f$.} We define a metric $\bar{d_f}$ in the following manner:

\begin{enumerate}
	\item For $x,y\in\Gamma_f$, $\bar{d_f}(x,y):=d_f(x,y)$.
	\item Suppose $x\in \Gamma_f$ and $\xi\in \partial_f(G_v)$ for some vertex $v\in T$. Let $\{x_n\}$ be a Cauchy sequence in $\Gamma_v^f$ representing $\xi$. Then, $\bar{d_f}(x,\xi):=\lim_{n\to\infty}d_f(x,x_n)$. This limit exists as $\{x_n\}$ is Cauchy, and it is independent of the chosen sequence $\{x_n\}$.
	\item Suppose $\xi_1\in\partial_f(G_v)$ and $\xi_2\in\partial_f(G_w)$ for some $v,w\in T$. Let $\{x_n\}$ and $\{y_n\}$ be Cauchy sequences in $\Gamma_v^f$ and $\Gamma_w^f$ representing $\xi_1$ and $\xi_2$, respectively. Then, $\bar{d_f}:=\lim_{n\to\infty}d_f(x_n,y_n)$. Since $\{x_n\}$ and $\{y_n\}$ are Cauchy, this limit exists, and it is independent of chosen sequences.
	\item Let $\eta,\eta_1,\eta_2$ be three distinct points in $\partial T$ and $\xi\in\delta_{Stab}(\Gamma_f)$. Let $\{x_n\},\{y_n\},\{z_n\}$ be Cauchy representatives of $\eta,\eta_1,\eta_2$, respectively, in $\Gamma_f$. The above discussion shows that the following definition does not depend on the chosen sequences.
	\begin{enumerate}
		\item  $\bar{d}_f(\eta,z):= \lim_{n\to\infty} d_f (x_n,z)$, where $z \in \Gamma_f$.
		\item $\bar{d}_f(\eta,\xi ):= \lim_{n\to\infty} d_f(x_n, u_n)$, where $\xi \in \partial_f (G_v)$ for some $v\in T$ and $\{u_n\}$ is Cauchy sequence in $\Gamma_v^f$ representing $\xi$.
		\item $\bar{d}_f (\eta_1, \eta_2):= \lim_{n\to\infty} d_f(y_n,z_n)$.
	\end{enumerate}  
\end{enumerate}
\begin{rem}\label{restriction}
	From the definition of the metric $\bar{d}_f$, we can directly conclude that for each vertex $v\in T$, the restriction of $\bar{d_f}$ on $\overline{\Gamma}_v^f$ is the same as the completion metric of $d_f$ on $\Gamma_v^f$.
\end{rem}
\begin{prop}\label{p1}
	$\overline{\Gamma}_f$ is a metric space.
\end{prop}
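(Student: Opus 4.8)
The plan is to treat $\bar d_f$ uniformly. To every point $p\in\overline\Gamma_f$ I associate a Cauchy sequence $\{p_n\}$ in $(\Gamma_f,d_f)$ representing it: the constant sequence if $p\in\Gamma_f$, a Cauchy sequence in $\Gamma_v^f$ if $p\in\partial_f(G_v)$, and a Cauchy representative if $p\in\partial T$. With this convention every one of the cases (1)--(4) of the definition reads $\bar d_f(p,q)=\lim_n d_f(p_n,q_n)$. First I would record that this limit always exists: since $\{p_n\},\{q_n\}$ are Cauchy, $|d_f(p_n,q_n)-d_f(p_m,q_m)|\le d_f(p_n,p_m)+d_f(q_n,q_m)\to 0$, so $\{d_f(p_n,q_n)\}$ is Cauchy in $\mathbb R$ and converges. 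Independence of the chosen representatives, already asserted in the definition, is the statement that any two sequences representing the same point are $d_f$-asymptotic, which makes the limit formula unambiguous; I would note this before verifying the axioms.

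With the uniform formula in hand, symmetry, non-negativity, and the triangle inequality are inherited from $d_f$ by passing to the limit: for representatives $\{p_n\},\{q_n\},\{r_n\}$ one has $d_f(p_n,r_n)\le d_f(p_n,q_n)+d_f(q_n,r_n)$, and letting $n\to\infty$ gives $\bar d_f(p,r)\le\bar d_f(p,q)+\bar d_f(q,r)$; symmetry and $\bar d_f\ge 0$ are immediate, and $\bar d_f(p,p)=0$ by reusing a single representative. The only substantial point is thus the separation axiom $p\ne q\Rightarrow\bar d_f(p,q)>0$, and here the finiteness of the edge groups is the crucial input. I would argue case by case, ordering the cases so that each reuses the previous ones. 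If $p,q\in\Gamma_f$ this is just that $d_f$ is a metric. If $p\in\Gamma_f$ and $q$ is a boundary point, I use that $\{q_n\}$ does not $d_f$-converge to $p$: if $q\in\partial_f(G_v)$ with $p\in\Gamma_v$, convergence would force $q=p\in\Gamma_v$ by Remark \ref{restriction}, contradicting $q\in\partial_f(G_v)$; otherwise $p$ and the tail of $\{q_n\}$ lie on opposite sides of a single finite separating fibre $\Gamma_{gH}$ over an edge $g\tau$ of $T$, so every path crosses $\Gamma_{gH}$ and $d_f(p,q_n)\ge d_f(p,\Gamma_{gH})>0$.

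For two boundary points the same separating idea does the work. If $\xi_1,\xi_2\in\partial_f(G_v)$ lie in the same vertex space, they are distinct points of the completion of $\Gamma_v^f$, whose metric is the restriction of $\bar d_f$ by Remark \ref{restriction}, hence at positive distance. If $\xi_1\in\partial_f(G_v)$ and $\xi_2\in\partial_f(G_w)$ with $v\ne w$, or if one or both of the points lie in $\partial T$, then since $T$ is a tree the two limiting directions are separated by the fibre $\Gamma_{gH}$ over some fixed edge $g\tau$ on the geodesic (or ray) between them. Because $H$ is finite, $\Gamma_{gH}$ is a finite set, every path from $x_n$ to $y_n$ meets it, and so
$$\bar d_f(\xi_1,\xi_2)=\lim_n d_f(x_n,y_n)\ \ge\ \lim_n \min_{z\in\Gamma_{gH}} d_f(x_n,z)\ =\ \min_{z\in\Gamma_{gH}}\bar d_f(\xi_1,z),$$
where the last equality holds because $\Gamma_{gH}$ is finite and each $d_f(x_n,z)\to\bar d_f(\xi_1,z)$. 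Each $\bar d_f(\xi_1,z)>0$ by the $\Gamma_f$-versus-boundary case already treated, and a minimum over the finite set $\Gamma_{gH}$ stays strictly positive.

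I expect the separation axiom, and specifically the different-vertices and boundary-of-tree subcases, to be the main obstacle. The work there is to pin down the fixed finite separating fibre $\Gamma_{gH}$ and to check that removing it genuinely disconnects $\Gamma_f$ into the side containing $v$ (or the ray's tail) and the side containing $w$; this is exactly where the tree-of-Cayley-graphs description of Subsection \ref{2.3} and the finiteness of $H$ enter. Only once the separating set is finite and independent of $n$ can the minimum-over-a-finite-set step convert the crude lower bound "every path crosses $\Gamma_{gH}$" into a strictly positive lower bound on $\bar d_f(\xi_1,\xi_2)$.
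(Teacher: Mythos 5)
Your proposal is correct and follows essentially the same route as the paper's proof: verify the pseudometric axioms by passing to limits along Cauchy representatives, then establish positivity case by case (same vertex space via Remark \ref{restriction}; different vertex spaces, vertex-versus-$\partial T$, and $\partial T$-versus-$\partial T$ via the separating edge-space fibres $\Gamma_{gH}$ coming from the tree-of-spaces structure, which every connecting path must cross). The only cosmetic difference is in the final positivity step: you get a positive lower bound by taking a minimum over the finitely many points of $\Gamma_{gH}$ and invoking the already-settled point-versus-boundary case, whereas the paper gets it from the positive distance between the disjoint compact subsets $\Gamma_{gH}^f$ and $\partial(\Gamma_v^f)$ of $\overline{\Gamma}_v^f$; you also spell out the graph-point-versus-boundary cases that the paper asserts without proof.
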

\begin{proof}
	From the definition, it follows that $\bar{d}_f$ is a pseudo metric. It remains to check the positivity of $\bar{d}_f$. If $x\in\Gamma_f$ and $\xi\in\delta_{Stab}(\Gamma_f)$ then $\bar{d}_f(x,\xi)>0$. Thus, the following three cases are remaining to consider:
	
	{\bf Case 1.} Suppose $\xi_1\neq\xi_2\in\delta_{Stab}(\Gamma_f)$. If $\xi_1,\xi_2\in \partial_f(G_v)$ for some $v\in T$ then, by Remark \ref{restriction}, $\bar{d}_f(\xi_1,\xi_2)>0$. Suppose $v\neq w\in T$ such that $\xi_1\in \partial_f(G_v)$ and $\xi_2\in\partial_f(G_w)$. Let $e_1=[v,v_1]$ and $e_2=[v_n,w]$ be the first and last edge on the geodesic $[v,w]$ in $T$. Suppose $\Gamma_v^f$ and $\Gamma_{v_1}^f$ are joined through $\Gamma_{g_1H}^f$. Similarly, suppose $\Gamma_{v_n}^f$ and $\Gamma_w^f$ are joined through $\Gamma_{g_nH}^f$. Then, we have the following inequality:
	
	\begin{equation}\label{Imp}
		d_f(x,y)\geq d_f(x, \Gamma_{g_1H}^f)+ d_f(\Gamma_{g_nH}^f,y).
	\end{equation}
	Suppose $\{x_n\},$ $\{y_n\}$ are Cauchy sequences in $\Gamma_v^f$ and $\Gamma_w^f$ representing $\xi_1$ and  $\xi_2$, respectively. Then, putting $x=x_n$ and $y=y_n$ in Equation \ref{Imp} and applying limit on both sides, we get
	\[
	\bar{d}_f(\xi_1,\xi_2)\geq\bar{d}_f(\xi_1, \Gamma_{g_1H}^f)+ \bar{d}_f(\Gamma_{g_nH}^f,\xi_2).
	\] 
	By Remark \ref{restriction} and the fact that $\partial (\Gamma_v^f)$ and $\Gamma_{g_1H}^f$ are compact subset of $\overline{\Gamma}_v^f$, we have
	\[
	\bar{d}_f(\xi_1, g_1) \geq \bar{d}_f(\Gamma_{g_1H}^f, \partial (\Gamma_v^f)) >0.
	\] Hence, we conclude that $\bar{d}_f(\xi_1,\xi_2) >0$.
	
	{\bf Case 2.} Suppose that $\xi \in \delta_{Stab}(\Gamma_f)$, $\eta \in \partial T$. Let $v\in T$ such that $\xi\in\partial_f(G_v)$ and let $w_n$ be a Cauchy representative of $\eta$. Let $e_1$ be the first edge on the geodesic ray $[v,\eta)$. Suppose $\Gamma_v^f$ and $\Gamma_{v_1}^f$ are joined through $\Gamma_{g_1H}^f$. Then, as in Case 1, we get 
	\[
	d_f(x, w_n) \geq d_f(x, \Gamma_{g_1H}^f) \hspace{0.25cm} \text{ for all } x\in \Gamma_v \text{ and sufficiently large } n.
	\] 
	Let $\{x_n\}$ be a Cauchy representative of $\xi$ in $\Gamma_v^f$. Putting $x=x_n$ in the last equation and taking the limit on both sides, we get
	\[
	\bar{d}_f(\xi, \eta) \geq \bar{d}_f(\xi,\Gamma_{g_1H}^f)>0.
	\]

	{\bf Case 3.} Suppose $\eta_1, \eta_2 \in \partial T$ and $\{x_n\}$ and $\{y_n\}$ be their respective Cauchy representatives. Let $\gamma_1$ and $\gamma_2$ be the geodesics from $v_A$ to $\eta_1$ and $\eta_2$, respectively. Then there exists a unique vertex $v_0 \in T$ such that 
	$\gamma_1 \cap \gamma_2 = [v_A, v_0].$ Let $e_1=[v_0,v_1]$ and $e_2=[v_0,v_2]$ be the first edges on $\gamma_1$ and $\gamma_2$ just after $[v_A,v_0]$. Suppose $\Gamma_{v_0}^f$, $\Gamma_{v_1}^f$ are joined through $\Gamma_{g_1H}^f$ and $\Gamma_{v_0}^f$, $\Gamma_{v_2}^f$ are joined through $\Gamma_{g_2H}^f$. Then, for all sufficiently large $n$, each geodesic joining $x_n$ and $y_n$ in $\Gamma_f$ passes through $\Gamma_{g_1H}^f$ and $\Gamma_{g_2H}^f$ which implies that
	$d_f(x_n,y_n) \geq d_f(\Gamma_{g_1H}^f, \Gamma_{g_2H}^f) >0.$ Hence, we conclude that $\bar{d}_f(\eta_1,\eta_2)>0.$
\end{proof}
Now, we are ready to prove the following:
\begin{prop}\label{p2}
	The Floyd boundary $\partial_f(G)$ of $G$ is isometric to $\delta(\Gamma_f)$.
\end{prop}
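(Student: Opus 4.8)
The plan is to recognize $(\overline{\Gamma}_f,\bar{d}_f)$ as \emph{a} Cauchy completion of the Floyd-scaled Cayley graph $(\Gamma_f,d_f)=(\Gamma_{G,S},d_f)$ and then invoke the uniqueness of completions. By part (1) of the definition of $\bar{d}_f$, the inclusion $\Gamma_f\hookrightarrow\overline{\Gamma}_f$ is an isometric embedding, so it remains to check two things: that $\Gamma_f$ is dense in $\overline{\Gamma}_f$, and that $(\overline{\Gamma}_f,\bar{d}_f)$ is complete. Granting these, the abstract completion $\overline{\Gamma}_{G,S}$ of $(\Gamma_f,d_f)$, whose boundary is by definition $\partial_f(G)$, is isometric to $\overline{\Gamma}_f$ via a unique isometry $\Phi$ fixing $\Gamma_f$ pointwise. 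Since $\Phi$ is a bijection fixing $\Gamma_f$, it carries the complement $\partial_f(G)=\overline{\Gamma}_{G,S}\setminus\Gamma_f$ isometrically onto $\delta(\Gamma_f)=\overline{\Gamma}_f\setminus\Gamma_f$, which is exactly the assertion.

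For density I would treat the two types of boundary points separately. If $\xi\in\partial_f(G_v)$ is represented by a Cauchy sequence $\{x_n\}$ in $\Gamma_v^f\subset\Gamma_f$, then part (2) of the definition of $\bar{d}_f$ gives $\bar{d}_f(x_m,\xi)=\lim_{n}d_f(x_m,x_n)$, which tends to $0$ as $m\to\infty$ because $\{x_n\}$ is Cauchy; hence $x_m\to\xi$. If $\eta\in\partial T$ and $\{x_i\}$ is a Cauchy representative of $\eta$, which is Cauchy in $\Gamma_f$ by the shortest-sequence estimate behind Lemma \ref{shortest cauchy}, then part (4)(a) gives $\bar{d}_f(\eta,x_m)=\lim_{n}d_f(x_n,x_m)\to 0$. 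Thus every boundary point is a limit of points of $\Gamma_f$.

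The substantial step is completeness. Using density, a general $\bar{d}_f$-Cauchy sequence can be approximated to within $1/n$ by points $w_n\in\Gamma_f$, and $\{w_n\}$ is then $d_f$-Cauchy; so it suffices to show that every $d_f$-Cauchy sequence $\{w_n\}$ in $\Gamma_f$ converges in $\overline{\Gamma}_f$. Here I would push $\{w_n\}$ down to the tree via $p$, set $v(n)=p(w_n)$, and distinguish two cases according to the behavior of $d_T(v_0,v(n))$. If these tree distances stay bounded, I would argue that $\{w_n\}$ is eventually confined to a single vertex space $\Gamma_v^f$: any excursion into a different branch must cross a separating subgraph $\Gamma_{gH}^f$, and the lower bound in the style of Equation \ref{Imp} shows such an excursion costs a definite amount of Floyd length, incompatible with the Cauchy condition unless it is trapped in a compact overlap. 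Then $\{w_n\}$ converges in $\overline{\Gamma}_v^f$, i.e.\ to a point of $\Gamma_f$ or of $\partial_f(G_v)\subset\delta_{Stab}(\Gamma_f)$, using Remark \ref{restriction} and Lemma \ref{homeo identification}. If instead $d_T(v_0,v(n))\to\infty$, I would show the geodesics $[v_0,v(n)]$ stabilize to a single ray $[v_0,\eta)$ with $\eta\in\partial T$: were the confluence depth of $[v_0,v(m)]$ and $[v_0,v(n)]$ to stay bounded along a subsequence, the two points would be separated by a fixed pair of subgraphs $\Gamma_{g_1H}^f,\Gamma_{g_kH}^f$ of positive mutual Floyd distance, again contradicting $d_f(w_m,w_n)\to 0$. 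The limiting ray defines $\eta$, and the same estimates show $w_n\to\eta$ in $\bar{d}_f$.

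The main obstacle is this completeness argument, and within it the escaping case: one must extract a well-defined end $\eta\in\partial T$ from a Floyd-Cauchy sequence and verify convergence, which rests entirely on the separation estimate that crossing a subgraph $\Gamma_{gH}^f$ contributes a uniformly positive amount of Floyd distance, the very mechanism already used to prove positivity in Proposition \ref{p1}. Once completeness is in hand, the isometry claim is automatic from the uniqueness of Cauchy completions, since the metric $\bar{d}_f$ on $\delta(\Gamma_f)$ was defined precisely as the limit of $d_f$ along representing sequences, matching the completion metric on $\partial_f(G)$.
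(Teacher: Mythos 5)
Your overall frame---identify $(\overline{\Gamma}_f,\bar d_f)$ as a completion of $(\Gamma_f,d_f)$ via an isometric embedding, check density, check completeness, then invoke uniqueness of completions---is sound, and your density argument is correct. The genuine gap is in the completeness step: the dichotomy you propose for an arbitrary $d_f$-Cauchy sequence of vertices is false in both of its cases. For the bounded case, take a Cauchy sequence $\{a_n\}$ in $\Gamma_{A}^f$ converging to some $\xi\in\partial_f(A)$ with the cosets $a_nH$ pairwise distinct, fix $b\in B\setminus i_B(H)$, and set $w_n:=a_nb$. Since the edge from $a_n$ to $a_nb$ has Floyd length at most $f(d(e_G,a_n)-1)\to 0$, the sequence $\{w_n\}$ is $d_f$-Cauchy and converges to $\xi$; its tree projection stays at distance $1$ from $v_0$, yet it visits infinitely many distinct vertex spaces $\Gamma_{(a_n,v_B)}^f$ and is never eventually confined to a single one (nor trapped in any finite overlap $\Gamma_{gH}$). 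For the unbounded case, append to each $a_n$ a word $u_n$ of length $n$ with no cancellation, so that $w_n:=a_nu_n$ satisfies $d_f(a_n,w_n)\leq\sum_{j\geq d(e_G,a_n)}f(j)\to 0$. Then $\{w_n\}$ is again Cauchy with limit $\xi\in\delta_{Stab}(\Gamma_f)$, but $d_T(v_0,p(w_n))\to\infty$ and the geodesics $[v_0,p(w_n)]$ branch off $v_0$ along the pairwise distinct first edges $a_n\tau$, so they do not stabilize to any ray and the limit is not a point of $\partial T$. In both cases the obstruction is the same: the separating subgraphs $\Gamma_{gH}^f$ that your argument relies on are not fixed but vary with $m,n$, and their Floyd-separation constant $\min_{z\in\Gamma_{gH}}f(d(e_G,z))$ tends to $0$ as the cosets escape to infinity, so the Cauchy condition imposes no contradiction.

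This is exactly the difficulty the paper's proof is engineered to avoid: it never analyzes arbitrary Cauchy sequences, only \emph{shortest} sequences (Lemma \ref{shortest cauchy}), i.e.\ word-geodesic rays from $e_G$, whose tree projections move monotonically away from $v_A$ and for which your dichotomy genuinely holds (either the projection stabilizes at a vertex $v_N$, and then $w_n\in\Gamma_{v_N}^f$ eventually, or it follows a single ray to a point of $\partial T$). The reduction to this special class is mediated by the Floyd completion itself: every point of $\partial_f(G)$ is the limit of a shortest sequence, so the isometric embedding $\bar i$ of the (already complete) space $\Gamma_f\cup\partial_f(G)$ into the completion of $\Gamma_f\cup\delta(\Gamma_f)$ sends $\partial_f(G)$ into $\delta(\Gamma_f)$; since the image of $\bar i$ is the closure of $i(\Gamma_f)$ and $\Gamma_f$ is dense in $\Gamma_f\cup\delta(\Gamma_f)$, one concludes simultaneously that $\Gamma_f\cup\delta(\Gamma_f)$ is complete and that $\bar i$ is onto. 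To repair your proof you would either need this reduction to shortest sequences, or a substantially finer analysis of how a Cauchy sequence's tree projections interact with the boundaries $\partial_f(G_v)$; as written, the completeness argument fails.
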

\begin{proof}
	Recall that for every $z \in \Gamma$, $p(z)$ has two possible. For this proof, we will choose the value closest to $v_A$.

	Let $i: (\Gamma_f,d_f) \rightarrow (\overline{\Gamma}_f,\bar{d}_f)$ be the natural inclusion map, which is an isometric embedding. Hence it extends to an isometric embedding $\bar{i}:\Gamma_f\cup\partial_f(G)\to\overline{\Gamma_f\cup\delta(\Gamma_f)}$, where $\overline{\Gamma_f\cup\delta(\Gamma_f)}$ denotes the Cauchy completion of $\Gamma_f\cup\delta(\Gamma_f)$. We show that $\bar{i}$ is in fact an isometry. Note that $\bar{i}(\Gamma_f \cup \partial_f (G)) = \overline{i(\Gamma_f)}$ (which is nothing but the Cauchy completion of $\Gamma_f$).
	Let $\{w_n\} \in \Gamma$ be a shortest sequence. Then, by Lemma \ref{shortest cauchy}, $\{w_n\}$ is a Cauchy sequence in $(\Gamma_f,d_f)$. Also, the distance between 
	$v_A$ and $p(w_n)$ is non-decreasing. Hence, we have the following two cases:
	
	{\bf Case 1.} There exists $N \in \mathbb{N}$ such that for all sufficiently large n, the distance in $T$ between $v_A$ and $p(w_n)$ is $N$. Since $\{w_n\}$ is a shortest sequence in $\Gamma$, there exists a vertex $v_N\in T$ such that $d_T(v_A,v_N)=N$ and $w_n \in \Gamma_{v_{N}}^f$ for all large $n$. Hence, $\{w_n\}$ converges to a point $\xi \in \partial_f(\Gamma_{v_N}^f)\subset\delta(\Gamma_f)$. 
	
	{\bf Case 2.} The distance between $v_A$ and $p(w_n)$ goes to $\infty$ as $n\rightarrow \infty.$ Since $\{w_n\}$ is shortest, there exists a geodesic ray, say, $\gamma$ in $T$ such that $\gamma(0)=v_A$ and $p(w_n)\in \gamma$. Let $\eta\in\partial T$ be the point represented by $\gamma$. Then, a subsequence $\{w_{n_k}\}$ becomes a Cauchy representative of $\eta$. Hence, $\{w_n\}$ converges to $\eta$ in $(\overline{\Gamma}_f,\bar{d}_f)$. 
	
	Let $z \in \partial_f (G)$. Then there exists a shortest sequence $\{w_n\} \in \Gamma_f$ such that $w_n \rightarrow z$ in $\Gamma_f \cup \partial_f (G)$. From the above two cases, we know that there exists $z' \in \delta(\Gamma_f)$ such that $w_n \rightarrow z'$ in $\Gamma_f \cup \delta (\Gamma_f)$. Hence, by continuity of $\bar{i}$, we have that $\bar{i}(z)=z'$. This implies that $\overline{i}(\partial_f (G)) \subset \delta (\Gamma_f)$. Hence, we conclude that 
	$\overline{i(\Gamma_f)}= \bar{i}(\Gamma_f\cup\partial_f(G)) \subset \Gamma_f \cup \delta \Gamma_f.$ Since, by construction, $i(\Gamma_f)$ is dense in $\Gamma_f \cup \delta \Gamma_f$, we have that $\overline{\Gamma_f \cup \delta \Gamma_f} =\overline{i(\Gamma_f)} \subset  \Gamma_f \cup \delta \Gamma_f$. This in turn implies that $\overline{\Gamma_f \cup \delta \Gamma_f}=\Gamma_f \cup \delta \Gamma_f=\overline{i(\Gamma_f)}$. Hence, $\bar{i}$ is an isometry between $\Gamma_f \cup \partial_f G$ and $\Gamma_f \cup \delta (\Gamma_f),$ that gives us the desired result.
\end{proof}

\subsection{Equivalence of topologies on $\overline{\Gamma}_f$} Let $\mathcal T_o$ be the topology defined by the neighborhood basis on $\overline{\Gamma}_f$ and let $\mathcal T_m$ be the topology on $\overline{\Gamma}_f$ induced by the metric $\bar{d}_f$. In this subsection, we show that $\mathcal T_o$ is the same as $\mathcal T_m$.
\begin{prop}\label{eq1}
	$\mathcal T_o=\mathcal{T}_m$.
\end{prop}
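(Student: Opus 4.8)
The plan is to prove $\mathcal{T}_o=\mathcal{T}_m$ by checking, at every point of $\overline{\Gamma}_f$, that the $\mathcal{T}_o$-neighborhood basis and the system of $\bar d_f$-balls are mutually cofinal: each basic $\mathcal{T}_o$-neighborhood contains a $\bar d_f$-ball about its centre, and each $\bar d_f$-ball contains a basic $\mathcal{T}_o$-neighborhood. For an interior point $x\in\Gamma_f$ this is automatic, since the declared $\mathcal{T}_o$-neighborhoods of $x$ are the $d_f=\bar d_f$ balls inside $\Gamma_f$, and a sufficiently small $\bar d_f$-ball about $x$ is contained in $\Gamma_f$ (the boundary sits at infinite word distance from $e_G$). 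So all the content lies at the boundary points $\xi\in\delta_{Stab}(\Gamma_f)$ and $\eta\in\partial T$, and I would run two recurring estimates. Writing $\theta(m):=\sum_{r\ge m}f(r)$, note $\theta(m)\to 0$ by summability. \textbf{Engine 1 (diameter).} If $z$ lies in the branch of $T$ beyond an edge $g\tau$ (i.e.\ $p(z)$ is separated from $v_0$ by $g\tau$), then every such point has word distance $\ge d(e_G,g)-1$ from $e_G$, so an inward $d$-geodesic to $\Gamma_{gH}$ has Floyd length $\le\theta(d(e_G,g)-1)$; hence the whole branch together with $\Gamma_{gH}^f$ has $\bar d_f$-diameter $\le 2\theta(d(e_G,g)-1)$. \textbf{Engine 2 (separation).} This is the inequality already used in Proposition \ref{p1}: if $\Gamma_{gH}^f$ separates $p(x)$ from $p(y)$ in $T$ then $\bar d_f(x,y)\ge \bar d_f(x,\Gamma_{gH}^f)+\bar d_f(\Gamma_{gH}^f,y)$, each summand being strictly positive whenever the corresponding boundary point does not meet that coset (Remark \ref{restriction} plus compactness of $\Gamma_{gH}^f$).

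\textbf{Stabilizer boundary.} Fix $\xi\in\partial_f(G_v)$. For $V_U(\xi)\subseteq B(\xi,\epsilon)$: since $\xi$ is a boundary point and $\Gamma_v$ is locally finite, any sufficiently small $U$ contains only vertices $g$ with $d(e_G,g)$ large (otherwise $\xi$ would be a limit of finitely many vertices, forcing $\xi$ to be a vertex); shrinking $U$ inside $B_{\overline{\Gamma}_v^f}(\xi,\epsilon/2)$ controls the part $U$ by Remark \ref{restriction}, while Engine 1 gives $\bar d_f(\xi,z)\le \bar d_f(\xi,g)+2\theta(d(e_G,g)-1)<\epsilon$ for every $z\in V_U$. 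Conversely, given $U$, pick $\rho>0$ with $B_{\overline{\Gamma}_v^f}(\xi,\rho)\subseteq U$; if $z\notin\overline{\Gamma}_v^f$ has first edge $g\tau$ with $g\notin U$, Engine 2 gives $\bar d_f(\xi,z)\ge\bar d_f(\xi,\Gamma_{gH}^f)$, which is bounded below by a positive constant uniformly in such $g$ (for $d(e_G,g)$ large, $\bar d_f(\xi,\Gamma_{gH}^f)\ge\rho-\diam_{\bar d_f}\Gamma_{gH}^f\ge\rho/2$; for $d(e_G,g)$ bounded there are only finitely many $g$, each at positive distance). This yields a metric ball inside $V_U(\xi)$.

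\textbf{Tree boundary.} Fix $\eta\in\partial T$ and let $g_i\tau$ be the $i$-th edge of $[v_0,\eta)$. For $V_n(\eta)\subseteq B(\eta,\epsilon)$: every point with $p(\cdot)\in T_n(\eta)$ lies in the branch beyond $g_n\tau$, as does a tail of any Cauchy representative of $\eta$, so Engine 1 bounds $\bar d_f(\eta,z)\le 2\theta(d(e_G,g_n)-1)$, which is $<\epsilon$ once $n$ is large (because $d(e_G,g_n)\to\infty$). Conversely, fix $n$; if $p(z)\notin\overline{T_n(\eta)}\setminus\{u_n(\eta)\}$, then either $p(z)=u_n(\eta)$ or $[v_0,p(z)]$ diverges from $[v_0,\eta)$ at some $u_j$ with $j<n$, so one of the finitely many cosets $\Gamma_{g_1H}^f,\dots,\Gamma_{g_{n+1}H}^f$ separates $p(z)$ from $\eta$; Engine 2 then gives $\bar d_f(\eta,z)\ge\min_{1\le i\le n+1}\bar d_f(\eta,\Gamma_{g_iH}^f)>0$, and taking $\epsilon$ below this minimum gives $B(\eta,\epsilon)\subseteq V_n(\eta)$.

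\textbf{Main obstacle.} The easy direction is $V\subseteq B$, driven solely by the vanishing Floyd tail $\theta$. The delicate direction is $B\subseteq V$, where the crux is producing \emph{uniform} positive lower bounds across the separating cosets; this is exactly where the two finiteness observations do the work, namely that a shrinking metric neighborhood of a boundary point captures only vertices far from $e_G$, and that a fixed basic $\mathcal{T}_o$-neighborhood involves only finitely many separating cosets. I would flag that, if desired, the $V\subseteq B$ direction can be bypassed entirely: once $B\subseteq V$ is established (equivalently, continuity of $\mathrm{id}\colon(\overline{\Gamma}_f,\bar d_f)\to(\overline{\Gamma}_f,\mathcal{T}_o)$), Proposition \ref{p2} makes $(\overline{\Gamma}_f,\bar d_f)$ compact and Engine 2 makes $\mathcal{T}_o$ Hausdorff, so the continuous bijection between them is automatically a homeomorphism, giving $\mathcal{T}_o=\mathcal{T}_m$.
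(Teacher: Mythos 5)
Your proposal is correct, and its skeleton is the same as the paper's: one inclusion is driven by the summable Floyd tail (your Engine 1, the paper's inequality from the proof of Proposition \ref{p1} combined with the bound $\bar{d}_f(e,z)\leq\sum_i f(i)$), the other by the coset-separation inequality (your Engine 2). The substantive difference is in the direction you call delicate, where your write-up is actually more complete than the paper's. For $\xi\in\delta_{Stab}(\Gamma_f)$ the paper asserts $B_r(\xi)\subset V_U(\xi)$ ``from the definition of neighborhoods'' with $r$ chosen only so that the $\overline{\Gamma}_v^f$-ball of radius $r$ lies in $U$; this needs exactly the correction you supply, since a point $z$ beyond a coset $\Gamma_{gH}^f$ with $g\notin U$ is a priori only at distance $\geq r-\diam_{\bar{d}_f}\Gamma_{gH}^f$ from $\xi$, so one must split into far cosets (where the coset diameter is at most $r/2$, say) and the finitely many near ones. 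For $\eta\in\partial T$ the paper chooses $t$ with $\sum_{j\geq M}f(j)<t/3$ and proves $V_M(\eta)\subset B_t(\eta)$, which is the inclusion relevant to the \emph{other} direction; your argument, producing a ball of radius below $\min_{1\leq i\leq n+1}\bar{d}_f(\eta,\Gamma_{g_iH}^f)>0$ inside $V_n(\eta)$, is what that direction actually requires. Finally, your closing observation is a genuine shortcut the paper does not exploit: since $(\overline{\Gamma}_f,\bar{d}_f)$ is compact (Proposition \ref{p2}) and $\mathcal{T}_o$ is Hausdorff, continuity of the identity in the one delicate direction already forces equality of the topologies, so the tail-estimate direction could be omitted altogether.
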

\begin{proof}
	{\bf $\mathcal T_o$ is finer than $\mathcal T_m$:} Let $\xi\in\delta_{Stab}(\Gamma_f)$. Let $v\in T$ represents coset $g_0A$ with $g_0$ being a reduced representative such that $\xi\in\partial_f(G_v)$. Let $B_r$ be a $r$-radius ball about $\xi$ in $(\overline{\Gamma}_f, \bar{d}_f)$. We show that there exists a neighborhood $U$ of $\xi$ in $\overline{\Gamma}_v^f$ such that $V_U(\xi)\subset B_r(\xi)$. Choose $N \in \mathbb{N}$ such that 
	$\sum_{j=N}^\infty f(j) < \frac{r}{3}$ and $d(e_G, \Gamma_{g_0A})< N-1$. Let 
	$$W:= \{x \in \Gamma \hspace{0.15cm} | \hspace{0.15cm} d(x, e_G) \leq N\}.$$ Note that $W$ is a compact subset of $(\overline{\Gamma}_f,\bar{d}_f) $.
	Define $U:= (B_{r/3}-W) \cap \Gamma_v^f$. It is evident that $U$ is an open neighborhood of $\xi$ in $\overline{\Gamma}_v^f.$ 
	Let $z $ be an element in $ V_{U}(\xi)$ and let $e_1=[v,v_1]$ be the first edge either on the geodesic $[v,p(z)]$ or on the geodesic ray $[v,p(z))$. 
	Assume that $\Gamma_v^f$ and $\Gamma_{v_1}^f$ are joined through $\Gamma_{gH}^f$. 
	Since every path in $\Gamma_f$ joining $e_G$ and $z$ passes through $\Gamma_{gH}^f$, we have
	\begin{equation}\label{Imp1}
		\bar{d}_f(e,\Gamma_{gH}^f)+\bar{d}_f(\Gamma_{gH}^f, z)	\leq	\bar{d}_f(e,z). 
	\end{equation}
	Since a representative $g_1$ (not necessarily reduced) of $gH$ is contained in $U$, we get $d(e_G,\Gamma_{gH})\geq N$. Thus, we have
	$\bar{d}_f(\Gamma_{gH}^f,e) \geq \sum_{i=0}^{N-1} f(i), $ and $  \bar{d}_f(e,z) \leq \sum_{i=0}^\infty f(i).$
	By using inequality \ref{Imp1}, we get
	$\bar{d}_f (\Gamma_{gH}^f,z) \leq \sum_{i=N}^\infty f(i)< \frac{r}{3}.$
	By using triangle inequality, we get
	\[
	\bar{d}_f(\xi, z) \leq \bar{d}_f(\xi, \Gamma_{gH}^f) + \bar{d}_f(\Gamma_{gH}^f,z) + f(N) <  \frac{r}{3}+\frac{r}{3}+\frac{r}{3}=r.\]
	Hence, we conclude that $B_r \supset V_U(\xi).$
	
	Now, let $\eta \in \partial T$ and let $U_{s}$ be an open ball of radius $s\geq 0$ about $\eta$ in $(\overline{\Gamma}_f, \bar{d}_f)$. Choose $N$ such that $\Sigma_{j=N}^{\infty}f(j)<\frac{s}{3}$. We have a unique sequence $v_i$ in $T$, with $v_0=v_A$, which along the unique geodesic converges to $\eta$. Let $g_i\tau$ be the edge between $v_i$ and $v_{i+1}$ with $g_i$ being the reduced representative of the coset associated to vertex $v_{i+1}$. Then $\{g_i\}$ is a Cauchy representative of $\eta$. Thus, $\{g_i\}$ converges to $\eta$. Choose $N_0 >N$ such that $
	\bar{d}_f(\Gamma_{g_{N_0}H}^f,\eta) < \frac{s}{3}.$  It is easy to see that 
	$d(\Gamma_{g_{N_0}H}, e_G) \geq N_0.$ For $z \in V_{N_0}(\eta)$, every path between $e_G$ and $z$ passes through $\Gamma_{g_{N_0}H}$.
	Hence, by using inequality \ref{Imp1} and triangle inequality, we get
	\[
	\bar{d}_f(z, \eta) \leq \bar{d}_f(z,\Gamma_{g_{N_0}H}^f)+  \bar{d}_f(\eta,\Gamma_{g_{N_0}H}^f) + f(N_0) <  \frac{s}{3}+ \frac{s}{3} + \frac{s}{3} =r. \]
	Hence $V_{N_0}(\eta)\subset U_s$.
	
	{\bf $\mathcal T_m$ is finer than $\mathcal T_o$:} Let $\xi\in\partial_{f}(\Gamma_v^f)\subset \delta_{Stab}({\Gamma_f})$ for some vertex $v\in T$. Let $U$ be a neighborhood of $\xi$ in $\overline{\Gamma}_v^f$ such that $V_U(\xi)$ is a neighborhood of $\xi$ in $\overline{\Gamma}_f$. Since $U$ is open in $\overline{\Gamma}_v^f$, there exists $r\geq 0$ such that the $\overline{\Gamma}_v^f$-ball of radius $r$ contained in $U$. Let $B_r(\xi)$ is the $r$-radius ball in $(\overline{\Gamma}_f,\bar{d}_f)$. Then, it follows from the definition of neighborhoods that $B_r(\xi)\subset V_U(\xi)$. Let $\eta\in \partial T$ and let $V_M(\eta)$ be a neighborhood in $\overline{\Gamma}_f$. Choose $t\geq 0$ such that $\Sigma_{j=M}^{\infty}f(j)<\frac{t}{3}$. Let $B_t(\eta)$ is ball of radius $t$ in $(\overline{\Gamma}_f,\bar{d}_f)$. Then, again, it is easy to see that $V_M(\eta)\subset B_t(\eta)$. 
\end{proof}
\subsection{HNN extension case}\label{hnn construnction}
In this subsection, we retain the notations of Subsection \ref{phnn}. Let $f$ be a Floyd function. We will denote the scaled Cayley graph of $G$ with respect to the generating set $S$ by $\Gamma_f$ with the Floyd metric $d_f$. For the sake of ease, we will denote the metric subspaces $(\Gamma_{gA},d_f)$ by $\Gamma_{gA}^f$. Similarly, when $v \in T$, we will denote the metric subspace $(\Gamma_v, d_f)$ by $\Gamma_v^f$.  Define $\partial_f(\Gamma_v):= \overline{\Gamma}_v^f- \Gamma_v^f$, where $\overline{\Gamma}_v^f$ is the metric completion of $\Gamma_v^f$.
\vspace{.2cm}

{\bf Boundaries of the stabilizers.} Let $\delta_{Stab}(\Gamma_f)$ be the set $G\times\partial_{f}(A)$ divided by the equivalence relation induced by
$$(g_1,\xi_1)\sim (g_2,\xi_2) \text{ if } \xi_1,\xi_2\in \partial_{f}A, \hspace{0.07cm}g_2^{-1}g_1\in A \text{ and } g_2^{-1}g_1\xi_1=\xi_2.$$

The equivalence class of an element $(g,\xi)$ is denoted by $[g,\xi]$. The set $\delta_{Stab}(\Gamma_f)$ comes with a natural action of $G$ on the left. This also comes with a natural projection onto the set of vertices of $T$. A result similar to Lemma \ref{homeo identification} holds for $G$, which implies that $\partial_f (A)$ is homeomorphic to $\partial_f(\Gamma_{gA}^f)$ for all $g\in G$. Hence, for every vertex $v \in T$, $\partial_f(G_v)$ can be identified with $\partial_f(\Gamma_v^f)$.

Let $\partial T$ denotes the Gromov boundary of $T$. Then, as a set, we define the {\em boundary} of $\Gamma$ as $$\delta(\Gamma_f):=\delta_{Stab}(\Gamma_f)\sqcup\partial T.$$

Also, as a set, we define the {\em compactification} of $\Gamma_f$ as $$\overline{\Gamma}_f:=\Gamma_f\cup\delta(\Gamma_f).$$
This set comes with a natural action of $G$ and with a natural map $p:\overline{\Gamma}_f\to \overline{T}$, which in this case is well-defined. The preimage of a vertex $v\in T$ is $\Gamma_v^f\cup\partial_{f}(G_v)$ that is identified, as a set, with $\overline{\Gamma}_v^f$ which is the metric completion of $\Gamma_v^f$.
As in Subsection \ref{amalgam topo}, we can similarly define topology $\mathcal{T}_o$ and metric $\bar{d}_f$ on $\Gamma_f$. Following the steps employed in previous subsections, one can easily show that $\overline{\Gamma}_f$ with the  topology $\mathcal{T}_o$ and the topology induced by the metric $\bar{d}_f$ are homeomorphic as well as $\delta(\Gamma_f)$ with the metric $\bar{d}_f$ is isometric to $\partial_f(G)$. Hence, $\delta(\Gamma_f)$ with the subspace topology induced by $\mathcal{T}_o$ is homeomorphic to $\partial_f(G)$.
\section{Homeomorphism types of Floyd boundaries of free products}\label{4}
The main goal of this section is to prove the following theorem. In this section, assume all the groups to be infinite unless mentioned otherwise.

\begin{theorem}\label{homeo free product}
	Suppose $G_1=A_1\ast B_1$ and $G_2=A_2\ast B_2$ are two free products of infinite groups. Let $f_1$ and $f_2$ are two Floyd functions such that $\partial_{f_1}(A_1)\simeq\partial_{f_2}(A_2)$ and $\partial_{f_1}(B_1)\simeq \partial_{f_2}(B_2)$ then there is a homeomorphism from $\partial_{f_1}(G_1)$ to $\partial_{f_2}(G_2)$.
\end{theorem}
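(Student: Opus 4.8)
The plan is to transport everything to the combinatorial boundary model $\delta(\Gamma_f)=\delta_{Stab}(\Gamma_f)\sqcup\partial T$. By Proposition \ref{p2} and Proposition \ref{eq1}, for $i=1,2$ the Floyd boundary $\partial_{f_i}(G_i)$ is homeomorphic to $\delta(\Gamma_{f_i})$ with the neighborhood topology $\mathcal T_o$, so it suffices to produce a homeomorphism $F:\delta(\Gamma_{f_1})\to\delta(\Gamma_{f_2})$. Since $A_i,B_i$ are finitely generated and infinite, they are countable, so in each Bass--Serre tree $T_i$ every $A_i$-vertex and every $B_i$-vertex has countably infinite valence; hence $T_1$ and $T_2$ are isomorphic via a type-preserving bipartite isomorphism $\Phi:T_1\to T_2$ (sending $A_1$-vertices to $A_2$-vertices and $B_1$-vertices to $B_2$-vertices). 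Such a $\Phi$ induces a homeomorphism $\partial\Phi:\partial T_1\to\partial T_2$, and for each vertex $v$ the hypothesis together with Lemma \ref{homeo identification} furnishes a homeomorphism $h_v:\partial_{f_1}(G_v)\to\partial_{f_2}(G_{\Phi(v)})$.

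First I would assemble $F$ as $\partial\Phi$ on $\partial T_1$ and as $\bigsqcup_v h_v$ on $\delta_{Stab}(\Gamma_{f_1})$, so that $F$ respects the vertex projection, $p_2\circ F=\Phi\circ p_1$. Continuity at a point $\eta\in\partial T_1$ is then essentially formal, since the basic neighborhoods $V_n(\eta)=p^{-1}(\overline{T_n(\eta)}\setminus\{u_n(\eta)\})$ depend only on the tree and $F$ carries $V_n(\eta)$ onto $V_n(\partial\Phi(\eta))$. Because all four spaces are compact metrizable, it then suffices to check that $F$ is a continuous bijection and invoke compactness to upgrade it to a homeomorphism.

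The real work is continuity at a point $\xi\in\partial_{f_1}(G_v)$. Here a basic neighborhood $V_U(\xi)$ is governed not only by a neighborhood $U$ of $\xi$ in $\overline{\Gamma}_v^{f_1}$ but by which \emph{directions} $g\tau$ emanating from $v$ have initial vertex $g$ lying in $U$; each such direction carries an entire branch of $\delta(\Gamma_{f_1})$. A short estimate using inequality \ref{Imp1} shows that $z_n\to\xi$ in $\delta(\Gamma_{f_1})$ forces the directions $g_n$ of the branches containing $z_n$ to converge to $\xi$ among the vertices of $\Gamma_v^{f_1}$, and conversely. Thus $F$ is continuous at $\xi$ exactly when the direction-bijection induced by $\Phi$ at $v$, together with $h_v$, extends to a homeomorphism of the subspaces $\{\text{vertices}\}\cup\partial_f(G_v)$ of the two completions. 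By Remark \ref{restriction} these are compactifications of a countable discrete set (the group elements, which one checks are isolated in this subspace) whose remainder is the vertex boundary, so the question is whether a homeomorphism of remainders extends to one of the compactifications.

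The main obstacle, therefore, is to choose $\Phi$ (equivalently its edge-bijections) coherently with the homeomorphisms $h_v$, rather than independently. I would resolve this with a back-and-forth lemma: if $K_1\cong K_2$ are compact metric spaces and $D_i$ is a countable discrete set accumulating onto $K_i$ in such a way that every nonempty relatively open subset of $K_i$ is the set of limits of infinitely many points of $D_i$, then any homeomorphism $K_1\to K_2$ extends to a homeomorphism of the compactifications $D_i\cup K_i$. This hypothesis holds for Floyd completions because every neighborhood of every boundary point contains infinitely many group elements, produced by a shortest sequence as in Lemma \ref{shortest cauchy}. Applying the lemma at each vertex, and organizing the choices level by level along $T_1$ so that the direction-bijections at adjacent vertices stay consistent, yields a coherent matching; feeding this back into the neighborhood estimate gives continuity of $F$ at every $\delta_{Stab}$ point and, by the compactness argument above, the desired homeomorphism. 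The delicate part is precisely this simultaneous, tree-compatible construction of the $h_v$ and the edge-bijections; note in particular that one must \emph{not} ask the full completions $\overline{\Gamma}_v^{f_i}$ to be homeomorphic, since their $1$-skeleta can differ, only the discrete-plus-boundary subspaces.
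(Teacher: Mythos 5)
Your proposal is correct and follows essentially the same route as the paper: the reduction to the model $\delta(\Gamma_{f_i})$ via Propositions \ref{p2} and \ref{eq1}, your back-and-forth extension lemma (which is precisely the paper's Lemma \ref{main lemma} and Corollary \ref{topo1}), and the continuity-plus-compactness finish all match the paper's argument. The one simplification the paper makes, which dissolves the ``delicate'' tree-compatible matching you describe, is to run the extension lemma only twice --- obtaining bijections $\alpha:A_1\to A_2$ and $\beta:B_1\to B_2$ with $\alpha(1)=1$ and $\beta(1)=1$ --- and then to define both the tree isomorphism $\iota(a_1b_1\dots a_nb_n\tau_1)=\alpha(a_1)\beta(b_1)\dots\alpha(a_n)\beta(b_n)\tau_2$ and the boundary map $F$ globally via normal forms, so the coherence between the edge-bijections and the vertex homeomorphisms holds automatically by construction rather than through a level-by-level induction along $T_1$.
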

An easy application of induction gives the following:
\begin{cor}\label{gen free product}
	For $k\geq2$, suppose $G_1=A_1\ast\dots\ast A_k$ and $G_2=B_1\ast\dots\ast B_k$ are two free products of infinite groups. Let $f_1$ and $f_2$ are two Floyd functions such that $\partial_{f_1}(A_i)\simeq\partial_{f_2}(B_i)$ for $1\leq i\leq k$. Then, $\partial_{f_1}(G_1)\simeq\partial_{f_2}(G_2)$. \qed
\end{cor}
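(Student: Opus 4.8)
The plan is to reduce the stated corollary to its two–factor base case, Theorem \ref{homeo free product}, and to spend the real effort on that base case. For the reduction I would induct on $k$. The case $k=2$ is Theorem \ref{homeo free product}. For $k>2$, regroup $G_1=A_1\ast(A_2\ast\cdots\ast A_k)$ and $G_2=B_1\ast(B_2\ast\cdots\ast B_k)$, and set $A'=A_2\ast\cdots\ast A_k$, $B'=B_2\ast\cdots\ast B_k$. These truncated products are again finitely generated and infinite, so the inductive hypothesis (with the same Floyd functions $f_1,f_2$) gives $\partial_{f_1}(A')\simeq\partial_{f_2}(B')$; together with $\partial_{f_1}(A_1)\simeq\partial_{f_2}(B_1)$, Theorem \ref{homeo free product} applied to $A_1\ast A'$ and $B_1\ast B'$ yields $\partial_{f_1}(G_1)\simeq\partial_{f_2}(G_2)$. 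The only things needing a line are that each truncated factor is infinite and that the associativity of the free product is compatible with the Bass--Serre/Floyd model of Section \ref{3}.

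For Theorem \ref{homeo free product} I would work entirely inside that model, specialized to trivial amalgamating subgroup $H=\{1\}$, so that by Proposition \ref{p2} each $\partial_{f_i}(G_i)$ is identified with $\delta(\Gamma_{f_i})=\delta_{Stab}(\Gamma_{f_i})\sqcup\partial T_i$. First I would observe that both Bass--Serre trees $T_1,T_2$ are bipartite trees in which every $A$-vertex and every $B$-vertex has countably infinite valence, since the factors are infinite and finitely generated, hence countable; so there is a bipartite, root-preserving isomorphism $\Psi\colon T_1\to T_2$ taking $A$-vertices to $A$-vertices. Fixing homeomorphisms $h_A\colon\partial_{f_1}(A_1)\to\partial_{f_2}(A_2)$ and $h_B\colon\partial_{f_1}(B_1)\to\partial_{f_2}(B_2)$ and transporting them by group translations via Lemma \ref{homeo identification}, I obtain homeomorphisms $\partial_{f_1}(G_v)\to\partial_{f_2}(G_{\Psi(v)})$ for every vertex $v$; these, together with the boundary map $\partial T_1\to\partial T_2$ induced by $\Psi$, assemble into a bijection $\Phi\colon\delta(\Gamma_{f_1})\to\delta(\Gamma_{f_2})$. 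It then remains to check that $\Phi$ and $\Phi^{-1}$ are continuous against the neighborhood basis $\mathcal T_o$ of Subsection \ref{amalgam topo}: each basic neighborhood $V_U(\xi)$ of a stabilizer boundary point and each $V_n(\eta)$ of a tree-boundary point must be carried inside a corresponding basic neighborhood of its image.

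The main obstacle is continuity at the stabilizer boundary points. A neighborhood $V_U(\xi)$ of $\xi\in\partial_{f_1}(G_v)$ contains not just the nearby boundary points of $G_v$ but the \emph{entire} subtree-boundaries hanging off those edges of $\Gamma_v$ that are Floyd-close to $\xi$. Hence continuity forces $\Psi$ to be compatible with $h_A,h_B$: as a sequence of edges at $v$ accumulates, in the Floyd metric of $\Gamma_v^f$, onto $\xi$, its images must accumulate onto $h_A(\xi)$. In other words the edge-bijection at each vertex must extend the boundary homeomorphism "up to the boundary," and this cannot be arranged by choosing $\Psi$ and $h$ independently. I would therefore build the edge-bijection at each vertex \emph{together with} $\Phi$, by a back-and-forth exhaustion of the countable edge sets: at each finite stage an unmatched edge of $T_1$ is paired with an unused edge of $T_2$ lying in the shrinking target neighborhood prescribed by $h_A$ (resp. $h_B$).

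What makes this back-and-forth succeed is the summability of $f$ together with the transitivity of the action: all but finitely many edges at $v$ lie in any prescribed Floyd neighborhood of $\xi$, and the subtree-boundary hanging off an edge has Floyd diameter tending to $0$ as the edge recedes, so deep branches automatically land in small target neighborhoods. Moreover the vertex stabilizer $G_v$ acts transitively by homeomorphisms on the edges incident to $v$, so all subtree-boundaries hanging off $v$ are mutually homeomorphic — and the recursion closes because the homeomorphism type of such a half-tree boundary is a fixed point matched by the construction itself. This interchangeability is exactly the freedom needed to complete each stage of the exhaustion, and running the symmetric construction for $\Phi^{-1}$ upgrades the continuous bijection to a homeomorphism. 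I expect the bookkeeping of this simultaneous recursion — guaranteeing the limit map is well defined, bijective, and continuous in both directions at every level of $T_i$ at once — to be the genuinely delicate part, with everything else being routine once the neighborhood bases are unwound.
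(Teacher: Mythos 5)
Your reduction of the corollary is exactly the paper's own: the paper derives Corollary \ref{gen free product} from Theorem \ref{homeo free product} by precisely the induction you describe (regroup $G_1=A_1\ast(A_2\ast\cdots\ast A_k)$, apply the inductive hypothesis to the truncated products, then the two-factor theorem), and since $\partial_{f}$ is an invariant of the group itself, independent of generating set or splitting, the associativity point you flag really does need only a line; this part is correct and complete. Where you diverge is in your sketch of the base case, and there the difference is organizational rather than substantive. The compatibility problem you correctly identify --- that an abstract tree isomorphism $\Psi$ chosen independently of $h_A,h_B$ cannot be continuous at stabilizer boundary points --- is exactly what the paper's Lemma \ref{main lemma} solves: given a homeomorphism $\psi\colon\partial_{f}(G)\to\partial_{f'}(H)$, the same density back-and-forth you outline produces a bijection $b\colon G\to H$ such that $b\cup\psi$ is a homeomorphism of $G\cup\partial_{f}(G)$ onto $H\cup\partial_{f'}(H)$. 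The paper runs this back-and-forth only once per free factor, obtaining $\alpha\colon A_1\to A_2$ and $\beta\colon B_1\to B_2$, and then defines the tree isomorphism globally on reduced words, $a_1b_1\cdots a_nb_n\tau_1\mapsto\alpha(a_1)\beta(b_1)\cdots\alpha(a_n)\beta(b_n)\tau_2$; compatibility at every vertex then comes for free from translation (Lemma \ref{homeo identification}) together with Corollary \ref{main cor}, so there is no simultaneous vertex-by-vertex recursion and none of the bookkeeping you describe as the genuinely delicate part. One small imprecision in your sketch: it is not true that all but finitely many edges at $v$ lie in a prescribed neighborhood of a single point $\xi\in\partial_{f}(G_v)$ (deep edges accumulate on all of $\partial_{f}(G_v)$, not on one point); what the construction actually uses is that each group element is Floyd-close to a nearest boundary point, i.e.\ the map $\pi$ in the proof of Lemma \ref{main lemma}, which is the precise form of your ``deep branches land in small target neighborhoods.''
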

{\bf Notation:} Let $h_A:\partial_{f_1}(A_1)\to\partial_{f_2}(A_2)$ and $h_B:\partial_{f_1}(B_1)\to\partial_{f_2}(B_2)$ be the fixed homeomorphism of Floyd boundaries. We denote by $q:\partial_{f_1}(A_1)\cup\partial_{f_1}(B_1)\to\partial_{f_2}(A_2)\cup\partial_{f_2}(B_2)$ the homeomorphism induced by $h_A$ and $h_B$. Let $T_1$ and $T_2$ be the Bass--Serre trees of $A_1\ast B_1$ and $A_2\ast B_2$ respectively. Finally, we denote by $\Gamma_{f_1}$ and $\Gamma_{f_2}$ the Cayley graphs of $G_1$ and $G_2$, respectively. 
\begin{rem}
	The construction of a Cayley graph of a free product is same as that of $\Gamma_f$ as in Section \ref{2.3} by taking $H$ to be trivial.
\end{rem}

Suppose $G$ and $H$ are two finitely generated groups, and $f$, $f'$ are two Floyd functions for $G$ and $H$, respectively. Let $\Gamma_G$ and $\Gamma_H$ be the Cayley graphs of $G$ and $H$ respectively with the Floyd metrics $d_{f,G}$ and $d_{f',H}$. Then, $\overline{\Gamma}_G=\Gamma_G\cup\partial_{f}(G),\overline{\Gamma}_H=\Gamma_H\cup\partial_{f'}(H)$ are compact metrizable spaces. Then one has metric subspaces  $G\cup\partial_{f}(G)$ and $H\cup\partial_{f'}(H)$ with metric $d_{f,G}$ and $d_{f',H}$, respectively.. The following lemma is an analog of \cite[Lemma 4.2]{martin-swiat} in the context of Floyd boundaries.

\begin{lemma}\label{main lemma}
	Let $\psi:\partial_{f}(G)\to\partial_{f'}(H)$ be a homeomorphism. Then there is a bijection $b:G\to H$ such that $b(1)=1$ and $b\cup \psi:G\cup\partial_{f}(G)\to H\cup\partial_{f'}(H)$ is a homeomorphism.
\end{lemma}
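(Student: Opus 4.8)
The plan is to view each of $X:=G\cup\partial_f(G)$ and $Y:=H\cup\partial_{f'}(H)$ as a metrizable compactification of a countable discrete set and to extend $\psi$ across the discrete part. First I would record the structural facts. Using the two defining properties of a Floyd function, every edge of $\Gamma_G$ incident to a vertex $g$ has Floyd length at least $f(d(e_G,g))>0$, since its endpoints are at word distance $\leq d(e_G,g)$ from $e_G$ and $f$ is nonincreasing; as any path reaching $g$ must traverse such an edge, $d_{f,G}(g,g')\ge f(d(e_G,g))>0$ for every $g'\neq g$. Hence each element of $G$ is an isolated point of $X$, while, by the existence of shortest sequences (Lemma \ref{shortest cauchy}), every point of $\partial_f(G)$ is a limit of points of $G$; thus $G$ is dense in $X$ and $\partial_f(G)=X\setminus G$ is the closed remainder. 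The same holds for $Y$, $H$, $\partial_{f'}(H)$.

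Next I would make the key reduction. Since $X$ is compact (a closed subspace of the compact space $\overline{\Gamma}_G$) and $Y$ is Hausdorff, any continuous bijection $X\to Y$ is automatically a homeomorphism. As $b\cup\psi$ is a bijection whenever $b:G\to H$ is, and as it is automatically continuous at the isolated points of $G$, it suffices to produce a bijection $b$ with $b(1)=1$ that is continuous at each boundary point; concretely, such that
$$ d_j\to\xi\in\partial_f(G)\ \Longrightarrow\ b(d_j)\to\psi(\xi)\qquad(d_j\in G). $$
No separate condition on $b^{-1}$ is required.

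For the construction I would first note the finiteness principle: for every $\epsilon>0$ the set $\{g\in G: d_{f,G}(g,\partial_f(G))\ge\epsilon\}$ is finite, for otherwise it would accumulate at a point of $X$ which, being non-isolated, lies in $\partial_f(G)$, a contradiction. Thus I can enumerate $G=\{d_1,d_2,\dots\}$ and $H=\{e_1,e_2,\dots\}$ so that $r(d_n):=d_{f,G}(d_n,\partial_f(G))$ and $r'(e_n):=d_{f',H}(e_n,\partial_{f'}(H))$ are nonincreasing and tend to $0$. For each $g$ I fix a nearest boundary point $\pi(g)\in\partial_f(G)$, and similarly $\pi'$ on the $H$ side; recall that $\psi$ and $\psi^{-1}$ are uniformly continuous (homeomorphisms of compact metric spaces). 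I would then build $b$ by a back-and-forth argument with $b(1)=1$: at a forth step I match the first unmatched $d$ to an unused isolated point $e$ with $d_{f',H}(e,\psi(\pi(d)))$ below a prescribed tolerance, and at a back step I match the first uncovered $e$ to an unused $d$ with $d_{f,G}(d,\psi^{-1}(\pi'(e)))$ below tolerance, letting the tolerances shrink to $0$ along the enumeration. Such choices are always available because each boundary point is a limit of infinitely many isolated points on each side (density), so there are always unused isolated points arbitrarily close to any prescribed $\psi(\pi(d))$ or $\psi^{-1}(\pi'(e))$.

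Finally I would verify the target implication. If $d_j\to\xi$ then $r(d_j)\to0$ and $\pi(d_j)\to\xi$, so by the tolerance bound and uniform continuity of $\psi$,
$$ d_{f',H}\big(b(d_j),\psi(\xi)\big)\le d_{f',H}\big(b(d_j),\psi(\pi(d_j))\big)+d_{f',H}\big(\psi(\pi(d_j)),\psi(\xi)\big)\longrightarrow 0, $$
while the back steps make $b$ surjective without disturbing this estimate. I expect the main obstacle to be exactly this bookkeeping: arranging a single bijection that is simultaneously onto and keeps the discrepancy $d_{f',H}(b(d),\psi(\pi(d)))$ tending to $0$ along the enumeration. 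The enabling input is the density fact that every boundary point is accumulated by infinitely many isolated points on both sides, which prevents the greedy matching from getting stuck. Granting this, $b\cup\psi$ is a continuous bijection from the compact space $X$ onto the Hausdorff space $Y$, hence a homeomorphism, with $b(1)=1$ by construction.
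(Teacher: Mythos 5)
Your proposal is correct and follows essentially the same route as the paper: both build $b$ by an alternating back-and-forth greedy matching that uses a nearest-point projection $\pi$ onto the boundary, density of the group in its Floyd compactification, and tolerances shrinking to $0$, and then upgrade the continuous bijection $b\cup\psi$ to a homeomorphism by compactness. The only differences are cosmetic: your back step measures its tolerance on the $G$-side via $\psi^{-1}(\pi'(e))$, whereas the paper measures it on the $H$-side relative to $d_{f',H}(h_k,\partial_{f'}(H))$, and you make explicit the isolation and finiteness facts that the paper leaves implicit.
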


\begin{proof}
	Define a map $\pi:G\to \partial_f(G)$ which takes $g\in G$ to $\xi\in \partial_f(G)$ such that $d_{f,G}(g,\partial_f(G))=d_{f,G}(g,\xi)$ (such a $\xi$ exists as $\partial_{f}(G)$ is compact). 
	An easy application of triangle inequality shows that $\pi(G)$ is dense in $\partial_f(G)$. Order the elements of $G\setminus\{1\}$ and $H\setminus\{1\}$ into sequences $\{g_k\}_{k\in\mathbb N}$ and $\{h_k\}_{k\in\mathbb{N}}.$ Define $b(1)=1$. To choose the required $b$, iterate the following two steps alternatively.
	
	{\bf Step 1.} Suppose $k$ is the smallest number for which $b(g_k)$ is not yet defined. Since $H$ is dense in $H\cup\partial_{f'}(H)$, choose some $l\in\mathbb N$ such that $h_l$ is not an image of any $g_i$ under the map $b$ and $$d_{f',H}(h_l,\psi(\pi(g_k)))<\dfrac{1}{k}.$$ Then, define $b(g_k)=h_l$.
	
	{\bf Step 2.} Suppose $k$ is the smallest number for which $h_k$ is not chosen as the image of any $g\in G$ under $b$. Since $\pi(G)$ is dense in $\partial_{f}(G)$, hence $\psi(\pi(G))$ is dense in $\partial_{f'}(H)$. Choose $g\in G\setminus\{1\}$ such that $b$ has not yet been defined on $g$ and $$d_{f',H}(h_k,\psi(\pi(g)))<d_{f',H}(h_k,\partial_{f'}(H))+\dfrac{1}{k}.$$ Then, define $b(g)=h_k$.
	
	By performing the above two steps alternatively, we see that $b$ is a bijection. Since $G \cup \partial_{f}(G)$ and $H \cup\partial_{f'}(H)$ are compact metrizable, to prove that $b\cup \psi$ is a homeomorphism, it is sufficient to prove that $b\cup \psi$ is continuous. 
	
	Consider a sequence $\{x_i\}$ of $G$ that converges to $\xi\in\partial_{f}(G)$. Then, $\lim_{i}d_{f,G}(x_i,\xi)=0$ and thus $\lim_{i}d_{f,G}(x_i,\partial_f(G))\to 0$. This implies that $\lim_{i}d_{f,G}(x_i,\pi(x_i))=0$ and , by using triangle inequality, therefore $\pi(x_i)$ converges to $\xi$. Hence, by continuity of $\psi$, $\psi(\pi(x_i))$ converges to $\psi(\xi)$. From the definition of $b$, it follows that $\lim_{i}d_{f',H}(b(x_i),\psi(\pi(x_i)))=0$. Thus, $b(x_i)$ converges to $\psi(\pi(\xi))$. Hence, $b\cup \psi$ is continuous.
\end{proof}

We immediately have the following:
\begin{cor}\label{main cor}
	There exists a bijection $b:G\to H$ such that the following holds:
	
	Let $\xi\in \partial_{f'}(H)$ and let $U_2$ be an open neighborhood of $\xi$ in $\overline{\Gamma}_H$. Then there exists a neighborhood $U_1$ of $f^{-1}(\xi)$ in $\overline{\Gamma}_G$ such that $b(G\cap U_1)\subset U_2$.
\end{cor}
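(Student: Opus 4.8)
The plan is to read this off directly from Lemma \ref{main lemma}, which already produces a bijection $b:G\to H$ together with a \emph{homeomorphism} $b\cup\psi:G\cup\partial_f(G)\to H\cup\partial_{f'}(H)$. The corollary is essentially just the assertion that this map is continuous at boundary points. (Here the symbol $f^{-1}(\xi)$ in the statement should be read as $\psi^{-1}(\xi)$, since $\psi$ is the homeomorphism of boundaries supplied by the lemma.)

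First I would fix $b$ and $b\cup\psi$ to be exactly the objects constructed in Lemma \ref{main lemma}. Given $\xi\in\partial_{f'}(H)$, set $\eta:=\psi^{-1}(\xi)\in\partial_f(G)$, so that $(b\cup\psi)(\eta)=\psi(\eta)=\xi$. Given the open neighborhood $U_2$ of $\xi$ in $\overline{\Gamma}_H$, its trace $U_2\cap(H\cup\partial_{f'}(H))$ is an open neighborhood of $\xi$ in the subspace $H\cup\partial_{f'}(H)$. Since $b\cup\psi$ is continuous, the preimage $V_1:=(b\cup\psi)^{-1}\bigl(U_2\cap(H\cup\partial_{f'}(H))\bigr)$ is an open neighborhood of $\eta$ in $G\cup\partial_f(G)$.

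The only genuine bookkeeping step is passing from the subspace $G\cup\partial_f(G)$ back to the full Floyd completion $\overline{\Gamma}_G=\Gamma_G\cup\partial_f(G)$ demanded by the statement. Since $G\cup\partial_f(G)$ carries the subspace topology inherited from $\overline{\Gamma}_G$, there is an open set $U_1$ in $\overline{\Gamma}_G$ with $U_1\cap(G\cup\partial_f(G))=V_1$; this $U_1$ is then a neighborhood of $\eta=\psi^{-1}(\xi)$ in $\overline{\Gamma}_G$. Finally, because $G\subset G\cup\partial_f(G)$ we have $G\cap U_1=G\cap V_1$, and for every $g\in G\cap V_1$ the definition of $V_1$ gives $b(g)=(b\cup\psi)(g)\in U_2$. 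Hence $b(G\cap U_1)\subset U_2$, which is the desired conclusion.

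I do not expect a real obstacle here: the entire content is packaged in Lemma \ref{main lemma}, and the argument reduces to unwinding the continuity of $b\cup\psi$ together with one routine use of the definition of the subspace topology. The only two points to keep straight are the typographical identification of $f^{-1}$ with $\psi^{-1}$, and the distinction between the vertex-set-plus-boundary subspace $G\cup\partial_f(G)$ and the whole completion $\overline{\Gamma}_G$ in which $U_1$ is required to live.
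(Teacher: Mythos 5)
Your proof is correct and follows essentially the same route as the paper: both invoke Lemma \ref{main lemma} to obtain the homeomorphism $b\cup\psi$, pull $U_2$ back through it to get a relatively open neighborhood of $\psi^{-1}(\xi)$ in $G\cup\partial_f(G)$, and then enlarge this to an open set $U_1$ of $\overline{\Gamma}_G$. The only (immaterial) difference is in that last enlargement step --- the paper separates $\psi^{-1}(\xi)$ from the compact set $(G\cup\partial_f(G))\setminus(b\cup\psi)^{-1}(U_2)$, whereas you simply use the definition of the subspace topology --- and you also correctly read the paper's $f^{-1}(\xi)$ as an abuse of notation for $\psi^{-1}(\xi)$.
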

\begin{proof}
Let $b$ be the bijection as in Lemma \ref{main lemma}. Thus, $\bar{b}:=b\cup f:G\cup\partial_{f}(G)\to H\cup\partial_{f'}(H)$ is a homeomorphism. We prove that $b$ is the required bijection. Since $U_2$ is an open neighborhood of $\xi\in \partial_{f'}(H)$, viewing $H\cup\partial_f'(H)$ as a subspace of $\overline{\Gamma}_H$, $\bar{b}^{-1}(U_2)$ is an open neighborhood of $f^{-1}(\xi)$ in $G\cup\partial_{f}(G)$. Since $K=(G\cup\partial_f(G))\setminus\bar{b}^{-1}(U_2)$ is compact and $f^{-1}(\xi)\in \overline{\Gamma}_G\setminus K$, there exists an open neighborhood $U_1$ of $f^{-1}(\xi)$ in $\overline{\Gamma}_G$ such that $U_1\subset\overline{\Gamma}_G\setminus K$. Now it is clear that $b(G\cap U_1)\subset U_2$.
\end{proof}

Using the arguments in the Lemma \ref{main lemma}, we get the following topological result as a corollary, which we will use in the coming sections.
\begin{cor}\label{topo1}
	Let $M,$ $N$ be compact metric spaces with countable dense subspaces $X \subset M$, $Y \subset N$ such that every element of $X$ and $Y$ are isolated in $M$ and $N,$ respectively. Denote $\partial M:= M - X$ and $\partial N:= N-Y$. Assume that there exists a homeomorphism $\psi: \partial M \rightarrow \partial N$. Then there exists a bijection $b: X \rightarrow Y$ such that the map $b \cup \psi: M \rightarrow N$ is a homeomorphism. \qed
\end{cor}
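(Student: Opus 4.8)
The plan is to reproduce, in this purely topological setting, the back-and-forth argument of Lemma \ref{main lemma}. The first thing I would record is that, since every point of $X$ is isolated in $M$, the set $X$ is open, so $\partial M = M - X$ is a closed subset of the compact space $M$ and hence is itself compact; the same holds for $\partial N$. Compactness of $\partial M$ lets me define a nearest-point projection $\pi \colon X \to \partial M$ by choosing, for each $x \in X$, a point $\pi(x) \in \partial M$ with $d_M(x,\partial M) = d_M(x,\pi(x))$. A short triangle-inequality argument, exactly as in the proof of Lemma \ref{main lemma}, shows that $\pi(X)$ is dense in $\partial M$: given $\xi \in \partial M$ and $x \in X$ close to $\xi$, the point $\pi(x)$ is even closer to $\xi$. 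Since $\psi$ is a homeomorphism, $\psi(\pi(X))$ is then dense in $\partial N$.

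With these two density facts in hand I would run the alternating construction of Lemma \ref{main lemma} verbatim. Enumerate $X = \{x_k\}$ and $Y = \{y_k\}$. In the forth step I take the least $k$ with $b(x_k)$ undefined and, using that $\psi(\pi(x_k)) \in \partial N$ is a limit point of the dense set $Y$, pick an unused $y_l$ with $d_N(y_l,\psi(\pi(x_k))) < 1/k$, setting $b(x_k) = y_l$. In the back step I take the least $k$ with $y_k$ not yet in the image and, using density of $\psi(\pi(X))$ in $\partial N$, pick an unused $x$ with $d_N(y_k,\psi(\pi(x))) < d_N(y_k,\partial N) + 1/k$, setting $b(x) = y_k$. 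Alternating the two steps produces a bijection $b \colon X \to Y$, and since $M = X \sqcup \partial M$ and $N = Y \sqcup \partial N$, the map $b \cup \psi \colon M \to N$ is a bijection.

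Because $M$ is compact and $N$ is Hausdorff, it suffices to prove $b \cup \psi$ is continuous; continuity at points of $X$ is automatic, so the work is at a boundary point $\xi \in \partial M$. Given a sequence $z_i \to \xi$, I split it into the part lying in $\partial M$, which maps correctly by continuity of $\psi$, and the part lying in $X$; for the latter I must show $b(x_i) \to \psi(\xi)$ whenever $x_i \to \xi$ with $x_i \in X$. As in Lemma \ref{main lemma}, $d_M(x_i,\partial M) \to 0$ forces $\pi(x_i) \to \xi$ and hence $\psi(\pi(x_i)) \to \psi(\xi)$, so everything reduces to showing $d_N(b(x_i),\psi(\pi(x_i))) \to 0$.

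The main obstacle, and the one place where the proof is not a literal transcription of Lemma \ref{main lemma}, is that the back step only guarantees the weaker estimate $d_N(b(x),\psi(\pi(x))) < d_N(b(x),\partial N) + 1/k$, which is not manifestly small. I would resolve this by a subsequence argument. Since $x_i \to \xi \notin X$ and points of $X$ are isolated, the $x_i$ may be taken distinct, so their enumeration indices and the indices of the distinct images $b(x_i)$ both tend to infinity. Pass to any subsequence along which $b(x_i) \to \eta \in N$ (possible by compactness of $N$). The forth-assigned terms already satisfy $d_N(b(x_i),\psi(\pi(x_i))) < 1/k_i \to 0$. For the back-assigned terms, the images $b(x_i)$ are distinct elements of $Y$, so their limit $\eta$ cannot be an isolated point and must lie in the closed set $\partial N$; hence $d_N(b(x_i),\partial N) \le d_N(b(x_i),\eta) \to 0$, and combined with $1/k_i \to 0$ the weak estimate yields $d_N(b(x_i),\psi(\pi(x_i))) \to 0$. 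In both cases $\eta = \psi(\xi)$, so $b(x_i) \to \psi(\xi)$. This establishes continuity of $b \cup \psi$, and a continuous bijection from a compact space to a Hausdorff space is a homeomorphism, completing the proof.
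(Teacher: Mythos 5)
Your proof is correct and follows exactly the route the paper intends for this corollary: it transplants the back-and-forth construction of Lemma \ref{main lemma} to the abstract setting, using compactness of $\partial M$ to define the nearest-point projection $\pi$, density of $\pi(X)$ for the alternating choices, and compactness plus continuity to upgrade the bijection to a homeomorphism. Your subsequence argument for the back-assigned terms (showing $d_N(b(x_i),\partial N)\to 0$ because a sequence of distinct points of $Y$ can only accumulate in $\partial N$) in fact makes explicit a step that the paper's own proof of Lemma \ref{main lemma} asserts without justification, so your write-up is, if anything, more complete than the original.
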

Using Corollary \ref{topo1}, we prove the following topological lemma, which turns out to be useful later.
In the following lemma, the closure of a subset $U$ of a metric space $X$ shall be denoted by $\overline{U}$.

\begin{lemma}\label{main topo lemma}	
	Let $(X,d)$ be a compact metric space and $Y$ be a countable dense subset of $X$ containing all isolated points of $X$. Define $\partial X:= X\setminus Y.$ Then, there exist metric subspaces $Y_1$, $Y_2$ of $Y$ such that
	\begin{enumerate}
		\item $Y= Y_1 \sqcup Y_2$.
		\item There exist bijections $\varphi_1: Y \rightarrow Y_1$ and $\varphi_2 : Y \rightarrow  Y_2$, which induce homeomorphisms $\bar{\varphi_1}: \overline{Y}=X \rightarrow Y_1 \cup  \partial X$ and $\bar{\varphi_2} : \overline{Y}=X \rightarrow Y_2 \cup \partial X$ such that $\bar{\varphi_i} \bigr|_{\partial X}$ is the identity map.
	\end{enumerate}

\end{lemma}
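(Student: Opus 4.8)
The plan is to obtain each homeomorphism $\bar{\varphi_i}$ as a single application of Corollary \ref{topo1}, so that the whole problem reduces to producing a suitable partition $Y = Y_1 \sqcup Y_2$. Concretely, I would invoke Corollary \ref{topo1} twice: once with $M = X$ and $N = Y_1 \cup \partial X$, and once with $M = X$ and $N = Y_2 \cup \partial X$, in each case taking the boundary homeomorphism $\psi$ to be the identity of $\partial X$. For this to be legitimate I need, for $i = 1,2$, that $Y_i \cup \partial X$ is a compact metric space in which $Y_i$ is a countable dense subset of isolated points whose complement is exactly $\partial X$. Corollary \ref{topo1} then hands me a bijection $\varphi_i \colon Y \to Y_i$ for which $\bar{\varphi_i} := \varphi_i \cup \mathrm{id}_{\partial X} \colon X \to Y_i \cup \partial X$ is a homeomorphism restricting to the identity on $\partial X$, which is precisely condition (2).

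First I would record the two structural facts that make the reduction valid. Since $Y$ consists of isolated points of $X$ (equivalently, $Y$ is open), the set $\partial X = X \setminus Y$ is closed and hence compact, and each point of $Y_i$ remains isolated in the subspace $Y_i \cup \partial X$. Thus the only genuine requirements on the partition are: (i) the complement $X \setminus (Y_i \cup \partial X) = Y_{3-i}$ is open, so that $Y_i \cup \partial X$ is closed and therefore compact; and (ii) every point of $\partial X$ is a limit of points of $Y_i$, so that $Y_i$ is dense in $Y_i \cup \partial X$ and the boundary of this space is exactly $\partial X$. Requirement (i) is automatic once every point of $Y$ is isolated, since then $Y_{3-i} \subset Y$ is open. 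So the real content is to split $Y$ so that \emph{both} halves accumulate onto all of $\partial X$.

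To build such a splitting I would fix a countable basis $\{B_n\}_{n\ge 1}$ for $X$ and note that, because $Y$ is dense and disjoint from $\partial X$, every $p \in \partial X$ is a limit point of $Y$; hence each basis element meeting $\partial X$ contains infinitely many points of $Y$. I would then assign the points of $Y$ to $Y_1$ and $Y_2$ by a greedy countable procedure: running through those $B_n$ that contain infinitely many points of $Y$, place one as-yet-unassigned point of $Y \cap B_n$ into $Y_1$ and another into $Y_2$, interleaving the passes so that every point of $Y$ is eventually assigned (leftover points going, say, to $Y_1$). This guarantees $Y_1 \cap B_n \ne \emptyset \ne Y_2 \cap B_n$ for each such $B_n$, and since every neighborhood of a point $p \in \partial X$ contains some $B_n \ni p$ with infinitely many $Y$-points, we obtain $\partial X \subset \overline{Y_1} \cap \overline{Y_2}$, which is requirement (ii).

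With the partition in hand, both hypotheses of Corollary \ref{topo1} hold for $N = Y_i \cup \partial X$, and applying it twice produces $\varphi_1, \varphi_2$ together with the homeomorphisms $\bar{\varphi_1}, \bar{\varphi_2}$ fixing $\partial X$ pointwise, completing the proof. I expect the main obstacle to be the bookkeeping in the greedy assignment: one must ensure simultaneously that every point of $Y$ is used exactly once (so that $Y_1, Y_2$ genuinely partition $Y$) and that both halves still accumulate at \emph{every} boundary point. The compactness of $Y_i \cup \partial X$, resting on $Y$ being open, is the other point to verify carefully, since without it the target spaces would fail to be compact and Corollary \ref{topo1} could not be applied.
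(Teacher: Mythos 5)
Your proof is correct, but it takes a genuinely different route from the paper. You build the partition $Y=Y_1\sqcup Y_2$ by hand — using second countability of $X$ and a greedy, diagonal assignment to ensure both $Y_1$ and $Y_2$ accumulate at every point of $\partial X$ — and then apply Corollary \ref{topo1} twice, with $M=X$, $N=Y_i\cup\partial X$ and $\psi=\mathrm{id}_{\partial X}$. The paper instead never constructs the partition directly: it forms the \emph{double} of $X$ along $\partial X$, gluing $X$ to an isometric copy $X'$ via the quotient metric of \cite[Lemma 5.24, I.5]{bridson-haefliger} to get a compact space $Z$ with $Z\setminus(Y\sqcup Y')=\partial X$, applies Corollary \ref{topo1} once to obtain a homeomorphism $\bar{\varphi}:Z\to X$ fixing $\partial X$, and then defines $Y_1=\varphi(Y)$, $Y_2=\varphi(Y')$, restricting $\bar\varphi$ to the two embedded copies of $X$ inside $Z$. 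Your route is more elementary and self-contained (no gluing metric, no external reference), at the cost of the bookkeeping you correctly identify: one must check both that every point of $Y$ is used exactly once and that each half meets every basic neighborhood of every boundary point, and your verification of compactness of $Y_i\cup\partial X$ (openness of $Y_{3-i}$) is exactly the right point to isolate. The paper's route gets the partition for free from a single application of the corollary, which is why Remark \ref{rem1}(1) extends it to $n$ pieces by gluing $n$ copies — though your greedy construction generalizes to $n$ pieces just as easily. Note also that both arguments silently strengthen the hypothesis in the same way: Corollary \ref{topo1} needs the dense subsets to \emph{consist} of isolated points, so "containing all isolated points" must be read (as you do explicitly, and as the intended application to $G\cup\partial_f(G)$ justifies) as saying that $Y$ is precisely the set of isolated points of $X$.
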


\begin{proof}
	Let $(X', d')$ be a metric space such that $\psi: X \rightarrow X'$ is an isometric embedding. Let $Y'=\psi(Y)$ and let $\partial X'=X'\setminus Y'=\psi(\partial X)$. Let $q:X\sqcup X'\to (X\sqcup X')/_{\sim}$ be a quotient map where $x\sim \psi(x)$ for all $x\in\partial X$. Note that , as a set, $(X\sqcup X')/_{\sim}$ is same as $Z:=\partial X\sqcup Y\sqcup Y'$. Now, by \cite[Lemma 5.24, I.5]{bridson-haefliger}, we define a metric $d_q$ on $Z$ as follows: 
	\begin{itemize}
		\item $d_q(x,y):= d(x,y)$ if $x,y \in Y \cup \partial X$ 
		\item $d_q(x,y):= d'(x,y)$ if $x,y \in Y'$.
		\item $d_q(x,y):= d'(x, \psi(y))$ if $x \in Y'$, $y \in \partial X$.
		\item $d_q(x,y):= \inf_{z \in \partial X} \left\{d(x, z ) + d'(y, \psi(z))\right\}$ if $x \in Y$, $y \in Y'$.
	\end{itemize}
	The metric $d_q$ is compatible with the topology defined on $Z$ by the quotient map $q$. It is clear from the definition of $d_q$ that the restriction of $q$ on either $X$ or on $X'$ is an isometric embedding. Also, $Y \sqcup Y'$ is a countable dense subset of $Z$, which contains all isolated points of $Z$. 
	Now, we define
	$$
	\partial Z:=Z\setminus (Y \sqcup Y').$$
	Note that the restriction of $q$ on $\partial X$ is an isometry onto $\partial Z$, which is nothing but the identity map. Since $Z$ is compact, by applying Corollary \ref{topo1},
	we get a bijection
	$\varphi: Y \sqcup Y' \rightarrow Y$ and it induces a homeomorphim $\bar{\varphi}: Z \rightarrow X.$ Let $\varphi(Y)=Y_1$ and $\varphi(Y')=Y_2$. Then, $\bar{\varphi}(\overline{Y})=Y_1\cup\partial X$ and $\bar{\varphi}(Y'\cup\partial X)=Y_2\cup\partial X$.
	We take $\varphi_1$ to be the restriction of $\varphi$ to $Y$ and $\varphi_2$ to be the composition $\varphi\bigr|_{Y'}\circ q \circ \psi$ restricted to $Y$. 
	Hence the lemma.
\end{proof}
\begin{rem}\label{rem1}
	(1) The previous lemma can be generalized to $n$ many $Y_i$'s with the properties as in the lemma.
	
	(2) Using the notations used in Lemma \ref{main topo lemma}, assume there $n$ many $Y_i$'s with the properties mentioned in the lemma. Let $\xi \in \partial X$ and $U_2$ be a neighborhood of $x$ in $X$. It is straightforward to check that there exists a neighborhood $U_1$ of $x$ in $X$ such that $\bar{\varphi}_i(U_1) \subset U_2$.
\end{rem}

{\bf Isomorphism between $T_1$ and $T_2$.} Corresponding to homeomorphisms $h_A$ and $h_B$, let $\alpha$ and $\beta$ be the bijections by Lemma \ref{main lemma}. For $i=1,2$, let $\tau_i=[v_i,u_i]$ be the edge of $T_i$ such that $v_i$ and $u_i$ are stabilized by $A_i$ and $B_i$ respectively. Recall that each non-trivial element $g\in G_1$ can be expressed uniquely, in {\em reduced form}, as $g=a_1b_1,\dots ,b_n$, with $a_j\in A_1\setminus\{1\},b_j\in B_1\setminus\{1\}$, allowing also that $a_1=1$ and that $b_n=1$. Define a map $\iota:T_1\to T_2$ that maps any edge $a_1b_1\dots a_nb_n\tau_1$ to the edge $\alpha(a_1)\beta(b_1)\dots \alpha(a_n)\beta(b_n)\tau_2$. It is easy to check that $\iota$ is an isomorphism such that $\iota(\tau_1)=\tau_2$.

Now, we are ready to prove that $\partial_{f_1}(G_1)$ is homeomorphic to $\partial_{f_2}(G_2)$. For that it is sufficient to prove that $\delta(\Gamma_{f_1})$ is homeomorphic to $\delta(\Gamma_{f_2})$. Note that, if $\xi\in\partial_{f_1}(A_1)$ and $g\in G_1$ then the set of all representatives of $[g,\xi]$ are of the form $(ga,a^{-1}\xi)$ where $a\in A_1$. Similarly, if $\xi\in\partial_{f_1}(B_1)$ and $g\in G$ then the set of all representatives are of the form $(gb,b^{-1}\xi)$ for $b\in B_1$. When $\xi\in\partial_{f_1}(A_1)$, we choose a unique $ga=a_1b_1,\dots ,a_nb_n$ for which $n$ is smallest possible (in this case we have $b_n\neq 1$). When $\xi\in \partial_{f_1}(B_1)$, we choose a unique $gb=a_1b_1,\dots ,a_nb_n$ for which $b_n=1$. These representatives of an element $[g,\xi]\in\delta_{Stab}(\Gamma_{f_1})$ are called {\em reduced represenatives.} Finally, we define a map $F:\delta(\Gamma_{f_1})\to\delta(\Gamma_{f_2})$ in the following manner:
\begin{enumerate}
	\item Let $[g,\xi]\in\delta_{Stab}(\Gamma_{f_1})$ and let $(g,\xi)$ be its reduced representative. Define $$F([a_1b_1,\dots ,a_nb_n,\xi])=[\alpha(a_1)\beta(b_1),\dots ,\alpha(a_n)\beta(b_n),q(\xi)].$$
	\item Let $\eta\in \partial T_1$. We can represent it as an infinite word $\eta=a_1b_1,\dots $ such that for each $n$, the subword consisting of its first $n$ letters corresponds to the $n$-th edge of the geodesic from $v_1$ to $\eta$ via the correspondence $g\to g.\tau_1$. Define $$F((a_1b_1,\dots ))=\alpha(a_1)\beta(b_1),\dots $$
	where the infinite word on the right gives a geodesic ray in $T_2$ starting from $v_2$.
\end{enumerate}

Note that the restriction of $F$ to $\partial(T_1)$ is same as the map $\partial(T_1)\to\partial(T_2)$ induced from the isomorphism $\iota:T_1\to T_2$.
\begin{lemma}\label{hom}
	The map $F$ is a homeomorphism.
\end{lemma}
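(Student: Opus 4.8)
The plan is to show that $F$ is a continuous bijection between compact metrizable spaces, from which it follows at once that $F$ is a homeomorphism. By Propositions \ref{p2} and \ref{eq1}, each of $\delta(\Gamma_{f_1})$ and $\delta(\Gamma_{f_2})$ is homeomorphic to the corresponding Floyd boundary, hence compact and metrizable; so once $F$ is shown to be a continuous bijection, the standard fact that a continuous bijection from a compact space onto a Hausdorff space is a homeomorphism completes the proof. Bijectivity of $F$ is immediate from the construction: $\alpha$, $\beta$ and $q$ are bijections and $\iota$ is an isomorphism of trees, and applying the same recipe to $\alpha^{-1}$, $\beta^{-1}$, $q^{-1}$, $\iota^{-1}$ produces a two-sided inverse. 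Since reduced representatives are unique, $F$ is well defined on $\delta_{Stab}(\Gamma_{f_1})$, and on $\partial T_1$ it agrees with the boundary map induced by $\iota$. It therefore remains only to verify continuity, which I check separately at points of $\delta_{Stab}(\Gamma_{f_1})$ and at points of $\partial T_1$, using the neighborhood basis defining $\mathcal T_o$.

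For continuity at a point $\xi \in \partial_{f_1}(G_v) \subset \delta_{Stab}(\Gamma_{f_1})$, I would work with the neighborhoods $V_U(\xi)$. After translating by the relevant group element and invoking Lemma \ref{homeo identification}, one reduces to the case where $v$ is a base vertex, say $v=v_1$ with $G_v=A_1$, so that $\xi\in\partial_{f_1}(A_1)$ and $F(\xi)=q(\xi)=h_A(\xi)$. The edges of $T_1$ issuing from $v_1$ are indexed by the elements $a\in A_1$ via $a\mapsto a\tau_1$, and $F$ sends the edge $a\tau_1$ to $\alpha(a)\tau_2$. Hence, given a basic neighborhood $V_{U_2}(h_A(\xi))$ with $U_2$ an open neighborhood of $h_A(\xi)$ in $\overline{\Gamma}_{v_2}^{f_2}$, identified with the Floyd completion $A_2\cup\partial_{f_2}(A_2)$, the continuity of the homeomorphism $\alpha\cup h_A\colon A_1\cup\partial_{f_1}(A_1)\to A_2\cup\partial_{f_2}(A_2)$ (equivalently, the property of $\alpha$ recorded in Corollary \ref{main cor}) furnishes an open neighborhood $U_1$ of $\xi$ with $\alpha(A_1\cap U_1)\subset U_2$ and $h_A(\partial_{f_1}(A_1)\cap U_1)\subset U_2$. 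One then checks directly that $F(V_{U_1}(\xi))\subset V_{U_2}(h_A(\xi))$: a boundary point in $U_1$ maps into $U_2$ under $h_A$, while any $z$ whose geodesic $[v_1,p(z)]$ starts with $a\tau_1$ for $a\in A_1\cap U_1$ has image whose geodesic starts with $\alpha(a)\tau_2$ and $\alpha(a)\in A_2\cap U_2$. The case $G_v=B_1$ is identical with $\beta,h_B$ in place of $\alpha,h_A$.

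For continuity at $\eta\in\partial T_1$, I would use the neighborhoods $V_n(\eta)=p^{-1}(\overline{T_n(\eta)}\setminus\{u_n(\eta)\})$. Writing $\eta'=F(\eta)$, the decisive point is that $F$ intertwines the projections to the trees, namely $p_2\circ F=\bar{\iota}\circ p_1$ on $\delta(\Gamma_{f_1})$, where $\bar\iota$ is the extension of $\iota$ to $\overline{T}_1$; this holds by construction, since $F$ sends the edge word along $[v_1,\eta)$ to the edge word along $[v_2,\eta')$. As $\iota$ is a tree isomorphism carrying $[v_1,\eta)$ onto $[v_2,\eta')$, it identifies $T_n(\eta)$ with $T_n(\eta')$ and $u_n(\eta)$ with $u_n(\eta')$, whence $F(V_n(\eta)\cap\delta(\Gamma_{f_1}))\subset V_n(\eta')$. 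So, given any basic neighborhood $V_m(\eta')$ of $\eta'$, it suffices to take $n=m$, and $F$ is continuous at every point of $\partial T_1$.

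I expect the main obstacle to be the bookkeeping in the second paragraph: reducing an arbitrary vertex $v$ to a base vertex through equivariance and Lemma \ref{homeo identification}, and confirming that the indexing of the edges at $v$ by coset elements is compatible with the bijections $\alpha$ and $\beta$, which are defined only on the free factors themselves. Once this compatibility is in place, the required neighborhood inclusions reduce cleanly to the continuity of $\alpha\cup h_A$ and $\beta\cup h_B$, while the tree-boundary case becomes a purely combinatorial consequence of $\iota$ being an isomorphism. With continuity established on both $\delta_{Stab}(\Gamma_{f_1})$ and $\partial T_1$, the compactness argument of the first paragraph then yields that $F$ is a homeomorphism.
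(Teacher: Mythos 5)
Your proof is correct and takes essentially the same route as the paper's: bijectivity from the construction, continuity at points of $\delta_{Stab}(\Gamma_{f_1})$ via Corollary \ref{main cor} and the neighborhood basis $V_U(\xi)$, continuity at points of $\partial T_1$ via the tree isomorphism $\iota$ and the basis $V_n(\eta)$, and compactness to upgrade a continuous bijection to a homeomorphism. The only difference is that you make explicit the bookkeeping the paper leaves implicit (reduction to base vertices via Lemma \ref{homeo identification} and the intertwining $p_2\circ F=\bar{\iota}\circ p_1$).
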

\begin{proof}
	From the definition of $F$, it follows that $F$ is a bijection. To show that $F$ is a homeomorphism, it is sufficient to prove that $F$ is continuous. There are two cases to be considered:
	
	{\bf Case 1.} Let $\xi\in \delta_{Stab}(\Gamma_{f_1})$ and let $v$ be the vertex of $T_1$ such that $\xi\in \partial_{f_1}(G_v)$. Let $U_2$ be an open neighborhood of $F(\xi)$ in $\overline{\Gamma}_{\iota{(v)}}^{f_2}$. By Corollary \ref{main cor}, we have an open neighborhood $U_1\subset\overline{\Gamma}_{v}^{f_1}$ of $\xi$. Then, from the definition of neighborhoods in $\overline{\Gamma}_{f_1}$ and in $\overline{\Gamma}_{f_2}$, it follows that $F(V_{U_1}\cap\delta(\Gamma_{f_1}))\subset V_{U_2}(F(\xi))\cap\delta(\Gamma_{f_2})$.
	
	{\bf Case 2.} Let $\eta\in\partial (T_1)$. For an integer $n\geq 1$, consider the subtree $(T_2)_n(F(\eta))\subset T_2$, defined with respect to $v_2$. Let $(T_1)_n(\eta)=\iota^{-1}((T_2)_n(F(\eta)))$, which is subtree of $T_1$ with respect to the base vertex $v_1$. Again, from definition of neighborhoods, it follows that $F(V_n(\eta)\cap\delta(\Gamma_{f_1}))=V_n(F(\eta))\cap\delta(\Gamma_{f_2})$.
\end{proof}
This completes the proof of Theorem \ref{homeo free product}.
We end this section by proving the following interesting case. Let $f$ be  a Floyd function.
\begin{prop}\label{finite amalgam}
	Let $G= A*A, K=A*F$, where $A$ is an infinite group and $F$ is a finite group. Then $\partial_f (G)$ is homeomorphic to $\partial_f (K)$.
\end{prop}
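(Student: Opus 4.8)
The plan is to identify both Floyd boundaries with the decorated tree boundaries produced by the construction of Section \ref{3}, and then to build an explicit homeomorphism between them, using the topological Lemma \ref{main topo lemma} to absorb the finite factor. Since $F$ is finite, $\partial_f(F)=\emptyset$ and its Cayley graph is a finite set; I will also assume $|F|\geq 2$, since otherwise $K=A$ and the claim fails. Write $T_G,T_K$ for the Bass--Serre trees and $\Gamma_f^G,\Gamma_f^K$ for the scaled Cayley graphs of $G=A\ast A$ and $K=A\ast F$. By the free product case of the construction in Section \ref{3} (the edge group taken trivial, cf.\ the remark following Corollary \ref{gen free product}) together with Proposition \ref{p2}, I identify $\partial_f(G)$ with $\delta(\Gamma_f^G)=\delta_{Stab}(\Gamma_f^G)\sqcup\partial T_G$ and $\partial_f(K)$ with $\delta(\Gamma_f^K)=\delta_{Stab}(\Gamma_f^K)\sqcup\partial T_K$, each carrying the neighborhood topology $\mathcal{T}_o$. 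In $\delta_{Stab}(\Gamma_f^G)$ every vertex of $T_G$ contributes a copy of $\partial_f(A)$, whereas in $\delta_{Stab}(\Gamma_f^K)$ only the $A$-vertices do; the $F$-vertices, having empty boundary and only finitely many incident edges, contribute nothing beyond redistributing the tree branching.

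The structural comparison is as follows. Root $T_K$ at the base $A$-vertex and contract each $F$-vertex into its parent $A$-vertex. Since every $A$-vertex has $|A|$ edges to $F$-vertices and every $F$-vertex has $|F|-1$ further $A$-children, after contraction each $A$-vertex acquires $|A|\cdot(|F|-1)=|A|$ grandchild $A$-vertices as its new children. The resulting decorated tree carries one copy of $\partial_f(A)$ at every vertex and has child-set of cardinality $|A|$ at every vertex---exactly the decorated structure of $T_G$. The only discrepancy is the bookkeeping of \emph{directions}: a child-branch of an $A$-vertex $v$ in $G$ is attached along a distinct vertex of $\Gamma_v$, so the branches are indexed by the vertex set of $\Gamma_v$; after contraction, the $|F|-1$ grandchild-branches passing through a fixed $F$-neighbor of $v$ are all attached along the \emph{same} vertex of $\Gamma_v$, so the $K$-side branches at $v$ are indexed by the vertex set of $\Gamma_v$ together with a label in $\{1,\dots,|F|-1\}$, accumulating onto $\partial_f(G_v)$ according to the underlying vertex of $\Gamma_v$.

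The local matching is supplied by Corollary \ref{topo1} and Lemma \ref{main topo lemma}. Applying Corollary \ref{topo1} with $M=\overline{\Gamma}_A^f$ and $N$ the compact space obtained by gluing $|F|-1$ copies of the vertex set of $\Gamma_A$ along the common Floyd boundary $\partial_f(A)$ (this is the gluing in the proof of Lemma \ref{main topo lemma} and its $n$-fold generalization in Remark \ref{rem1}(1)), and taking $\psi$ to be the identity on $\partial_f(A)$, I obtain at each vertex a bijection between the two branch-index sets that extends to a homeomorphism of completions fixing $\partial_f(G_v)$ pointwise and respecting accumulation onto $\partial_f(G_v)$. Making such a choice at every vertex, moving away from the root, I assemble a bijection $\Phi\colon\delta(\Gamma_f^K)\to\delta(\Gamma_f^G)$ that is the identity on each copy of $\partial_f(A)$ and is the induced map $\partial T_K\to\partial T_G$ on the tree boundary; the latter is a bijection because a geodesic ray of $T_K$ is determined by its subsequence of $A$-vertices.

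Finally, I verify that $\Phi$ is a homeomorphism exactly as in the proof of Lemma \ref{hom}: both spaces are compact metrizable, so it suffices to check continuity against the bases of $\mathcal{T}_o$. For $\xi\in\delta_{Stab}$ one compares the neighborhoods $V_U(\xi)$, using Remark \ref{rem1}(2) to pass between the matched branch-index sets; for $\eta\in\partial T$ one compares the cylinder neighborhoods $V_n(\eta)$, which correspond under the contraction up to a reindexing of the depth. I expect the main obstacle to be precisely this local matching of the branch-direction index sets: reconciling the multiplicity $|F|-1$ (and the single ``used'' parent direction) with one copy of the vertex set of $\Gamma_A$, coherently with accumulation onto $\partial_f(A)$. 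This is exactly the reorganization that Lemma \ref{main topo lemma}, its $n$-fold generalization, and Remark \ref{rem1} are designed to perform, so once the local model is in place the global assembly and the continuity check are routine.
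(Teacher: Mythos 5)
Your proposal is correct and takes essentially the same approach as the paper: both identify the two boundaries with $\delta(\Gamma_f)$ via Proposition \ref{p2}, use Lemma \ref{main topo lemma}/Corollary \ref{topo1} (with the gluing of $|F|-1$ copies along $\partial_f(A)$) to absorb the branch multiplicity $|F|-1$ at each $A$-vertex, and verify continuity against the $\mathcal{T}_o$-neighborhood bases before concluding by compactness. The paper simply encodes your root-outward assembly of the contracted tree in a single explicit formula on reduced words, $a_1h_{i_1}a_2h_{i_2}\cdots \mapsto \varphi_{i_1}(a_1)\varphi'_{i_2}(a_2)\cdots$, alternating the two free factors of $A\ast A$.
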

\begin{proof}
	Assume that $F=\{e,h_1,h_2,\dots h_{n}\}$. From Lemma \ref{main topo lemma}, we have $A_i$ such that $A= \sqcup_{i=1}^n A_i$, where $1\in A_1$ and, for $1\leq i\leq n$, there exists a bijection $\varphi_i: A \rightarrow A_i$ which extends to a homeomorphism $\bar{\varphi}_i: \overline{A} \rightarrow 
	\overline{A}_i$ such that $\varphi_1(1)=1$. For the sake of distinction, let $G=A*A'$ where $A'=A$. Let $\varphi_{i}': A \rightarrow A_i'$ which extends to homeomorphisms $\bar{\varphi'}_{i}: \overline{A} \rightarrow \overline{A'}_i$.
	We denote the scaled Cayley graphs of $K$ and $G$ by $\Gamma_f$ and $\Gamma'_f$ and Bass--Serre trees by $T_1$ and $T_2$, respectively. Let $V$ be the set of vertices of $T_1$ that represents the cosets of $A$ and $V(T_2)$ is the vertex set of $T_2$. From discussions in Section \ref{3}, we know that it is sufficient to produce a homeomorphism between $\delta(\Gamma_f)$ and $\delta(\Gamma_f')$.
	
	Since $F$ is a finite group, $\partial_f (F)$ is empty. This implies that for every $v \in T_1$ representing a coset of $F$, $\partial_f (\Gamma_v^f)$ is empty. Progressing along the lines of the proof of Theorem \ref{homeo free product}, we will define a homeomorphism $\phi: V \cup \partial T_1 \rightarrow V(T_2)\cup \partial T_2$, where the domain and the codomain have the subspace topology induced from $\overline{T}_1$ and $\overline{T}_2$. Let $v \in V$ represents $gA$. Then $g$ has a unique reduced form as discussed in Section \ref{4} such that  $g=a_1h_{i_1}a_2h_{i_2}\dots  a_{n}h_{i_k}$. Let $m$ denotes a natural number.
	Define a map $\iota: V \rightarrow V(T_2)$ as follows:  
	\[\iota(gA):= \begin{cases}
		\varphi_{i_1}(a_1)\varphi'_{i_2}(a_2)\dots\varphi'_{i_{2m-2}}(a_{2m-2}) \varphi_{i_{2m-1}}(a_{2m-1})A' & \text{if } g\notin A, \hspace{0.15cm} k=2m-1, \\ 
		\varphi_{i_1}(a_1)\varphi'_{i_2}(a_2)\dots \varphi_{i_{2m-1}}(a_{2m-1})\varphi'_{i_{2m}}(a_{2m})A & \text{if } g\notin A, \hspace{0.15cm} k=2m, \\
		A & \text{ if } g=e_K.
	\end{cases} 
	\]
	This map induces the required homeomorphism $\phi$. As discussed in the proof of Theorem \ref{homeo free product}, for every element in $\delta_{Stab}(\Gamma_f)$, there exists a unique reduced representative $[g,\xi]$ where $g=a_1h_{i_1}a_2h_{i_2}\dots  a_{n}h_{i_k}$.
	
	Define the map $F : \partial_f (A*F)\to \partial_f(A*F)$ in the following manner:
	\begin{enumerate}
		\item \[
		F([g,\xi]):=\begin{cases} [\varphi_{i_1}(a_1)\varphi'_{i_2}(a_2)\dots \varphi_{i_{2m-1}}(a_{2m-1}) ,\xi] & \text{ if } g\notin A, \hspace{0.15cm} k=2m-1, \\
			[\varphi_{i_1}(a_1)\varphi'_{i_2}(a_2)\dots \varphi_{i_{2m-1}}(a_{2m-1})\varphi'_{i_{2m}}(a_{2m}),\xi] & \text{ if }  g\notin A, \hspace{0.15cm} k=2m, \\
			[e_K,\xi] & \text{ if } g=e_K.
		\end{cases}
		\]
		\item $F(\eta):= \phi(\eta) \text{ where } \eta \in \partial K.$
	\end{enumerate}
	From Remark \ref{rem1}(2) and Corollary \ref{main cor}, we can deduce that $F$ is continuous at every $\xi \in \delta_{Stab}(\Gamma_f)$. It is straightforward to check that $F$ is also continuous at every point in $\partial T_1$. Hence, we conclude that $F$ is continuous. Since $\overline{\Gamma}_f$ is compact and $F$ is a bijection, $F$ is also a homeomorphism. Hence, we conclude the proof.
\end{proof}
\section{Homeomorphism types of Floyd boundaries of amalgamated free product and HNN extension}\label{5}
So far, we have proved that homeomorphism types of Floyd boundaries of free products of infinite groups depends only on homeomorphism types of Floyd boundary of free factors. Now, we proceed to prove similar results for groups admitting a decomposition as amalgamted free products and HNN extension over finite groups. We fix a Floyd function $f$ for this section.
\subsection{Amalgam case}
The main result of this subsection is the following:
\begin{theorem}\label{amalgam}
	Suppose $A$ is an infinite group and $K =A*_{H}B$, $G=A*B$. Then, we have the following: 
	\begin{enumerate}
		\item $\partial_f(K) \simeq \partial_f(G), \text{ if } B \text{ is also infinite},$
		\item $\partial_f(K) \simeq \partial_f (A*A), \text{ if } B \text{ is finite and } B \neq H,$
		\item $\partial_f (B*_H C) $ is homeomorphic to the Cantor set if $B,C$ are non-trivial finite groups such that $B\ast_H C$ has infinitely many ends.
	\end{enumerate}
\end{theorem}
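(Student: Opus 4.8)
The plan is to prove all three statements through the identification $\partial_f(\cdot)\cong\delta(\Gamma_f)=\delta_{Stab}(\Gamma_f)\sqcup\partial T$ established in Proposition \ref{p2}, so that in each case it suffices to understand the topology of $\delta(\Gamma_f)$ built from the Bass--Serre tree and the stabilizer boundaries. Throughout I keep the neighbourhood description of $\mathcal T_o$ together with Proposition \ref{eq1}, which lets me certify that a bijection between the relevant $\delta(\Gamma_f)$'s is a homeomorphism by checking continuity on the compact space $\overline{\Gamma}_f$, exactly as in Lemma \ref{hom}.

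I would dispose of (3) first, since it is purely tree-theoretic. When $B$ and $C$ are finite, $\partial_f(B)=\partial_f(C)=\varnothing$, hence every vertex stabilizer is finite and $\delta_{Stab}(\Gamma_f)=\varnothing$; thus $\partial_f(B\ast_H C)\cong\partial T$ with the end topology coming from the neighbourhood basis (2) of Subsection \ref{amalgam topo}. The tree $T$ is biregular with finite valencies $[B:H]$ and $[C:H]$, so $\partial T$ is nonempty, compact, totally disconnected and metrizable. The hypothesis that $B\ast_H C$ has infinitely many ends forces $([B:H]-1)([C:H]-1)\geq 2$, so at least one valency is $\geq 3$; since the group acts transitively on each of the two vertex types, branching occurs along every ray and $\partial T$ has no isolated points. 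A nonempty, perfect, compact, totally disconnected metrizable space is the Cantor set, which gives (3).

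For (1) and (2) I would construct an explicit homeomorphism $F\colon\delta(\Gamma_f^K)\to\delta(\Gamma_f^G)$, combining a matching of Bass--Serre trees with the boundary identifications, in the manner of Theorem \ref{homeo free product} and Proposition \ref{finite amalgam}. The one structural difference is bookkeeping: in the amalgam $K=A\ast_H B$ the edges issuing from an $A$-vertex are indexed by the cosets $A/H$ while the normal form of a group element terminates in an $H$-overflow, whereas in the free product $G$ each $A$-vertex spends a full copy of $A$. Since $A\cong (A/H)\times H$ as sets, I would reconcile the two by splitting $\partial_f(A)$ --- and, in case (1), $\partial_f(B)$ --- into $|H|$ boundary-preserving pieces using Lemma \ref{main topo lemma} and Remark \ref{rem1}(1), letting the piece-index carry the $H$-factor; for a point of $\delta_{Stab}$ the terminal overflow is recorded in the split-piece of its final boundary coordinate, while for an end $\eta\in\partial T$ the overflow is merely passed to the next level and never accumulates. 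Because $A$ (and in case (1), $B$) is infinite, the induced map on reduced representatives is then a bijection compatible with the bipartite tree structure. In case (2) the factor $B$ is finite, so $\partial_f(B)=\varnothing$ and only $A$-vertices carry boundary; the construction follows the same scheme as Proposition \ref{finite amalgam}, the additional finite data (the $B/H$-branching together with the $H$-overflow) being folded entirely into the split of $\partial_f(A)$, and yields $\partial_f(A\ast_H B)\cong\partial_f(A\ast A)$ directly. In both cases continuity of $F$ is verified exactly as in Lemma \ref{hom}: at points of $\delta_{Stab}$ using Corollary \ref{main cor} and Remark \ref{rem1}(2), and at ends using the matching of the basic neighbourhoods $V_n(\eta)$; a continuous bijection between the compact Hausdorff spaces $\overline{\Gamma}_f^K$ and $\overline{\Gamma}_f^G$ is automatically a homeomorphism.

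The step I expect to be the main obstacle is making the $|H|$-fold splittings cohere across all the (infinitely many) vertices of the tree, so that the individual vertex-level re-coordinatizations assemble into a single well-defined bijection of reduced representatives, and then checking that this bijection extends continuously to the ends $\eta\in\partial T$, where all levels interact simultaneously. This is precisely the point at which the coset-versus-element indexing could obstruct continuity at the boundary; it is controlled only because the homeomorphisms produced by Lemma \ref{main topo lemma} restrict to the identity on $\partial X$, and because the edge groups, being finite, have bounded Floyd length and are therefore invisible at the scale of $\partial T$ --- the same mechanism already exploited in Lemma \ref{homeo identification} and Proposition \ref{finite amalgam}.
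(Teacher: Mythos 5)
Your overall scheme coincides with the paper's: reduce everything to $\delta(\Gamma_f)=\delta_{Stab}(\Gamma_f)\sqcup\partial T$ via Proposition \ref{p2}; settle (3) by observing that $\delta_{Stab}(\Gamma_f)$ is empty and $\partial_f(B\ast_H C)\simeq\partial T$ for a bounded-valence tree (your perfectness check via $([B:H]-1)([C:H]-1)\geq 2$ is actually more explicit than the paper's one-line justification); and prove (1) and (2) by a tree isomorphism together with a bijection $F$ on $\delta_{Stab}\sqcup\partial T$, with continuity at stabilizer points from Corollary \ref{main cor} and Remark \ref{rem1}(2), and with the inverse handled by compactness. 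The one point of divergence is the bookkeeping, and there the paper's device is simpler than yours and dissolves your self-identified main obstacle. Instead of splitting $A$ into $|H|$ pieces and carrying an $H$-overflow from letter to letter, the paper fixes transversals $A_1\subset A$ and $B_1\subset B$ for the $H$-cosets (containing $1$) and applies Corollary \ref{topo1} to get bijections $\varphi:A\to A_1$, $\psi:B\to B_1$ with $\varphi(1)=\psi(1)=1$ that extend to homeomorphisms restricting to the identity on $\partial_f A$ and $\partial_f B$. Since every edge and vertex of the Bass--Serre tree of $A\ast_H B$, and likewise every reduced representative $[g,\xi]\in\delta_{Stab}(\Gamma_f)$, has a \emph{unique} expression as an alternating word in $(A_1\setminus\{1\})\cup(B_1\setminus\{1\})$ --- the trailing element of $H$ is quotiented away at the level of cosets, and for $\delta_{Stab}$ it is absorbed into the choice of representative of the class $[g,\xi]$ --- the letterwise map $a_1b_1\cdots a_nb_n\mapsto\varphi(a_1)\psi(b_1)\cdots\varphi(a_n)\psi(b_n)$ is already a well-defined tree isomorphism, and no overflow is ever generated; in particular nothing has to ``pass to the next level'' at an end $\eta\in\partial T$, so the coherence problem you anticipate simply does not arise. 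Your splitting idea does appear in the paper, but only locally: the decomposition $A=\bigsqcup_k A_1h_k$ and the maps $a\mapsto\varphi(a)h_k$ enter in the continuity check at points of $\delta_{Stab}(\Gamma_f)$ via Remark \ref{rem1}(2), where they are harmless, rather than globally in the normal form. Finally, for (2) the paper is slightly less direct than your one-step folding: it first shows $\partial_f(A\ast_H B)\simeq\partial_f(A\ast\mathbb{Z}_m)$ with $m=[B:H]$ by the same transversal trick, and then invokes Proposition \ref{finite amalgam} to conclude $\partial_f(A\ast\mathbb{Z}_m)\simeq\partial_f(A\ast A)$; your direct route should also work, but factoring through $A\ast\mathbb{Z}_m$ lets the finite-factor analysis be done once and reused.
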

\begin{proof}
	Denote the scaled Cayley graph of $G$ and $K$ by $\Gamma_f$ and $\Gamma'_f$, and Bass--Serre trees by $T$ and $T'$, respectively.
	Let $\tau$ and $\tau'$ be the edges joining $A$ and $B$ in $T$ and $T'$, respectively. Let $A_1\subset A$, $B_1 \subset B$ be subsets that contain exactly one coset representative for each coset of $H$ in $A$ and $B$, respectively. We also assume that identity is contained in both of them. For each edge $g\tau'\in T'$, we have the unique reduced form of $g$ such that $g=a'_1b'_1a'_2b'_2\dots a'_n b'_n$, where $a'_j \in A_1-\{1\}$ and $b'_j \in B_1-\{1\}$, allowing $a'_1 =1$ and $b'_n=1$.

	{\bf Case 1.} If $B$ is infinite, then $\partial_f B$ is not empty.  It is easy to check that $A_1$ and $B_1$ are countable, dense subsets of $A \cup \partial_f A$ and $B \cup \partial_f B$, respectively. By applying Corollary \ref{topo1}, we get bijections $\varphi: A \rightarrow A_1$ and $\psi: B \rightarrow B_1$ with $\varphi(1)=1$ and $\psi(1)=1$, that extend to homeomorphisms $\varphi \cup \text{Id}: A \cup \partial_f A \rightarrow A_1 \cup \partial_f A$ and $\psi \cup \text{Id}: B\cup \partial_f B \rightarrow B_1 \cup \partial_f B$.
	
	Define $\iota: T \rightarrow T'$, such that \[\iota(a_1b_1a_2b_2\dots a_n b_n \tau)=\varphi(a_1)\psi(b_1)\varphi(a_2)\psi(_2)\dots \varphi(a_n)\psi(b_n)\tau',\] where $a_1b_1a_2b_2\dots a_nb_n$ is the reduced form of an element in $G$. The map $\iota$ is a graph isomorphism inducing homeomorphism $\bar{\iota}: \overline{T} \rightarrow \overline{T'}$. We will define a map $F: \delta(\Gamma_f) \rightarrow \delta(\Gamma'_f)$ as follows:
	\begin{enumerate}
		\item Following the discussion in the proof of Theorem \ref{homeo free product}, for every element in $\delta_{Stab}(\Gamma_f)$, we choose the reduced representative $[g, \xi]$ with $g$ having the reduced form $ a_1b_1\dots a_nb_n$. Define
		\[
		F([g,\xi]):= [\varphi(a_1)\psi(b_1)\varphi(a_2)\psi(b_2)\dots \varphi(a_n)\psi(b_n),\xi].
		\]
		\item For $\eta \in \partial T$, define $F(\eta):=\bar{\iota}(\eta).$
	\end{enumerate}
	Assume that $H=\{e, h_1,h_2, \dots,h_l\}$.
	For $1\leq k\leq l$, let $\varphi_k: A \rightarrow A_1h_k $ be the maps that take $a$ to $\varphi(a)h_k.$ Since $A= A_1 \sqcup A_1h_k$, by Remark \ref{rem1}(2), we can show that $F$ is continuous at every point of $\partial_f(G_v)$, for each vertex $v \in T$ that represents a coset of A. Applying a similar argument for $B$ gives the conclusion that $F$ is continuous at every point of $\delta_{Stab}(\Gamma_f)$. Since  $\overline{\iota}$ is a homeomorphism, it is easy to see that $F$ is also continuous at every point of $\partial T$. Since $F$ is a continuous bijection and $\overline{\Gamma}_f$ is compact, $F$  is a homeomorphism.
	
	{\bf Case 2.} If $B$ is a finite group then $\partial_f B$ is empty. Let $m$ be the index of $H$ in $B$. We show that $ \partial_f(A*\mathbb{Z}_m)\simeq \partial_f(K).$ We will use the map $\varphi$ defined in the last case. Let $\psi_1: \mathbb{Z}_m \rightarrow B_1$ be a bijection with $\psi_1(1)=1$. Let $\tau_1$ be the edge joining $A$ and $\mathbb{Z}_m$ in the Bass--Serre tree $T_1$ of $A*\mathbb{Z}_m$. Define $\iota_1: T_1 \rightarrow T'$, that takes the edge $g\tau_1$ to $\varphi(a_1)\psi_1(b_1)\varphi(a_2)\psi_1(b_2)\dots \varphi(a_n)\psi_1(b_n)\tau'$, where $a_1b_1a_2b_2\dots a_nb_n$ is the reduced form of $g \in A*\mathbb{Z}_m$. Note that $\iota_1$ is an isomorphism and hence it induces a homeomorphism $\overline{\iota}_1: \overline{T}_1\rightarrow \overline{T'}$. Now, $\bar{\iota}_1$ can be used to define a homeomorphism between $\partial_f(K)$ and $\partial_f(A*\mathbb{Z}_m)$ as we did in the last case. Finally, by Proposition \ref{finite amalgam}, we get that 
	\[
	\partial_f{K} \simeq \partial_f(A*\mathbb{Z}_m) \simeq \partial_f(A*A),
	\] 
	which completes the proof.
	
	{\bf Case 3.} Let $L:= B*_H C$. We denote the Bass--Serre tree and Cayley Graph of $L$ by $T''$ and $\Gamma_f''$, respectively. From Section \ref{3}, we know that $\partial_f(L) \simeq \delta_{Stab}\Gamma_f'' \cup \partial T''$. Since, $B$ and $C$ are finite groups, $\partial_f(B)$ and $\partial_f(C)$ are empty, which implies that $\delta_{Stab}(\Gamma_f'')$ is empty. Hence, $\partial_f(L) \simeq \partial T''$. Since the degree of the vertices of $T''$ are bounded above by a fixed natural number, $\partial T''$ is homeomorphic to the Cantor set.
\end{proof}
\begin{rem}
	If $B\ast_H C$, in Theorem \ref{amalgam}(3), is $2$-ended then $G$ is virtually cyclic and hence $|\partial_f(G)|=2$ (see \cite[Proposition 7]{karlsson}).
\end{rem}
We immediately have the following corollary whose proof is an easy application of induction:
\begin{cor}\label{homeo tree of groups}
	Let $Y$ be a tree with $k>1$ vertices. Let $(\GG,Y)$ be a graph of groups over $Y$ such that each edge group is finite. Suppose $G$ is the fundamental group of $(\GG,Y)$ and $G_1,G_2,\dots,G_k$ are the vertex groups. Then, we have the following:
	\begin{enumerate}
		\item If each $G_i$ is infinite then $\partial_f(G)\simeq\partial_f(G_1\ast\dots\ast G_k)$.
		\item If $G_j$ is infinite for some $j$ and, for each $i\neq j$, $G_i$ is finite. Then, $\partial_f(G)\simeq\partial_f(G_j\ast G_j)$.
		\item By reindexing (if required), suppose there exists $2\leq k'\leq k$ such that $G_i$ is infinite for $1\leq i\leq k'$ and $G_i$ is finite for $i>k'$. Then, $\partial_f(G)\simeq\partial_f(G_1\ast\dots\ast G_{k'})$.
		\item If $G_i$'s are finite and $G$ is infinite-ended then $\partial_f(G)$ is homeomorphic to the Canter set.
	\end{enumerate} 
\qed
\end{cor}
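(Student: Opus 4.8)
The plan is to induct on the number $k$ of vertices of the tree $Y$, peeling off one leaf at each stage and applying Theorem \ref{amalgam}. Before starting I would normalize the decomposition to be \emph{reduced}, i.e.\ no edge group coincides with the vertex group at one of its endpoints: an edge of the form $A\ast_H H$ contributes nothing and can be collapsed without changing $G$, and such a collapse simply deletes a vertex of $Y$. This reduction guarantees that whenever I strip a finite leaf the hypothesis $B\neq H$ of Theorem \ref{amalgam}(2) is satisfied. The base case $k=2$ is exactly Theorem \ref{amalgam}: its part (1) gives (1), part (2) gives (2), part (3) gives (4), and (3) with $k=2$ is a special case of (1).

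For the inductive step, fix a leaf $v$ with vertex group $G_v$, joined to the rest of $Y$ by an edge with finite group $H$, and let $G'$ be the fundamental group of the graph of groups over $Y\setminus\{v\}$, so that $G=G'\ast_H G_v$ and $Y\setminus\{v\}$ has $k-1$ vertices. I would choose the leaf according to the case. In part (1) every vertex group is infinite, so $G'$ is infinite and Theorem \ref{amalgam}(1) gives $\partial_f(G)\simeq\partial_f(G'\ast G_v)$; the induction hypothesis yields $\partial_f(G')\simeq\partial_f(G_1\ast\cdots\ast G_{k-1})$, and Theorem \ref{homeo free product} together with Corollary \ref{gen free product} propagates this homeomorphism across the extra free factor $G_v$ to give $\partial_f(G)\simeq\partial_f(G_1\ast\cdots\ast G_k)$. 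In the mixed cases (2) and (3), I would strip a finite leaf whenever one is available: then $G_v$ is finite with $G_v\neq H$, Theorem \ref{amalgam}(2) gives $\partial_f(G)\simeq\partial_f(G'\ast G')$, and the induction hypothesis identifies $\partial_f(G')$; if instead every leaf is infinite, I strip an infinite leaf and argue as in part (1). Pushing the induction hypothesis through the free product $G'\ast G'$ via Theorem \ref{homeo free product} then exhibits $\partial_f(G)$ as the Floyd boundary of a free product of several copies of the infinite vertex groups.

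The crux is to collapse these repeated factors down to the canonical model, and here I would invoke Theorem \ref{main theorem}: since the Floyd boundary of a free product (with at least one non-elementary factor) depends only on the \emph{set} of homeomorphism types of its non-elementary free factors, the doubled free products produced by Theorem \ref{amalgam}(2) have the same boundary as a single copy. Concretely, in part (3) the iterated doubling yields a free product whose non-elementary factors realize exactly $\{h(G_1),\ldots,h(G_{k'})\}$, so Theorem \ref{main theorem} identifies its boundary with $\partial_f(G_1\ast\cdots\ast G_{k'})$; in part (2) the single infinite group $G_j$ is doubled to some $G_j^{\ast 2m}$, whose set of non-elementary types is $\{h(G_j)\}$, matching that of $G_j\ast G_j$, so the two boundaries agree. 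The purely elementary sub-cases (where the only infinite groups are virtually cyclic, in particular all of part (4)) lie outside Theorem \ref{main theorem} and must be handled separately: there $\delta_{Stab}(\Gamma_f)$ is empty (or contributes nothing new), the boundary reduces to $\partial T$ for a bounded-valence tree with infinitely many ends, and Theorem \ref{amalgam}(3) identifies it with the Cantor set, which is simultaneously $\partial_f(G_j\ast G_j)$ and $\partial_f(G_1\ast\cdots\ast G_{k'})$ in those situations.

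I expect the bookkeeping around the doubling in parts (2) and (3) to be the main obstacle. Each application of Theorem \ref{amalgam}(2) replaces $G'$ by $G'\ast G'$, so a naive induction produces free products with geometrically growing multiplicities, and the entire argument hinges on the set-invariance of Theorem \ref{main theorem} to see that only the underlying set of non-elementary homeomorphism types survives (equivalently, this collapse can be extracted directly from the splitting Lemma \ref{main topo lemma} and Remark \ref{rem1}). Arranging the branching between ``strip a finite leaf'' and ``strip an infinite leaf'' so that it interacts cleanly with the elementary-versus-non-elementary dichotomy is the delicate point; once set-invariance is available, the remaining reductions are routine.
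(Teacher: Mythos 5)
Your induction scheme (normalize to a reduced decomposition, strip leaves, apply Theorem \ref{amalgam}, propagate across free factors via Theorem \ref{homeo free product} and Corollary \ref{gen free product}) is the right skeleton, and you correctly isolated the real difficulty: collapsing the doubled free products that Theorem \ref{amalgam}(2) produces. The paper itself offers no proof beyond the phrase ``an easy application of induction,'' and that phrase hides exactly this point. However, your choice of collapsing tool makes the argument circular: in this paper, Theorem \ref{main theorem} is proved using Corollary \ref{main cor3}, and Corollary \ref{main cor3} is explicitly deduced from Corollary \ref{homeo tree of groups}(3) --- the statement you are proving. The non-circular tools are Theorem \ref{homeo three product} and its consequences, Corollary \ref{homeo four} and Corollary \ref{same group homeo}: their proofs in Section \ref{6} use only Lemma \ref{main lemma}, Lemma \ref{main topo lemma}, Corollary \ref{main cor} and the Bass--Serre tree isomorphism construction, never Corollary \ref{homeo tree of groups}, so citing them forward is legitimate. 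They give precisely $\partial_f(A^{\ast n})\simeq\partial_f(A\ast A)$ and the absorption of repeated factors, and --- unlike Theorem \ref{main theorem}, which requires a non-elementary free factor --- they apply whenever the factors are merely infinite, e.g.\ when $G_j$ is infinite virtually cyclic in part (2).

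That elementary sub-case is the second gap. When $G_j$ is infinite virtually cyclic, $\delta_{Stab}(\Gamma_f)$ is \emph{not} empty and does not ``contribute nothing new'': it consists of countably many translates of the two-point set $\partial_f(G_j)$, so the boundary is not $\partial T$, and part (2) asserts a homeomorphism with $\partial_f(G_j\ast G_j)$, not with a Cantor set; the collapse must again come from Corollary \ref{same group homeo}, not from an appeal to Theorem \ref{amalgam}(3). Likewise in part (4), Theorem \ref{amalgam}(3) only covers a tree with two finite vertices; for $k\geq 3$ you must still run the induction (strip a finite leaf, apply Theorem \ref{amalgam}(2) to $G'\ast_H G_v$ with $G'$ infinite but possibly two-ended, then collapse), which is how the paper itself later treats such cases in Corollary \ref{homeo free product cyclic} and Theorem \ref{general case}(1). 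With these substitutions your argument goes through. Note finally that your reduction step is not merely convenient but necessary for the statement to be true at all: without reducedness, $G=A\ast_F F=A$ with $A$ one-ended gives $\partial_f(G)=\partial_f(A)$ connected, whereas $\partial_f(A\ast A)$ is disconnected, so part (2) fails for degenerate splittings.
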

\subsection{HNN extension Case}
This subsection is devoted to prove the following theorem:
\begin{theorem}\label{hnn}
	Let $G=A*_{F\simeq F'}$, where $F$ and $F'$ are finite subgroups. Then, we have the following:
	\begin{enumerate}
		\item If $A$ is an infinite group then $\partial_f(G)\simeq \partial_f(A*A)$.
		\item If $A$ is finite and either $F$ or $F'$ is not equal to $A$ then $\partial_f(G)$ is homeomorphic to the Cantor set.
	\end{enumerate}
\end{theorem}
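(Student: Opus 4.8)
The plan is to treat the two parts separately, in each case starting from the ``tree of Floyd boundaries'' description $\partial_f(G)\simeq\delta(\Gamma_f)=\delta_{Stab}(\Gamma_f)\sqcup\partial T$ established in Subsection~\ref{hnn construnction}.

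Part (2) is the easy case and parallels Case~3 of Theorem~\ref{amalgam}. Since $A$ is finite, $\partial_f(A)=\emptyset$, so $\partial_f(\Gamma_v^f)=\emptyset$ for every vertex $v\in T$ and hence $\delta_{Stab}(\Gamma_f)=\emptyset$; thus $\partial_f(G)\simeq\partial T$. It remains to identify $\partial T$. Because $A,F,F'$ are finite, $T$ is locally finite, and by vertex-transitivity of the $G$-action it is regular of degree $[A:F]+[A:F']$. The hypothesis that $F\neq A$ or $F'\neq A$ forces one of these two indices to be at least $2$ while the other is at least $1$, so the degree is at least $3$. A locally finite tree in which every vertex has degree at least $3$ has boundary that is compact, metrizable, totally disconnected and perfect, hence homeomorphic to the Cantor set. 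This proves (2).

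For part (1) the strategy is to mimic Case~1 of Theorem~\ref{amalgam} and construct an explicit homeomorphism $\delta(\Gamma_f')\to\delta(\Gamma_f)$, where $\Gamma_f'$ and $\Gamma_f$ are the scaled Cayley graphs of $A*A$ and of $G$. Fix transversals $C_F,C_{F'}\subset A$ of $F,F'$ containing $1$; as in the amalgam case, $C_F\cup\partial_f(A)$ and $C_{F'}\cup\partial_f(A)$ are compact with the transversal as a dense set of isolated points, so Corollary~\ref{topo1} supplies bijections $A\to C_F$ and $A\to C_{F'}$ extending to homeomorphisms that are the identity on $\partial_f(A)$. The key structural difference from the amalgam is that the Bass--Serre tree $T$ of an HNN extension is \emph{not} bipartite into two vertex types: reading off a reduced word via Britton's lemma, each vertex $gA$ has an \emph{ascending} family of incident edges indexed by $C_F$ (feet lying in the cosets $gC_F$) and a \emph{descending} family indexed by $C_{F'}$ (feet in $gC_{F'}$), both landing on vertices carrying the same boundary $\partial_f(A)$. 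In the free product $A*A$, by contrast, each vertex carries a single edge-bundle indexed by all of $A$. To reconcile the two pictures I would use Lemma~\ref{main topo lemma} together with Remark~\ref{rem1}(1) to split each free-product bundle $A=A'\sqcup A''$ so that $A'\cup\partial_f(A)$ and $A''\cup\partial_f(A)$ are each homeomorphic to $\overline{\Gamma}_A^f$ via maps fixing $\partial_f(A)$; composing with the transversal bijections identifies $A'$ with the ascending bundle $C_F$ and $A''$ with the descending bundle $C_{F'}$.

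With these bundle identifications in hand, I would define a vertex bijection $\iota\colon V(T_2)\to V(T)$ recursively along reduced words, respecting the pinch (reducedness) conditions so that reduced words map to reduced words, thereby inducing a homeomorphism $\overline{T}_2\to\overline{T}$. I would then define $F\colon\delta(\Gamma_f')\to\delta(\Gamma_f)$ so that on $\delta_{Stab}$ it sends a reduced representative $[g,\xi]$ to $[\iota(g),\xi]$ (using the canonical identification $\partial_f(G_v)\simeq\partial_f(A)$ of Subsection~\ref{hnn construnction}), and on $\partial T_2$ it agrees with the boundary extension of $\iota$. Continuity of $F$ at points of $\delta_{Stab}(\Gamma_f)$ should follow from Corollary~\ref{main cor} and Remark~\ref{rem1}(2), using the finite decompositions $A=\bigsqcup_{f\in F}C_F f$ and $A=\bigsqcup_{f'\in F'}C_{F'}f'$ exactly as the amalgam proof used the cosets of $H$; continuity at $\partial T_2$ is immediate since $\overline{\iota}$ is a homeomorphism. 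As $\overline{\Gamma}_f$ is compact and $F$ is a continuous bijection, it is a homeomorphism. I expect the main obstacle to be precisely the non-bipartite, two-bundle nature of the HNN tree: book-keeping the ascending/descending split through the Britton normal form while keeping $\iota$ a well-defined bijection on reduced words is the step where the argument genuinely departs from Theorem~\ref{amalgam}, and it is where Lemma~\ref{main topo lemma} does the essential work of matching a single free-product bundle to the two HNN bundles.
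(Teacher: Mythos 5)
Your proposal is correct and follows essentially the same route as the paper's proof: part (2) is verbatim the paper's argument ($\delta_{Stab}(\Gamma_f)=\emptyset$, so $\partial_f(G)\simeq\partial T$, a regular tree of degree at least $3$), and for part (1) the paper likewise composes the transversal bijections from Corollary \ref{topo1} with the splitting $A=B_1\sqcup B_2$ from Lemma \ref{main topo lemma} to match the two HNN edge-bundles (indexed by coset representatives of $F$ and $F'$) with the single free-product bundle, builds a tree isomorphism along Britton normal forms, and checks continuity of the induced boundary map via Remark \ref{rem1}(2) before invoking compactness. The only difference is cosmetic: you run the homeomorphism from $\delta(\Gamma_f')$ to $\delta(\Gamma_f)$, while the paper writes the explicit formulas in the opposite direction.
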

\begin{proof}
	(1) For the sake of distinction, let $K=A*A'$ where $A'$ is an isomorphic copy of $A$. Let $S_A$ be a generating set for $A$. We denote the Bass--Serre tree and Cayley graph with respect to generating set $S_A \cup S_A'$ of $K$ by $T'$ and $\Gamma'_f$, respectively. From the discussion in Subsection \ref{hnn construnction}, it is sufficient to produce a homeomorphism between $\delta(\Gamma_f)$ and $\delta(\Gamma'_f)$. Let $A_1$ and $A_2$ be the set of left coset representatives of $F$ and $F'$ in $A$, respectively, such that $1 \in A_1\cap A_2$. For every coset of $A$ in $G$, there exists a unique reduced representative $g$ of that coset of the form $g=a_0t^{\varepsilon_0}a_1t^{\varepsilon_1}\dots a_{n}t^{\varepsilon_{n}}$, $\varepsilon_i=\pm 1$, $a_i \in A_k$ when $\varepsilon_i= (-1)^{(k+1)}$, where $k\in\{1,2\}$, and for any $i\geq 1$, $\varepsilon_{i-1}= -\varepsilon_{i}$, $a_i=1$ is not possible. 
	
	For this proof, we will denote the identity map on $\partial_f(A)$ by Id.
	From Lemma $\ref{main topo lemma}$, we have disjoint subsets $B_1,B_2 \subset A$ such that $A=B_1\sqcup B_2$, with bijections $\psi_i: A \rightarrow B_i$ such that $\psi_i \cup \text{Id}$ are homeomorphisms. Since $A_i$'s are dense in $A_i \cup \partial_f(A)$, there exist bijections $\alpha_i: A_i \rightarrow A$ such that $\alpha_i \cup \text{Id}$ are homeomorphisms (see Corollary \ref{topo1}). Define a map $\varphi_i: A_i \rightarrow B_i$ as $\varphi_i:= \psi_i\circ\alpha_i$. Similarly, define the maps $\varphi'_i: A_i \to B'_i$, where $B_i'$'s are the isometric copy of $B_i$ in $A'$.  By construction, we conclude that $\varphi_i \cup \text{Id}$ and $\varphi_i'\cup\text{Id}$ are homeomorphisms.
	
	Firstly, we define a graph isomorphism $\iota: T \to T'$ as follows:
	\[
	\iota(gA):= \begin{cases}
		\varphi'_{\varepsilon_0}(a_0)\varphi_{\varepsilon_1}(a_1)\dots \varphi'_{\varepsilon_n}(a_n)A & \text{ if } n \text{ is even,}
		\\
		\varphi'_{\varepsilon_0}(a_0)\varphi_{\varepsilon_1}(a_1)\dots \varphi_{\varepsilon_n}(a_n)A' & \text{ if } n \text{ is odd,}
		\\ A  & \text{ if } g=e_G.
	\end{cases}
	\] which induces the homeomorphism $\bar{\iota}: \overline{T} \to \overline{T'}$. Define the map $F: \delta(\Gamma_f) \to \delta(\Gamma'_f)$ as follows:
	\begin{itemize}
		\item Let $\xi \in \partial_f(\Gamma^f_{gA})$, then define 
		\[
		F([g,\xi]):= \begin{cases}
			[\varphi'_{\varepsilon_0}(a_0)\varphi_{\varepsilon_1}(a_1)\dots \varphi'_{\varepsilon_n}(a_n),\xi] & \text{ if } n \text{ is even,}
			\\
			[\varphi'_{\varepsilon_0}(a_0)\varphi_{\varepsilon_1}(a_1)\dots \varphi_{\varepsilon_n}(a_n),\xi] & \text{ if } n \text{ is odd}
			\\
			[e_G, \xi] & \text{ if } g= e_G.
		\end{cases} 
		\]
		\item If $\eta \in \partial T$, then define 
		$F(\eta):= \bar{\iota}(\eta)$.
		
	\end{itemize}
	Using Remark \ref{rem1}(2), we can show that $F$ is continuous. Since $\delta(\Gamma_f)$ is compact and $F$ is a bijection, we conclude that $F$ is a homeomorphism.
	
	(2) Since $A$ is finite, $\delta_{Stab}(\Gamma_f)$ is empty. Hence, $\partial_f(G)\simeq \partial T$. However, as $T$ is $k$-regular where $k\geq 3$, $\partial T$ is homeomorphic to the Cantor set. 
\end{proof}
\begin{rem}
	In Theorem \ref{hnn}(2), if $A=F$ then $A=F'$ too as $A$ is finite. In this case, $G$ is virtually cyclic and hence $|\partial_f(G)|=2$.
\end{rem}
\begin{cor}\label{homeo free product cyclic}
	For $n\geq 2$, let $G=A_1\ast\dots\ast A_n$ be a free product of elementary groups such that at least one free factor is infinite. Then, the Floyd boundary of $G$ is homeomorphic to the Cantor set.
\end{cor}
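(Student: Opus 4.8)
The plan is to reduce $G$, through the structural results already established, to a free product of finitely many copies of $\mathbb{Z}_2$ and then invoke the case in which all vertex groups are finite.

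First I would record the two elementary facts that drive everything. A finitely generated elementary group is virtually cyclic, so each $A_i$ is either finite, in which case $\partial_f(A_i)=\emptyset$, or infinite and two-ended; in the latter case $|\partial_f(A_i)|=2$ for \emph{every} Floyd function by \cite[Proposition 7]{karlsson}. Since $n\ge 2$ and at least one factor is infinite, $G$ is not virtually cyclic and in fact has infinitely many ends. The crucial observation is that a free product $A_1\ast\dots\ast A_n$ is the fundamental group of a graph of groups over a tree with $n$ vertices and trivial (hence finite) edge groups, so Corollary \ref{homeo tree of groups} applies directly.

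Next I would strip the finite factors. Applying Corollary \ref{homeo tree of groups}: if exactly one factor $A_j$ is infinite then part (2) gives $\partial_f(G)\simeq\partial_f(A_j\ast A_j)$, while if at least two factors are infinite then parts (1) and (3) give $\partial_f(G)\simeq\partial_f(B_1\ast\dots\ast B_m)$, where $B_1,\dots,B_m$ are the infinite factors. In either situation one is left with $\partial_f(G)\simeq\partial_f(B_1\ast\dots\ast B_m)$ where every $B_i$ is infinite and two-ended and $m\ge 2$.

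Finally I would homogenize the factors. Each $B_i$ is infinite with $|\partial_f(B_i)|=2$, and the infinite dihedral group $\mathbb{Z}_2\ast\mathbb{Z}_2$ is likewise infinite with a two-point Floyd boundary, so $\partial_f(B_i)\simeq\partial_f(\mathbb{Z}_2\ast\mathbb{Z}_2)$. Corollary \ref{gen free product}, applied with the single Floyd function $f$ fixed throughout this section, then yields $\partial_f(B_1\ast\dots\ast B_m)\simeq\partial_f\big((\mathbb{Z}_2\ast\mathbb{Z}_2)^{\ast m}\big)=\partial_f(\mathbb{Z}_2^{\ast 2m})$. The group $\mathbb{Z}_2^{\ast 2m}$ is a free product of $2m\ge 4$ finite groups and is therefore infinite-ended, so Corollary \ref{homeo tree of groups}(4) identifies its Floyd boundary with the Cantor set, and combining the homeomorphisms gives the claim. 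The only point requiring care is the order of operations: Corollary \ref{gen free product} demands that \emph{all} free factors be infinite, so the finite factors must be removed first via Corollary \ref{homeo tree of groups}; beyond that, the sole external input is the standard fact that a two-ended group has a two-point Floyd boundary.
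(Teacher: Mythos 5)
Your proof is correct, but its second half follows a genuinely different route from the paper's. Both arguments begin identically: strip the finite free factors via Corollary \ref{homeo tree of groups}(2)--(3), using the fact from \cite[Proposition 7]{karlsson} that an infinite elementary group has a two-point Floyd boundary. The paper then splits into cases. When only one factor $A_1$ is infinite, it converts $\partial_f(A_1\ast A_1)$ to $\partial_f(\mathbb{Z}\ast\mathbb{Z})$ by Theorem \ref{homeo free product}, then to $\partial_f(\mathbb{Z}\ast B)$ with $B$ finite by Proposition \ref{finite amalgam}, rewrites $\mathbb{Z}\ast B$ as the HNN extension $B\ast_{\{1\}}$, and concludes with Theorem \ref{hnn}(2); the case of several infinite factors is then dispatched by a brief induction. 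You instead homogenize: every surviving infinite factor is replaced by a copy of the infinite dihedral group $\mathbb{Z}_2\ast\mathbb{Z}_2$ (legitimate, since any two two-point Hausdorff spaces are homeomorphic and Corollary \ref{gen free product} only requires the factors to be infinite), the result is re-associated as $\mathbb{Z}_2^{\ast 2m}$ with $2m\geq 4$, and Corollary \ref{homeo tree of groups}(4) finishes. Your route is uniform (no case split or induction after the stripping step) and bypasses Proposition \ref{finite amalgam} and Theorem \ref{hnn} altogether, relying instead on Corollary \ref{homeo tree of groups}(4), which the paper derives from Theorem \ref{amalgam}(3). Ultimately both proofs rest on the same geometric fact, namely that the boundary of a Bass--Serre tree with finite stabilizers is a Cantor set; the paper reaches it through the HNN statement, you through the amalgam-of-finite-groups statement.
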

\begin{proof}
	There are two cases to be consider:
	
	{\bf Case 1.} By reindexing, if required, assume that $A_1$ is infinite and $A_i$ is finite for $2\leq i\leq n$. Then, by Corollary \ref{homeo tree of groups}(2), $\partial_f(G)\simeq \partial_f(A_1\ast A_1)$. Since $A_1$ is virtually cyclic, $\partial_f(A_1)\simeq \partial_f(\mathbb Z)$. Thus, by Theorem \ref{homeo free product}, $\partial_f(A\ast A)\simeq \partial_f(\mathbb Z\ast\mathbb Z)$. Let $B$ be a non-trivial finite group. Then, by Proposition \ref{finite amalgam}, $\partial_f(\mathbb Z\ast\mathbb Z)\simeq \partial_f(\mathbb Z\ast B)$. Since $B_{\ast_{\{1\}}}$ is isomorphic to $B\ast\mathbb Z$, their Floyd boundaries are homeomorphic. Now, by Theorem \ref{hnn}(2), $\partial_f(B\ast\mathbb Z)$ is homeomorphic to the Cantor set.
	
	{\bf Case 2.} By reindexing, if necessary, suppose there exists $2\leq k'\leq n$ such that $A_i$ is infinite for $1\leq i\leq k'$ and $A_j$ is finite for $j>k'$. Then, by Corollary \ref{homeo tree of groups}(3), $\partial_f(G)\simeq\partial_f(A_1\ast\dots\ast A_{k'})$. As $A_i$'s are infinite virtually cyclic for $1\leq i\leq k'$, by induction, it is easy to prove that $\partial_f(G)$ is homeomorphic to the Cantor set. 
\end{proof}
We end this subsection by proving the following which remove the dependence of Floyd boundary of free products on virtually cyclic groups.

\begin{cor}\label{remove virtual cyclic}
	For $k\geq 2$, let $G=B_1\ast\dots\ast B_k$ be a free product of infinite groups such that, for $2\leq i\leq k$, $B_i$ is virtually cyclic. Then, $\partial_f(G)$ is homeomorphic to $\partial_f(B_1\ast B_1)$. 
\end{cor}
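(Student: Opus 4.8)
The plan is to reduce $G$ in three stages: first replace all the virtually cyclic factors by copies of $\mathbb{Z}$, then collapse the resulting free factors of $\mathbb{Z}$ down to a single one, and finally convert that last $\mathbb{Z}$ into a second copy of $B_1$ through an HNN extension. Since each $B_i$ with $2\le i\le k$ is infinite virtually cyclic, its Floyd boundary is homeomorphic to $\partial_f(\mathbb{Z})$ (two points, by \cite[Proposition 7]{karlsson}). Hence Corollary \ref{gen free product} (with the factor $B_1$ matched to itself by the identity and each $B_i$, $i\ge2$, matched to $\mathbb{Z}$) gives $\partial_f(G)\simeq\partial_f(B_1\ast\mathbb{Z}^{\ast(k-1)})$, where $\mathbb{Z}^{\ast(k-1)}$ denotes the free product of $k-1$ copies of $\mathbb{Z}$.

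The heart of the argument is the claim that $\partial_f(B_1\ast\mathbb{Z}^{\ast n})\simeq\partial_f(B_1\ast\mathbb{Z})$ for every $n\ge1$, which I would prove by induction on $n$, the case $n=1$ being trivial. For the inductive step ($n\ge2$), I would group the last two cyclic factors and apply Proposition \ref{finite amalgam} with $A=\mathbb{Z}$ to obtain $\partial_f(\mathbb{Z}\ast\mathbb{Z})\simeq\partial_f(\mathbb{Z}\ast F)$ for a nontrivial finite group $F$. Feeding this into Theorem \ref{homeo free product}, with the remaining factor $B_1\ast\mathbb{Z}^{\ast(n-2)}$ held fixed, yields $\partial_f(B_1\ast\mathbb{Z}^{\ast n})\simeq\partial_f(B_1\ast\mathbb{Z}^{\ast(n-1)}\ast F)$. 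Because this last group is a free product with at least two infinite vertex groups (namely $B_1$ together with at least one $\mathbb{Z}$) and a single finite factor $F$, Corollary \ref{homeo tree of groups}(3) discards $F$, so $\partial_f(B_1\ast\mathbb{Z}^{\ast(n-1)}\ast F)\simeq\partial_f(B_1\ast\mathbb{Z}^{\ast(n-1)})$, and the induction hypothesis closes the step. Taking $n=k-1$ reduces $G$ to $B_1\ast\mathbb{Z}$.

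Finally, $B_1\ast\mathbb{Z}$ is precisely the HNN extension $B_1\ast_{\{1\}\simeq\{1\}}$ with trivial edge groups, so Theorem \ref{hnn}(1) gives $\partial_f(B_1\ast\mathbb{Z})\simeq\partial_f(B_1\ast B_1)$. Chaining the three stages produces $\partial_f(G)\simeq\partial_f(B_1\ast\mathbb{Z}^{\ast(k-1)})\simeq\partial_f(B_1\ast\mathbb{Z})\simeq\partial_f(B_1\ast B_1)$, as desired.

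I expect the main obstacle to be the middle stage, the reduction in the number of cyclic factors. The naive route --- writing $B_1\ast\mathbb{Z}^{\ast n}=(B_1\ast\mathbb{Z}^{\ast(n-1)})\ast\mathbb{Z}$ and applying Theorem \ref{hnn}(1) to peel off the outer $\mathbb{Z}$ --- replaces the group by $(B_1\ast\mathbb{Z}^{\ast(n-1)})\ast(B_1\ast\mathbb{Z}^{\ast(n-1)})$ and therefore \emph{doubles} the complexity at each step rather than decreasing it, so the recursion never terminates. The trick that avoids this is to first turn an $\mathbb{Z}\ast\mathbb{Z}$ pattern into $\mathbb{Z}\ast F$ via Proposition \ref{finite amalgam} and then simply delete the finite factor using Corollary \ref{homeo tree of groups}(3). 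The point requiring care is the bookkeeping that keeps the hypotheses of these two results satisfied at every stage: an $A\ast A$ pattern (here with $A=\mathbb{Z}$) is available for Proposition \ref{finite amalgam}, and at least two infinite vertex groups survive for Corollary \ref{homeo tree of groups}(3), which holds exactly because we always retain $B_1$ and at least one copy of $\mathbb{Z}$.
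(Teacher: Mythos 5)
Your proof is correct, but it takes a genuinely different route from the paper's. The paper argues by induction on $k$ directly: in the base case $k=2$ it replaces $B_2$ by $\mathbb{Z}$ (via Theorem \ref{homeo free product}) and then applies the HNN identification $B_1\ast\mathbb{Z}\cong B_1\ast_{\{1\}}$ together with Theorem \ref{hnn}(1) --- exactly your final stage --- while in the inductive step it passes to $\partial_f(B_1\ast B_1\ast B_k)$ and invokes Theorem \ref{homeo three product} (matching the two copies of $B_1$) to collapse this to $\partial_f(B_1\ast B_k)$. You never use Theorem \ref{homeo three product}; instead you normalize all elementary factors to $\mathbb{Z}$ at once (Corollary \ref{gen free product}) and then shed copies of $\mathbb{Z}$ one at a time through the detour $\partial_f(\mathbb{Z}\ast\mathbb{Z})\simeq\partial_f(\mathbb{Z}\ast F)$ (Proposition \ref{finite amalgam} fed into Theorem \ref{homeo free product}), deleting the finite factor by Corollary \ref{homeo tree of groups}(3). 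Your hypothesis bookkeeping is sound at every stage (all factors infinite where needed; at least two infinite vertex groups, namely $B_1$ and one $\mathbb{Z}$, when discarding $F$), none of the results you cite depends on the present corollary, so there is no circularity, and your diagnosis of the doubling pitfall is exactly the obstruction that the paper's use of Theorem \ref{homeo three product} is designed to overcome. As for what each route buys: the paper's proof is shorter, but it forward-references Theorem \ref{homeo three product}, which is only stated and proved in the following section, whereas your proof cites only statements appearing before this corollary in the paper's linear order --- with the caveat that Corollary \ref{homeo tree of groups}(3), which the paper states without proof, itself most naturally requires a Theorem \ref{homeo three product}-type collapsing of repeated free factors, so your argument is independent of that theorem only to the extent that this corollary is.
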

\begin{proof}
	We prove it by induction on $k$. Let $k=2$. Then, $\partial_f(G)\simeq\partial_f(B_1\ast\mathbb Z)$. Since $B_1\ast \mathbb Z$ is isomorphic to $B_{\ast_{\{1\}}}$, $\partial_f(G)\simeq\partial_f(B_{\ast_{\{1\}}})$ which is homeomorphic to $\partial_f(B_1\ast B_1)$ by Theorem \ref{hnn}(1). Suppose $\partial_f(B_1\ast\dots\ast B_{k-1})\simeq\partial_f(B_1\ast B_1)$. Now, by Theorem \ref{homeo free product}, $\partial_f(G)\simeq\partial_f(B_1\ast B_1\ast B_k)$ which is homeomorphic to $\partial_f(B_1\ast B_k)$ by Theorem \ref{homeo three product}. Now the corollary follows from the case when $k=2$.
\end{proof}
By combining Corollary \ref{homeo tree of groups}(3) and Corollary \ref{remove virtual cyclic}, we have the following:
\begin{cor}\label{main cor3}
 For $k,l\in\mathbb N$, let $G=A_1\ast\dots\ast A_k\ast B_1\ast\dots\ast B_l$ where $A_i$'s are non-elementary and $B_j$'s are elementary groups. Then, we have the following:
 
 (1) If $k\geq 2,l\geq 1$ then the Floyd boundary of $G$ is homeomorphic to $\partial_f(A_1\ast\dots\ast A_k)$.
 
 (2) If $k=1$ and $l\geq 1$ then $\partial_f(G)\simeq\partial_f(A_1\ast A_1)$. \qed
\end{cor}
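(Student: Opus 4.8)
The plan is to peel the elementary free factors off $G$ one at a time until only the non-elementary factors remain, landing on $\partial_f(A_1\ast\dots\ast A_k)$ in case (1) and on $\partial_f(A_1\ast A_1)$ in case (2). Throughout I view $G$ as the fundamental group of a tree of groups with trivial edge groups, so Corollary \ref{homeo tree of groups} applies directly, and I use repeatedly that the Floyd boundary of a free product of infinite groups is determined by the homeomorphism types of its factors (Theorem \ref{homeo free product}, Corollary \ref{gen free product}). The first move is to discard every finite $B_j$ at once. Since $k\geq 1$ there is at least one infinite vertex group; when at least two vertex groups are infinite this deletion is Corollary \ref{homeo tree of groups}(3), and when $A_1$ is the only infinite vertex group (which can happen only in case (2)) it is Corollary \ref{homeo tree of groups}(2), already giving $\partial_f(G)\simeq\partial_f(A_1\ast A_1)$. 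After this step every surviving factor is infinite, so I may assume each remaining $B_j$ is an infinite (two-ended) elementary group.

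Case (2) then finishes immediately: $G$ has become $A_1\ast B_{j_1}\ast\dots\ast B_{j_r}$ with $A_1$ infinite and each $B_{j_i}$ virtually cyclic, so Corollary \ref{remove virtual cyclic} yields $\partial_f(G)\simeq\partial_f(A_1\ast A_1)$. For case (1), I would absorb the remaining infinite elementary factors $B_{j_1},\dots,B_{j_r}$ one at a time by induction on $r$, the case $r=0$ being trivial. For the inductive step I regroup
\[
A_1\ast\dots\ast A_k\ast B_{j_1}\ast\dots\ast B_{j_r}=(A_1\ast B_{j_r})\ast A_2\ast\dots\ast A_k\ast B_{j_1}\ast\dots\ast B_{j_{r-1}},
\]
apply Corollary \ref{remove virtual cyclic} to the factor $A_1\ast B_{j_r}$ to get $\partial_f(A_1\ast B_{j_r})\simeq\partial_f(A_1\ast A_1)$, and then use free-product invariance (Corollary \ref{gen free product}) to substitute this factor, obtaining
\[
\partial_f(A_1\ast\dots\ast A_k\ast B_{j_1}\ast\dots\ast B_{j_r})\simeq\partial_f(A_1\ast A_1\ast A_2\ast\dots\ast A_k\ast B_{j_1}\ast\dots\ast B_{j_{r-1}}).
\]

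The hard part is the next move, collapsing the now-duplicated non-elementary factor $A_1$: one must show
\[
\partial_f(A_1\ast A_1\ast Y)\simeq\partial_f(A_1\ast Y),\qquad Y=A_2\ast\dots\ast A_k\ast B_{j_1}\ast\dots\ast B_{j_{r-1}},
\]
where $Y$ is infinite because $k\geq 2$. Neither Corollary \ref{homeo tree of groups} nor Corollary \ref{remove virtual cyclic} can delete an infinite non-elementary factor, so this step is exactly where extra input is required; I would supply it with the three-factor absorption result (Theorem \ref{homeo three product}), applied with $X=A_1$. Granting this collapse, the right-hand side above is $\partial_f(A_1\ast\dots\ast A_k\ast B_{j_1}\ast\dots\ast B_{j_{r-1}})$, to which the inductive hypothesis applies, completing (1). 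The reason I expect this collapse to be the delicate point, and the reason it is true, is structural: the boundary of a free product of at least two non-elementary groups is glued together from copies of the factor boundaries indexed over the Cantor boundary of the Bass--Serre tree, and this gluing is sensitive only to the set of homeomorphism types occurring among the factors, not to their multiplicities, so a repeated factor may be merged without changing the homeomorphism type.
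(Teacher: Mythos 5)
Your proof is correct and follows the route the paper intends: discard the finite $B_j$'s with Corollary \ref{homeo tree of groups}(2,3), handle the surviving two-ended factors via Corollary \ref{remove virtual cyclic}, and substitute inside the free product with Corollary \ref{gen free product}. The point worth highlighting is that you correctly identified a real gap in the paper's own one-line justification (``combining Corollary \ref{homeo tree of groups}(3) and Corollary \ref{remove virtual cyclic}''): those two results alone leave a duplicated factor $A_1\ast A_1$ that they cannot collapse, so the appeal to Theorem \ref{homeo three product} (equivalently Corollary \ref{homeo four} or Corollary \ref{same group homeo}) in your inductive step is genuinely necessary --- indeed the paper performs exactly this collapse with those tools in the proof of Theorem \ref{main theorem}. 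So your argument is not a different route but a complete, correctly justified version of the paper's abbreviated one.
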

\section{Main Theorem}\label{6}
Let $f$ and $f'$ be two Floyd functions which is fixed for this and the next section. Here, the main goal is to prove the following:
\begin{theorem}\label{general case}
	Suppose $G$ and $H$ are two finitely generated infinite groups. Suppose $\mathcal G$ and $\mathcal H$ are graphs of groups decomposition of $G$ and $H$, respectively such that the edge groups of $\mathcal G$ and $\mathcal H$ are finite. Then, we have the following:
	\begin{enumerate}
		\item If each vertex group of $\mathcal G$ is elementary and $G$ has infinitely many ends then $\partial_f(G)$ is homeomorphic to the Cantor set.
		
		\item Suppose at least one vertex group of either $\mathcal G$ or $\mathcal H$ is non-elementary. Let $h(\mathcal G)$ and $h(\mathcal H)$ denote the set of homeomorphism types of Floyd boundaries of non-elementary vertex groups of $\mathcal G$ and $\mathcal H$ with respect to $f$ and $f'$, respectively. If $h(\mathcal G)=h(\mathcal H)$ then $\partial_f(G)$ is homeomorphic to $\partial_{f'}(H)$.
	\end{enumerate}
	
\end{theorem}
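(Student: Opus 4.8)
The plan is to use Bass--Serre theory to reduce each of $G$ and $H$ to a free product whose Floyd boundary is controlled by the results of Sections \ref{4} and \ref{5}, and then to compare the two free products by the free--product theorem (Theorem \ref{main theorem}). Since $G$ is finitely generated and all edge groups of $\mathcal G$ are finite, I may assume the underlying graph $Y$ of $\mathcal G$ is finite and that the vertex groups are finitely generated (the latter being standard once the edge groups are finite). Fix a spanning tree $Y_0\subseteq Y$ and let $e_1,\dots,e_r$ be the edges of $Y$ outside $Y_0$. Writing $G_0$ for the fundamental group of the tree of groups $\mathcal G|_{Y_0}$, the group $G$ is built from $G_0$ by $r$ successive HNN extensions along the finite edge groups associated to $e_1,\dots,e_r$.

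The reduction itself proceeds in three moves. First, applying Corollary \ref{homeo tree of groups} to the tree part gives $\partial_f(G_0)\simeq\partial_f(W_0)$, where $W_0$ is the free product of the infinite vertex groups of $\mathcal G$ (these include every non-elementary vertex group, all of which sit in $Y_0$). Second, I absorb the loops one at a time: each is an HNN extension over a finite associated subgroup of an infinite base group $B$, so Theorem \ref{hnn}(1) gives $\partial_f\simeq\partial_f(B\ast B)$, and Theorem \ref{homeo free product}, which propagates a homeomorphism of factors to the free product, upgrades the current model $W$ to $W\ast W$. Iterating over $e_1,\dots,e_r$ yields $\partial_f(G)\simeq\partial_f(W)$ for a free product $W$ whose \emph{set} of non-elementary free factors equals the set of non-elementary vertex groups of $\mathcal G$, together with finitely many elementary (finite or two-ended) factors coming from elementary vertex groups and the doublings. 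Third, Corollary \ref{main cor3} strips away the elementary factors, producing $\partial_f(G)\simeq\partial_f(W')$, where $W'$ is a free product of at least two non-elementary factors whose set of homeomorphism types of boundaries is exactly $h(\mathcal G)$ (when only one non-elementary vertex group is present, the doubling guarantees $W'=A_1\ast A_1$ still has two factors).

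For part (1), every vertex group is elementary, so $W$ is a free product of elementary groups. As $G$ has infinitely many ends it is not virtually cyclic, hence $W$ is not two-ended; Corollary \ref{homeo free product cyclic} (if some factor is infinite) or Corollary \ref{homeo tree of groups}(4) (if all factors are finite) then identifies $\partial_f(G)$ with the Cantor set. For part (2), the reduction gives $\partial_f(G)\simeq\partial_f(W'_G)$ and $\partial_{f'}(H)\simeq\partial_{f'}(W'_H)$, with the sets of homeomorphism types of the free factors of $W'_G$ and $W'_H$ equal to $h(\mathcal G)$ and $h(\mathcal H)$ respectively. Since at least one side has a non-elementary vertex group and $h(\mathcal G)=h(\mathcal H)$, both sets are nonempty and equal, so Theorem \ref{main theorem} yields $\partial_f(W'_G)\simeq\partial_{f'}(W'_H)$ and therefore $\partial_f(G)\simeq\partial_{f'}(H)$.

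The main obstacle is the reduction step for a non-tree graph of groups: Corollary \ref{homeo tree of groups} only treats trees, so the loops of $Y$ must be removed inductively as HNN extensions, and at each stage I must verify that the base group is infinite so that Theorem \ref{hnn}(1) applies rather than the Cantor-set alternative Theorem \ref{hnn}(2), while checking that the doublings never enlarge the \emph{set} of non-elementary factor types. A secondary subtlety is the bookkeeping of the degenerate cases --- a single non-elementary vertex group ($k=1$) or an otherwise minimal decomposition --- where one must ensure the reduced free product retains at least two factors before Theorem \ref{main theorem} can be invoked.
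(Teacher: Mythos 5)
Your proposal follows essentially the same route as the paper's proof: pass to a spanning tree, reduce the tree part via Corollary \ref{homeo tree of groups} (together with Corollary \ref{main cor3} and Corollary \ref{remove virtual cyclic}), absorb the loop edges one at a time as HNN extensions using Theorem \ref{hnn}(1) propagated through Theorem \ref{homeo free product}, and finish with Theorem \ref{main theorem}. The one spot where you are looser than the paper is part (1) when the tree part is finite --- the paper there first applies Theorem \ref{hnn}(2) to the first loop yielding an infinite group and only afterwards iterates Theorem \ref{hnn}(1) --- but you explicitly flag this as the point to verify, and the resolution is the one you indicate.
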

To prove the above theorem, we use the following result:
\begin{theorem}\label{main theorem}
	For $n,m\geq 2$, let $G_1=A_1\ast\dots \ast A_n$ and $G_2=B_1\ast\dots \ast B_m$ be two free products where at least one free factor of either $G_1$ or $G_2$ is non-elementary. Let $h(G_1)$ and $h(G_2)$ denote the set of homeomorphism types of Floyd boundaries of non-elementary free factors of $G_1$ and $G_2$ with respect to $f$ and $f'$, respectively. If $h(G_1)=h(G_2)$ then the Floyd boundary of $G_1$ is homeomorphic to the Floyd boundary of $G_2$.
\end{theorem}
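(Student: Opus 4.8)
The plan is to reduce both free products to a common ``canonical form'' that depends only on the set of boundary types, and then to invoke the two--factor and multi--factor transfer results. First note that since $h(G_1)=h(G_2)$ and the hypothesis guarantees that at least one free factor of $G_1$ or $G_2$ is non-elementary, both sets $h(G_1)$ and $h(G_2)$ are non-empty; hence \emph{both} $G_1$ and $G_2$ possess at least one non-elementary free factor. Write the non-elementary factors of $G_1$ as $A_{i_1},\dots,A_{i_p}$ (with $p\ge 1$) and its elementary factors as $E_1,\dots,E_q$ (with $q\ge 0$ and $p+q=n\ge 2$). Using Corollary \ref{main cor3} I would first absorb all elementary factors: if $p\ge 2$ then $\partial_f(G_1)\simeq\partial_f(A_{i_1}\ast\cdots\ast A_{i_p})$ (trivially when $q=0$, and by Corollary \ref{main cor3}(1) when $q\ge 1$), while if $p=1$ then necessarily $q\ge 1$ and $\partial_f(G_1)\simeq\partial_f(A_{i_1}\ast A_{i_1})$ by Corollary \ref{main cor3}(2). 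In either case $\partial_f(G_1)$ is homeomorphic to the Floyd boundary of a free product $G_1'$ of \emph{at least two} non-elementary groups whose boundaries realise exactly the types in $h(G_1)$. I would do the same for $G_2$ with respect to $f'$, obtaining $G_2'$.

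Next I would standardise the factors. Write $h(G_1)=h(G_2)=\{X_1,\dots,X_r\}$ for the distinct homeomorphism types. For each $j$ fix a non-elementary factor $C_j$ of $G_1$ with $\partial_f(C_j)\simeq X_j$ and a non-elementary factor $D_j$ of $G_2$ with $\partial_{f'}(D_j)\simeq X_j$. Applying Corollary \ref{gen free product} with $f_1=f_2=f$, I may replace every factor of $G_1'$ by the representative of its type without changing the boundary, so that $\partial_f(G_1')\simeq\partial_f\big(C_{1}^{\ast a_1}\ast\cdots\ast C_{r}^{\ast a_r}\big)$ for some multiplicities $a_j\ge 1$ with $\sum_j a_j\ge 2$; similarly $\partial_{f'}(G_2')\simeq\partial_{f'}\big(D_{1}^{\ast b_1}\ast\cdots\ast D_{r}^{\ast b_r}\big)$ with $b_j\ge 1$ and $\sum_j b_j\ge 2$. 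I would then collapse the multiplicities using the duplicate-absorption result $\partial_f(C\ast C\ast D)\simeq\partial_f(C\ast D)$ of Theorem \ref{homeo three product}, grouping the remaining factors into a single group $D$ by associativity. If $r\ge 2$ this brings $G_1'$ to the canonical form $C_{1}\ast\cdots\ast C_{r}$ with one factor of each type; if $r=1$ it brings $G_1'$ only down to $C_{1}\ast C_{1}$. The same procedure applied to $G_2'$ yields $D_{1}\ast\cdots\ast D_{r}$, respectively $D_{1}\ast D_{1}$.

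Finally I would compare the two canonical forms across the two Floyd functions. Since $\partial_f(C_j)\simeq X_j\simeq\partial_{f'}(D_j)$ for every $j$, Corollary \ref{gen free product} (now with $f_1=f$ and $f_2=f'$) gives $\partial_f(C_1\ast\cdots\ast C_r)\simeq\partial_{f'}(D_1\ast\cdots\ast D_r)$ when $r\ge 2$, and Theorem \ref{homeo free product} gives $\partial_f(C_1\ast C_1)\simeq\partial_{f'}(D_1\ast D_1)$ when $r=1$. Chaining the homeomorphisms produced in the three steps then yields $\partial_f(G_1)\simeq\partial_{f'}(G_2)$, as desired.

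The step I expect to be the main obstacle is the multiplicity collapse. The delicate point is that one must never reduce the total number of factors below two: the Floyd boundary of a single non-elementary group is genuinely different from that of a free product of two such groups, so the absorption $C\ast C\ast D\simeq C\ast D$ may be applied only while a non-trivial complementary factor $D$ survives. This is exactly why the cases $r\ge 2$ and $r=1$ must be separated, the latter collapsing to $C_1\ast C_1$ rather than to a lone factor. A secondary technical care is the consistent bookkeeping of the two distinct Floyd functions $f$ and $f'$ throughout; this is accommodated by the fact that Theorem \ref{homeo free product} and Corollary \ref{gen free product} are stated for a pair of (possibly different) Floyd functions, so the single-function reductions of Step 2 and the cross-function comparison of Step 3 all fit into the same framework.
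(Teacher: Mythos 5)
Your proposal is correct and follows essentially the same route as the paper's own proof: absorb the elementary factors via Corollary \ref{main cor3}, standardise and collapse repeated boundary types down to a canonical form (one representative per type when there are at least two types, a doubled representative when there is only one) using Corollary \ref{gen free product}, Theorem \ref{homeo three product} and its corollaries, and finally compare the two canonical forms across the Floyd functions $f$ and $f'$ via Corollary \ref{gen free product} or Theorem \ref{homeo free product}. Your explicit case split $r=1$ versus $r\geq 2$ mirrors the paper's Case 1/Case 2, and your caution about never dropping below two free factors is exactly what the paper's case analysis enforces implicitly.
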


Before getting into the proof of Theorem \ref{main theorem}, we prove the following result which turns out to be useful in proving the above theorem:
\begin{theorem}\label{homeo three product}
	Suppose that $G=A\ast B\ast C$ is a free product of infinite groups. If $\partial_f(A)$ is homeomorphic to $\partial_f(C)$ then $\partial_f(G)$ is homeomorphic to $\partial_f(A\ast B)$.
\end{theorem}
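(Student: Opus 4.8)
The plan is to first reduce to the case where the absorbed factor is literally a copy of $A$. Since $\partial_f(A)\simeq\partial_f(C)$, Corollary \ref{gen free product} applied to the triples $A\ast B\ast C$ and $A\ast B\ast A$ (with $f_1=f_2=f$, matching the first two factors by the identity and the third by the given homeomorphism) yields $\partial_f(A\ast B\ast C)\simeq\partial_f(A\ast B\ast A)$. Thus it suffices to prove $\partial_f(A\ast A'\ast B)\simeq\partial_f(A\ast B)$, where $A'$ denotes a second copy of $A$. Following Section \ref{3}, I would realize both Floyd boundaries as the boundary $\delta(\Gamma_f)=\delta_{Stab}(\Gamma_f)\sqcup\partial T$ of the tree of Cayley graphs, and then construct an explicit homeomorphism $F$ from $\delta(\Gamma_f)$ (for $G=A\ast A'\ast B$) to $\delta(\Gamma'_f)$ (for $K=A\ast B$), extending to the compactifications $\overline{\Gamma}_f,\overline{\Gamma}'_f$.

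The mechanism for absorbing the duplicate factor should be the splitting of Lemma \ref{main topo lemma} together with Remark \ref{rem1}: writing $\overline{A}=A\cup\partial_f(A)$, I would fix a partition $A=A_0\sqcup A_1$ and bijections $\varphi_i\colon A\to A_i$ that extend to homeomorphisms $\bar{\varphi}_i\colon\overline{A}\to\overline{A_i}$ restricting to the identity on $\partial_f(A)$. As in the encodings used in Theorem \ref{amalgam}(1) and Proposition \ref{finite amalgam}, the piece-index $i$ would record whether a given syllable of the image word originated from the $A$-copy or the $A'$-copy of the source, so that the two distinct $\partial_f(A)$-carrying vertex types of $G$ are both realized inside the single $\partial_f(A)$-carrying vertex type of $K$, while the $\partial_f(B)$-carrying vertices correspond directly. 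On a stabilizer boundary point the map $F$ would then be defined by applying the appropriate $\bar{\varphi}_i$ on each $\partial_f(A)$-factor and the identity on each $\partial_f(B)$-factor.

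The hard part will be that, unlike in Theorem \ref{amalgam} and Proposition \ref{finite amalgam}, there is no isomorphism of Bass--Serre trees $T\to T'$ on which to build $F$. In $G$, a vertex stabilized by a conjugate of $A$ carries, hanging off each of its group elements, both a $B$-subtree \emph{and} an entire $A'$-subtree, whereas the corresponding vertex of $K$ carries only $B$-subtrees; since $\partial_f(A)$ and $\partial_f(B)$ are distinct, the extra $A'$-subtrees cannot simply be rerouted into $B$-subtrees. Hence $F$ must genuinely reorganize the tree, \emph{unfolding} each $A\ast A'$-subtree of $T$ across many $A$-vertices of $T'$; in particular an infinite $A/A'$-alternating ray in $T$ (a point of $\partial T$) must be sent to a point of $\partial T'$ that is cut out by infinitely many $B$-syllables. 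I would carry this out by a recursion on the block decomposition of reduced words into maximal $\{A,A'\}$-runs separated by $B$-syllables, using the pieces $A_0,A_1$ to interleave the two copies and spreading each run across the target, and defining the induced map $\partial T\to\partial T'$ to be compatible with this unfolding. Getting this reorganization to be a well-defined bijection that respects the tree metric (equivalently, the shadows of subtrees) is the essential obstacle.

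Finally I would verify that $F$ is a continuous bijection. Continuity at a point $\xi\in\delta_{Stab}(\Gamma_f)$ would be checked against the neighborhood basis $V_U(\xi)$, using Remark \ref{rem1}(2) and Corollary \ref{main cor} to produce, for each target neighborhood of $F(\xi)$, a source neighborhood mapping into it, exactly as in the proofs of Theorem \ref{amalgam} and Proposition \ref{finite amalgam}; continuity at a point $\eta\in\partial T$ would be checked against the basis $V_n(\eta)$ and would reduce to the compatibility of the $\partial T\to\partial T'$ map constructed above. Since $\delta(\Gamma_f)$ and $\delta(\Gamma'_f)$ are compact and $F$ is a continuous bijection between compact Hausdorff spaces, $F$ is a homeomorphism, which together with the reduction in the first paragraph completes the proof.
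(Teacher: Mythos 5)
Your opening reduction is fine: Corollary \ref{gen free product} (with $f_1=f_2=f$) does give $\partial_f(A\ast B\ast C)\simeq\partial_f(A\ast B\ast A')$, and your overall architecture --- realizing both boundaries as $\delta_{Stab}(\Gamma_f)\sqcup\partial T$ via Section \ref{3}, defining $F$ syllable by syllable, checking continuity against the neighborhood bases using Remark \ref{rem1}(2) and Corollary \ref{main cor}, and finishing by compactness --- is exactly the paper's. But there is a genuine gap at the center: you split the wrong factor, and as a result the map you need is never constructed. Splitting $A=A_0\sqcup A_1$ and sending $A$-syllables into $A_0$ and $A'$-syllables into $A_1$ collapses whenever a reduced word of $A\ast A'\ast B$ contains an $\{A,A'\}$-run of length at least $2$: consecutive syllables $a\,a'$ would be sent to $\varphi_0(a)\varphi_1(a')$, two letters lying in the same free factor of $A\ast B$, which merge into a single syllable, destroying injectivity and the tree structure. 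You correctly notice this (the ``unfolding'' of $A\ast A'$-subtrees) and call it ``the essential obstacle,'' but you never resolve it; what remains is a plan whose hardest step is missing, not a proof.

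The paper resolves precisely this point, in a way that also shows your claim that ``there is no isomorphism of Bass--Serre trees $T\to T'$ on which to build $F$'' is incorrect. Take the segment decomposition $A-B-C$ with $B$ as the \emph{middle} vertex (so in $T$ no $A$-vertex is adjacent to a $C$-vertex), write every $g\in A\ast B\ast C$ in the standard form $g=a_1b_1d_1b_2d_2\cdots b_nd_n$ with $d_k\in (A\setminus\{e_A\})\cup(C\setminus\{e_C\})$ and the separators $b_k\in B$ allowed to be trivial, and split the \emph{common} factor $B$ instead of $A$: by Lemma \ref{main topo lemma} one gets $B\setminus\{e_B\}=B_1\sqcup B_2$ with bijections $\beta_i:B\to B_i$ extending to homeomorphisms that fix $\partial_f(B)$. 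Each $b_k$ is then sent to $\beta_1(b_k)$ or $\beta_2(b_k)$ according to whether $d_k$ lies in $A$ or in $C$, and each $d_k$ is sent to $d_k$ or to $\gamma(d_k)$, where $\gamma:C\to A$ is the bijection furnished by Lemma \ref{main lemma}. The two features that make this work are exactly what your splitting lacks: since $B_1,B_2\subset B\setminus\{e_B\}$, a trivial separator $b_k=e_B$ sitting between an $A$-syllable and a $C$-syllable is sent to a \emph{nontrivial} $B$-letter, so image words are automatically reduced (your ``unfolding'' happens for free); and since $B_1\cap B_2=\emptyset$, the image word remembers which factor each $d_k$ came from, giving injectivity. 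This produces a genuine type-respecting isomorphism $\phi:T\to T'$ (the degree discrepancy at $B$-vertices, $2|B|$ versus $|B|$, is absorbed by the bijection $\beta_1\sqcup\beta_2$ onto $B\setminus\{e_B\}$, using that $B$ is infinite), after which the definition of $F$ and the continuity--compactness argument run exactly as you outlined. If you insist on your reduction to $C=A'$, the same device applies verbatim; but the reduction is unnecessary, since $\gamma$ handles a general $C$ directly.
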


We immediately have the following corollaries:
\begin{cor}\label{homeo four}
	Suppose that $G=A\ast B\ast C\ast D$ is a free product of infinite groups. If $\partial_f(A)\simeq\partial_f(C)$ and $\partial_f(B)\simeq\partial_f(D)$ then $\partial_f(G)\simeq\partial_f(A\ast B)$.
\end{cor}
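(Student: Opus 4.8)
The plan is to invoke Theorem~\ref{homeo three product} twice, first to eliminate the factor $C$ and then to eliminate $D$, exploiting the fact that free products are associative and commutative up to isomorphism and that the Floyd boundary of a group is an isomorphism invariant (it is independent of the finite generating set, as noted after the definition of the Floyd boundary). Since $A,B,C,D$ are all infinite, every free product formed from them below is again infinite, so the hypotheses of Theorem~\ref{homeo three product} will be satisfied at each stage.

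First I would regroup $G=A\ast B\ast C\ast D\cong A\ast(B\ast D)\ast C$. Viewing this as a free product of the three infinite groups $A$, $B\ast D$, and $C$, and using the hypothesis $\partial_f(A)\simeq\partial_f(C)$, Theorem~\ref{homeo three product} (applied with the first and third factors being $A$ and $C$, so that $C$ is the factor dropped) yields $\partial_f(G)\simeq\partial_f\bigl(A\ast(B\ast D)\bigr)=\partial_f(A\ast B\ast D)$.

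Next I would treat $A\ast B\ast D$. Rewriting it as $B\ast A\ast D$ and applying Theorem~\ref{homeo three product} to the three infinite factors $B$, $A$, $D$, the hypothesis $\partial_f(B)\simeq\partial_f(D)$ (so that the first and third factors have homeomorphic boundaries) gives $\partial_f(A\ast B\ast D)\simeq\partial_f(B\ast A)=\partial_f(A\ast B)$. Chaining the two homeomorphisms produces $\partial_f(G)\simeq\partial_f(A\ast B)$, as desired. There is essentially no hard analytic content here; the only points requiring care are the bookkeeping of the regroupings—making sure that at each stage the two factors with homeomorphic boundaries occupy the first and third slots of a genuine threefold free product—together with the observation that a group isomorphism induces a homeomorphism of Floyd boundaries for a fixed Floyd function, which is what legitimizes the reorderings of the free factors.
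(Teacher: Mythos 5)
Your proof is correct and is essentially the paper's own argument: two successive applications of Theorem \ref{homeo three product}, first dropping $C$ (via the regrouping $A\ast(B\ast D)\ast C$) and then dropping $D$ (via $B\ast A\ast D$). The paper states this in one line and leaves the regroupings implicit; your version merely makes that bookkeeping explicit.
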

\begin{proof}
	By applying Theorem \ref{homeo three product}, we see that $\partial_f(G)\simeq\partial_f(A\ast B\ast D)$ which is again homeomorphic to $\partial_f(A\ast B)$ by Theorem \ref{homeo three product}.
\end{proof}

\begin{cor}\label{same group homeo}
	For $n\geq 2$, suppose $A=A_1\ast\dots \ast A_n$ is a free product of infinite groups. Suppose $2\leq i\leq n$, $\partial_f(A_i)$ is homeomorphic to $\partial A_1$. Then, $\partial_f(A)$ is homeomorphic to $\partial_f(A_1\ast A_1)$.
\end{cor}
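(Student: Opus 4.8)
The plan is to induct on the number $n$ of free factors, using Theorem \ref{homeo three product} as the absorption tool that removes one factor at a time while keeping the remaining factors matched to $A_1$.

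For the base case $n = 2$ I would apply Theorem \ref{homeo free product} directly: taking $f_1 = f_2 = f$, the identity homeomorphism on the first factor $\partial_f(A_1) \to \partial_f(A_1)$ and the given homeomorphism $\partial_f(A_2) \to \partial_f(A_1)$ on the second, that theorem yields $\partial_f(A_1 \ast A_2) \simeq \partial_f(A_1 \ast A_1)$, which is exactly the claim.

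For the inductive step ($n \geq 3$), assume the statement for all free products of fewer factors, and regroup $A$ as the three-fold free product
\[
A = A_1 \ast \big(A_2 \ast \dots \ast A_{n-1}\big) \ast A_n,
\]
setting $B := A_2 \ast \dots \ast A_{n-1}$. Since each $A_i$ is infinite, $B$ is infinite, so this is a free product of three infinite groups. As $\partial_f(A_1) \simeq \partial_f(A_n)$ by hypothesis, Theorem \ref{homeo three product} applied with $A_1$ and $A_n$ as the two outer, boundary-matched factors gives
\[
\partial_f(A) \simeq \partial_f\big(A_1 \ast B\big) = \partial_f\big(A_1 \ast A_2 \ast \dots \ast A_{n-1}\big).
\]
The right-hand product has $n-1$ infinite factors, and every $A_i$ with $2 \leq i \leq n-1$ still satisfies $\partial_f(A_i) \simeq \partial_f(A_1)$, so the inductive hypothesis applies and yields $\partial_f(A_1 \ast \dots \ast A_{n-1}) \simeq \partial_f(A_1 \ast A_1)$. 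Composing the two homeomorphisms closes the induction.

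I do not anticipate a substantive obstacle here; the only care needed is bookkeeping—verifying that the grouped middle factor $B$ is infinite (so that Theorem \ref{homeo three product} is applicable) and that the matched boundary condition survives the passage to the shorter product. As an equivalent route, one could first invoke Corollary \ref{gen free product} to replace every factor by a copy of $A_1$, thereby reducing the problem to showing $\partial_f(A_1 \ast \dots \ast A_1) \simeq \partial_f(A_1 \ast A_1)$ with $n$ copies on the left, and then collapse the copies one at a time by the same use of Theorem \ref{homeo three product} (now with both outer factors literally equal to $A_1$); this variant isolates the combinatorial reduction from the boundary-matching step but is otherwise the same argument.
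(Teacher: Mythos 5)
Your proof is correct and follows essentially the same route as the paper: an induction on $n$ whose engine is Theorem \ref{homeo three product} combined with Theorem \ref{homeo free product}. The only difference is cosmetic ordering—you absorb $A_n$ via Theorem \ref{homeo three product} first and then invoke the inductive hypothesis, whereas the paper applies the inductive hypothesis together with Theorem \ref{homeo free product} to reduce to $\partial_f(A_1\ast A_1\ast A_1)$ and collapses that by Theorem \ref{homeo three product}.
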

\begin{proof}
	We prove it by induction on $n$. From Theorem \ref{homeo three product}, it follows that $\partial_f(A_1\ast A_2\ast A_3)\simeq\partial_f(A_1\ast A_1).$ Suppose $\partial_f(A_1\ast\dots \ast A_{n-1})\simeq\partial_f(A_1\ast A_1)$. Then, by Theorem \ref{homeo free product}, $\partial_f(A)\simeq \partial_f(A_1\ast A_1\ast A_1)$ which is homeomorphic to $\partial_f(A_1\ast A_1)$.
\end{proof}
Using Theorem \ref{homeo three product} and Proposition \ref{finite amalgam}, we immediately have the following corollary:
\begin{cor}\label{cr1}
	Let $G=A_1*B_1$ and $H=A_2*B_2$, where $A_1$ and $A_2$ are infinite groups while $B_1$ and $B_2$ can be finite such that $\partial_f(A_1) \simeq \partial_f (A_2)$ and $\partial_f(B_1)\simeq \partial_f(B_2)$. Then $\partial_f(G) \simeq \partial_f (H).$  
	\qed
\end{cor}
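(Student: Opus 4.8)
The plan is to prove the statement by a short case analysis driven by a single structural observation, reducing everything to Theorem \ref{homeo free product} and Proposition \ref{finite amalgam}. The observation I would record first is the following dichotomy: the Floyd boundary of a finite group is empty, since its Cayley graph has finite diameter, whereas the Floyd boundary of an infinite finitely generated group is non-empty. For the latter I would note that the (locally finite, infinite, connected) Cayley graph contains a geodesic ray, hence a shortest sequence, which is Cauchy by Lemma \ref{shortest cauchy} and therefore converges to a boundary point. Because a homeomorphism preserves cardinality, the hypothesis $\partial_f(B_1)\simeq\partial_f(B_2)$ then forces $B_1$ and $B_2$ to be simultaneously infinite or simultaneously finite.

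In the first case, where $B_1$ and $B_2$ are both infinite, all four free factors are infinite, so the hypotheses $\partial_f(A_1)\simeq\partial_f(A_2)$ and $\partial_f(B_1)\simeq\partial_f(B_2)$ are precisely those required by Theorem \ref{homeo free product}. Applying that theorem to $G=A_1\ast B_1$ and $H=A_2\ast B_2$ gives $\partial_f(G)\simeq\partial_f(H)$ at once.

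In the second case, where $B_1$ and $B_2$ are both finite, I would first absorb the finite factors using Proposition \ref{finite amalgam}, which yields $\partial_f(A_1\ast B_1)\simeq\partial_f(A_1\ast A_1)$ and $\partial_f(A_2\ast B_2)\simeq\partial_f(A_2\ast A_2)$. It then remains to compare the two doubled products; since $\partial_f(A_1)\simeq\partial_f(A_2)$, Theorem \ref{homeo free product} (applied with both free factors equal to $A_1$ on one side and to $A_2$ on the other) gives $\partial_f(A_1\ast A_1)\simeq\partial_f(A_2\ast A_2)$, and one may equally route this comparison through Theorem \ref{homeo three product}. Concatenating the homeomorphisms produces the chain $\partial_f(G)\simeq\partial_f(A_1\ast A_1)\simeq\partial_f(A_2\ast A_2)\simeq\partial_f(H)$, completing this case.

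The one step that is not a pure citation is the opening dichotomy, so I expect it to be the main (though modest) obstacle: everything after it is an immediate application of the two reduction results. In particular, the entire content beyond the non-emptiness of Floyd boundaries of infinite groups is formal, which is why the corollary can reasonably be advertised as following ``immediately''.
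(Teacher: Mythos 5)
Your proposal is correct and takes essentially the same route as the paper, which gives no written argument but derives the corollary immediately from Proposition \ref{finite amalgam} together with its comparison theorems (the paper cites Theorem \ref{homeo three product} for the comparison step, while you invoke Theorem \ref{homeo free product}; as you note, both work, and the implicit case split into ``$B_1,B_2$ both finite'' versus ``both infinite'' is the same). One slip worth fixing in your opening dichotomy: the Floyd boundary of a finite group is empty because its Cayley graph is compact and hence already complete, \emph{not} because it has finite diameter --- every Cayley graph has finite $d_f$-diameter since $f$ is summable, yet infinite groups have non-empty boundary, so finite diameter by itself proves nothing.
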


\begin{cor}\label{cr2}
	Let $G=A\ast F_1\dots \ast F_n$ be a free product of groups such that $A$ is an infinite group and $F_i$'s are finite groups. Then $\partial_f(G) \simeq \partial_f(A*A)$.
\end{cor}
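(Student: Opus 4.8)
The plan is to induct on the number $n$ of finite free factors. The base case $n=1$ is precisely Proposition \ref{finite amalgam}, which gives $\partial_f(A\ast F_1)\simeq\partial_f(A\ast A)$.

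For the inductive step, fix $n\geq 2$ and assume the statement for every infinite group together with any $n-1$ finite groups. Given $G=A\ast F_1\ast\dots\ast F_n$, I would set $A'=A\ast F_1\ast\dots\ast F_{n-1}$, which is infinite, so that $G=A'\ast F_n$. Applying the inductive hypothesis to $A$ and $F_1,\dots,F_{n-1}$ yields $\partial_f(A')\simeq\partial_f(A\ast A)$. Because $F_n$ is finite, $\partial_f(F_n)=\emptyset$, and Corollary \ref{cr1}---whose second factor is allowed to be finite---then gives $\partial_f(G)=\partial_f(A'\ast F_n)\simeq\partial_f((A\ast A)\ast F_n)$.

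It remains to simplify $(A\ast A)\ast F_n$. Applying Proposition \ref{finite amalgam} with the infinite group $A\ast A$ absorbs the finite factor, at the cost of a doubling: $\partial_f((A\ast A)\ast F_n)\simeq\partial_f((A\ast A)\ast(A\ast A))$. Since $(A\ast A)\ast(A\ast A)=A\ast A\ast A\ast A$ is a free product of four copies of the infinite group $A$, all having the same boundary $\partial_f(A)$, Corollary \ref{same group homeo} collapses it to $\partial_f(A\ast A\ast A\ast A)\simeq\partial_f(A\ast A)$. Chaining the homeomorphisms from the previous paragraph with these completes the induction.

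The reason one cannot argue more directly---say by quoting Corollary \ref{gen free product} to replace each $F_i$ by a copy of $A$---is that $F_i$ and $A$ have non-homeomorphic Floyd boundaries ($\emptyset$ versus $\partial_f(A)$), so there is no factorwise substitution. Proposition \ref{finite amalgam} is exactly the mechanism that lets a finite factor be absorbed into an infinite one, but it does so by doubling the infinite factor. The main point of the argument is therefore to carry this doubling through the induction and then cancel it using the collapse $A\ast A\ast A\ast A\simeq A\ast A$ supplied by Corollary \ref{same group homeo}.
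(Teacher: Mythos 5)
Your proof is correct and essentially the same as the paper's: both induct on $n$, use Proposition \ref{finite amalgam} to absorb the last finite factor at the cost of doubling an infinite one, and then collapse $\partial_f(A\ast A\ast A\ast A)$ to $\partial_f(A\ast A)$. The only difference is an immaterial reordering — the paper absorbs $F_n$ first (obtaining $H\ast H$ with $H=A\ast F_1\ast\dots\ast F_{n-1}$) and then substitutes $A\ast A$ for $H$ via Theorem \ref{homeo free product}, whereas you substitute first via Corollary \ref{cr1} and absorb afterwards.
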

\begin{proof}
	We prove it by induction on $n$. For $n=1$, it is Proposition \ref{finite amalgam}. Let $H=A\ast F_1\ast\dots*F_{n-1}$. Assume that $\partial_f(H)\simeq\partial_f(A\ast A)$.  Now, by Proposition \ref{finite amalgam}, we see that $\partial_f(G)\simeq \partial_f(H\ast H)$. By inductive hypothesis, we have that $\partial_f(H)\simeq \partial_f(A\ast A)$. Hence, by Theorem \ref{homeo free product}, $\partial_f(G)\simeq \partial_f(A\ast A\ast A\ast A)$. Finally, by Corollary \ref{homeo four}, we have that $\partial_f(G)\simeq \partial_f(A\ast A)$. 
\end{proof}

\subsection{Proof of Theorem \ref{homeo three product}} Let $G$ be as in Theorem \ref{homeo three product} and let $T$ be its Bass--Serre tree. Suppose $G'=A\ast B$ and let $T'$ be its Bass--Serre tree. In this subsection, first of all, we prove that there is an isomorphism between $T$ and $T'$. Using this isomorphism, we define a map from $\partial_f(G)$ to $\partial_f(G')$ and prove that this is in fact a homeomorphism. 
\vspace{.2cm}

{\bf Isomorphism between $T$ and $T'$:} Let $\Gamma_B$ be the Cayley graph of $B$ with respect to a finite generating set. Then, $\overline{\Gamma}_B=\Gamma_B\cup\partial_f(B)$ is a compact metrizable space. With restricted metric, $B\cup\partial_f(B)$ is compact, $B$ is dense in $\overline{\Gamma}_B$ and $B$ contains all isolated points of $\overline{\Gamma}_B.$ Then, by Lemma \ref{main topo lemma}, there exist $B_1,B_2$ such that $B\setminus\{e_B\}=B_1\sqcup B_2$ and there exist bijections $\beta_1:B\to B_1,\beta_2:B\to B_2$ that induce homeomorphisms $\bar{\beta_1}:B\cup\partial_f(B)\to B_1\cup\partial_f(B)$ and $\bar{\beta_2}:B\cup\partial_f(B)\to B_2\cup\partial_f(B)$, respectively. Since $\partial_f(A)$ is homeomorphic to $\partial_f(C)$, by Lemma \ref{main lemma}, there exists a bijection $\gamma:C\to A$ such that $\gamma(1)=1$ and it induces a homeomorphism from $A\cup\partial_f(A)$ to $C\cup\partial_f(C)$. Let $\alpha$ be the identity map from $A$ to $A$. Any element
$g\in A*B*C$ can be written as
$g= a_1 b_1d_1b_2d_2\dots b_n d_n$, where $a_1 \in A$, $b_i \in B$, and $d_i \in A\setminus\{e_A\} \cup C\setminus\{e_C\} $, and $a_1,b_i,d_n$ can be the identity in their respective groups. We shall call this the {\em standard form} of $g$.

Let $\tau$ be the edge between $A$ and $B$ in $T'$, and $\tau_1$, $\tau_2$ be the edges between $A$ and $B$, $B$ and $C$, respectively in $T$. Note that any edge in $T'$ is a $G'$-translate of $\tau$. Similarly, any edge of $T$ is either a $G$-translate of $\tau_1$ or a $G$-translate of $\tau_2$.  Define 
a map $\phi:T\to T'$ in the following manner:

{\bf Case 1.} Let $g\tau_1$ be an edge in $T$ for some $g\in G$ and $a_1b_1d_1,\dots ,b_nd_n$ be the standard form for $g$. There are two subcases:

{\bf Subcase 1(a).} Suppose $d_n= 1$. Then, we define
$$h:= a_1 \beta_{i_1}(b_1) \delta_1(d_1),\dots ,\beta_{i_n}(b_n)$$ where for all $1\leq k<n$, $i_k=1$ if $d_k \in A\setminus\{e_A\}$ otherwise $i_k=2$, and $\delta_{k}= \alpha$ if $d_k \in A\setminus\{e_A\}$ otherwise  $\delta_k= \gamma$, and $i_n =1$. In this case, we define $\phi(g\tau_1)=h\tau$. 

{\bf Subcase 1(b).} Suppose $d_n\neq 1$. Then, $d_n\in A$ and we define  
$$h:= a_1 \beta_{i_1}(b_1) \delta_1(d_1),\dots ,\beta_{i_n}(b_n) \delta_n(d_n)$$ where, for $1\leq k\leq n$, $i_k=1$ if $d_k \in A\setminus\{e_A\}$ otherwise $i_k=2$, and $\delta_{k}= \alpha$ if $d_k \in A\setminus\{e_A\}$ otherwise  $\delta_k= \gamma$. In this case, we define $\phi(g \tau_1)=h\tau$.

{\bf Case 2.} Let $g\tau_2$ be an edge in $T$ for some $g\in G$ and $a_1b_1d_1,\dots ,b_nd_n$ be the standard form for $g$. Again there are two subcases:

{\bf Subcase 2(a).} Suppose $d_n\neq 1$. Then, we define
$$h:= a_1 \beta_{i_1}(b_1) \delta_1(d_1),\dots ,\beta_{i_n}(b_n) \delta_n(d_n)$$
where, for $1\leq k\leq n$, $i_k=1$ if $d_k \in A\setminus\{e_A\}$ otherwise $i_k=2$, and $\delta_{k}= \alpha$ if $d_k \in A\setminus\{e_A\}$ otherwise  $\delta_k= \gamma$. In this case, we define $\phi(g\tau_2)=h\tau$. 

{\bf Subcase 2(b).} Suppose $d_n=1$. Then, we define
$$h:= a_1 \beta_{i_1}(b_1) \delta_1(d_1),\dots ,\beta_{i_n}(b_n)$$
where for all $1\leq k<n$, $i_k=1$ if $d_k \in A\setminus\{e_A\}$ otherwise $e_k=2$, and $\delta_{k}= \alpha$ if $d_k \in A\setminus\{e_A\}$ otherwise $\delta_k= \gamma$, and $i_n=2$. In this case, we define $\phi(g\tau_2)=h\tau$.

From the construction of $\phi$, it follows that $\phi$ is an isomorphism.
\vspace{.2cm}

{\bf Homeomorphism from $\partial_f(G)$ to $\partial_f(G')$:} Let $\Gamma_f$ and $\Gamma_f'$ be the scaled Cayley graphs of $G$ and $G'$, respectively and let $\Gamma_f$ and $\Gamma'_f$
be compact metrizable spaces as constructed in Section \ref{3}. From Proposition \ref{p2}, it follows that $\partial_f(G)$ is homeomorphic to $\delta(\Gamma_f)$ and $\partial_f(G')$ is homeomorphic to $\delta(\Gamma'_f)$. Let $h_A:\partial_f(A)\rightarrow \partial_f(A),$ $h_B: \partial_f(B) \rightarrow \partial_f(B)$ be the identity
homeomorphisms, and let $h_C : \partial_f(C)\rightarrow \partial_f(A)$ be a homeomorphism such that $\gamma\cup h_C: C \cup \partial_f(C)\rightarrow A \cup \partial_f(A)$ is a homeomorphism. Since $\phi: T \rightarrow T'$ is an isomorphism, it induces a homeomorphism $\partial \phi: \partial T \rightarrow \partial T'$.

We define a map $F: \delta(\Gamma_f) \rightarrow \delta(\Gamma'_f)$ in the following manner:
\begin{enumerate}
	\item  Suppose $\xi \in \delta_{Stab}(\Gamma_f)$. Let $v$ be a vertex in $T$ such that $\xi \in \partial_f(G_v)$. As discussed in the proof of Theorem \ref{homeo free product}, for $\xi$, there exists a unique \textit{reduced representative} $[g,\xi]$. Define
	\[ F([g,\xi]):=\begin{cases} 
		[\phi(g),\xi] & \text{ if } v=gA,  \\
		[\phi(g), \xi] & \text{ if } v=gB, \\
		[\phi(g),h_C(\xi)] & \text{ if } v=gC,
	\end{cases}
	\] 
	\item Suppose $\eta \in \partial T$. Then, define $F(\eta)= \partial \phi(\eta)$.
\end{enumerate}
From Remark \ref{rem1}(2) and Corollary \ref{main cor}, the continuity of $F$ on every point of $\delta_{Stab}(\Gamma_f)$ follows. From the definition of neighborhoods, it is straightforward to check the continuity of $F$ at every point of $\partial T$. Since 
$\delta(\Gamma_f)$ is compact and  $F$ is a bijection, we conclude that $F$ is a homeomorphism. Hence, we complete the proof of Theorem \ref{homeo three product}.
\qed

\subsection{Proof of Theorem \ref{main theorem}}

Let $n_1<n_2<\dots <n_k<n_{k+1}$ be the natural numbers such that $n=n_1+\dots +n_k+n_{k+1}$. By reindexing the groups, if required, we have that $G_1=(A_1\ast\dots \ast A_{n_1})\ast\dots \ast (A_{n_{k-1}+1}\ast\dots \ast A_{n_k})\ast\dots \ast (A_{n_{k}+1}\ast\dots \ast A_{n_{k+1}})$ such that $\partial_f(A_{s_j})\simeq\partial_f(A_{n_j})$, where $1\leq j\leq k$ and $1+n_{j-1}\leq s_j\leq n_j$ with $n_0=0$, and $A_i$'s are elementary groups for $n_{k+1}+1 \leq i \leq n_{k+2}$. Now, the following two cases can occur:

{\bf Case 1.} Suppose $k=1$. If $n_1=1$ too then, by Corollary \ref{main cor3}(2), $\partial_f(G_1)\simeq\partial_f(A_1\ast A_1)$. Since $h(G_1)=h(G_2)$, by reindexing if necessary, $\partial_f(G_2)\simeq\partial_{f'}(B_1\ast B_1)$. Then, by Theorem \ref{homeo free product}, we are done. If $n_1\geq 2$ then, by Corollary \ref{main cor3}(1), $\partial_f(G_1)\simeq\partial_f(A_1\ast\dots\ast A_{n_1})$. By Corollary \ref{same group homeo}, $\partial_f(G_1)\simeq\partial_f(A_{n_1}\ast A_{n_1})$. As $h(G_1)=h(G_2)$, there exists $m_1$ such that $\partial_{f'}(G_2)\simeq\partial_{f'}(B_{m_1}\ast\dots B_{m_1})$. Again, we are done by Theorem \ref{homeo free product}.

 {\bf Case 2.} Suppose $k\geq 2$. By Corollary \ref{main cor3}(1), we get $\partial_f (G_1) \simeq \partial_f(A_1*A_2*\dots A_{n_k})$. Using Corollary \ref{same group homeo} and Corollary \ref{gen free product}, we see that $\partial_f(G_1)\simeq \partial_f(A_{n_1}\ast A_{n_1}\ast A_{n_2}\ast A_{n_2}\ast\dots \ast A_{n_k}\ast A_{n_k})$. An easy application Corollary \ref{homeo four} and induction on $k$ gives that $\partial_f(G_1)\simeq \partial_f(A_{n_1}\ast\dots \ast A_{n_k})$. Since $h(G_1)=h(G_2)$, by following the same process as above, there exist $m_1,m_2,\dots ,m_k$ such that $\partial_{f'}(G_2)\simeq\partial_{f'}(B_{m_1}\ast \dots \ast B_{m_k})$. Finally, by Corollary \ref{gen free product}, we have that $\partial_f(G_1)\simeq\partial_{f'}(G_2)$. \qed

\subsection{Proof of Theorem \ref{general case}}
	Let $Y_G$ and $Y_H$ be the undeline graphs of $\mathcal G$ and $\mathcal H$. Let $T_G$ and $T_H$ be the maximal trees in $Y_G$ and $Y_H$, respectively. Then, the restriction of $\mathcal G$ and $\mathcal H$ to $T_G$ and $T_H$ are trees of groups with finite edge groups whose fundamental groups are, say, $G'$ and $H'$.
	
	(1) 
	If $G'$ is infinite then, by Theorem \ref{hnn}, $\partial_f(G)\simeq \partial_f(G'\ast G')$ which is homeomorphic to the Cantor set by Corollary \ref{homeo tree of groups}(3,4) and Corollary \ref{homeo free product cyclic}. Suppose $G'$ is finite. Since $G$ is repeated HNN extension of $G'$ and $G$ has infinitely many ends, at least one edge outside $T_G$ gives an HNN extension, say, $G''$ such that it is an infinite group. Then, by repeated application of Theorem \ref{hnn}(1), we see that $\partial_f(G)\simeq\partial_f(G''\ast G'')$. Again, by Theorem \ref{hnn}(2), $\partial_f(G'')$ is homeomorphic to the Cantor set. Now, Theorem \ref{homeo free product} and Corollary \ref{homeo free product cyclic} give that $\partial_f(G)\simeq\partial_f(\mathbb Z\ast\mathbb Z\ast\mathbb Z\ast\mathbb Z)$ which is again homeomorphic to $\partial_f(\mathbb Z\ast\mathbb Z)$ by Corollary \ref{homeo four}. Hence, the Floyd boundary of $G$ is homeomorphic to the Cantor set. Similarly, by doing the same analysis, one can show that $\partial_{f'}(H)$ is homeomorphic to the Cantor set.
	
	(2) Let $G_1,\dots,G_k$ and $H_1,\dots,H_l$ be the vertex groups in $\mathcal G$ and $\mathcal H$, respectively. There are two cases to be consider:
	
	{\bf Case 1.} By reindexing (if required), we assume that $G_1$ is non-elementary and, for $i\neq 1$, $G_i$ is elementary. Then, $\partial_f(G')\simeq \partial_f(G_1\ast G_1)$ by Corollary \ref{main cor3}. Since $h(\mathcal G)=h(\mathcal H)$, by reindexing (if required), we assume that $H_1$ is non-elementary and, for $j\neq 1$, $H_j$ is elementary. Thus, $\partial_{f'}(H')\simeq\partial_{f'}(H_1\ast H_1)$. Now, Theorem \ref{hnn}(1) and Corollary \ref{homeo four} give us that $\partial_f(G)\simeq\partial_f(G_1\ast G_1)$. Similarly, $\partial_{f'}(H)\simeq\partial_{f'}(H_1\ast H_1)$. Hence, $\partial_f(G)\simeq\partial_f(H)$ by Theorem \ref{homeo free product}. 
	
	{\bf Case 2.} By reindexing (if required), we assume that there exists $2\leq k'\leq k$ such that $G_i$ is non-elementary for $1\leq i\leq k'$ and $G_i$ is elementary $k'+1\leq i\leq k$. Similarly, we assume that there exists $2\leq l'\leq l$ such that $H_j$ is non-elementary for $1\leq j\leq l'$ and $H_j$ is elementary for $l'+1\leq j\leq l$. Thus, by combining Corollary \ref{homeo tree of groups}(3) and Corollary \ref{remove virtual cyclic}, we have $\partial_f(G')\simeq\partial_f(G_1\ast\dots\ast G_{k'})$ and $\partial_{f'}(H)\simeq\partial_{f'}(H_1\ast\dots\ast H_{l'})$. Now, Theorem \ref{hnn}(1) gives that $\partial_f(G)\simeq\partial_f((G_1\ast\dots\ast G_{k'})\ast(G_1\ast\dots\ast G_{k'}))$ and $\partial_{f'}(H)\simeq\partial_{f'}((H_1\ast\dots\ast H_{l'})\ast(H_1\ast\dots\ast H_{l'}))$. Since $h(\mathcal G)=h(\mathcal H)$, Theorem \ref{main theorem} gives us that $\partial_f(G)\simeq \partial_{f'}(H)$. This complete the proof of the theorem.
\qed

	\begin{comment}

	Thus, by Theorem \ref{amalgam}, $\partial_f(G')\simeq\partial_f(G_1\ast\dots\ast G_k)$ and $\partial_f(H')\simeq\partial_f(H_1\ast\dots\ast H_l)$. Each edge in the complement of $T_G$ gives an HNN extension of $G$ over finite group. Hence, by Theorem  \ref{hnn}, $\partial_f(G)$ is homeomorphic to $\partial_f((G_1\ast\dots\ast G_k)\ast (G_1\ast\dots\ast G_k))$. Similarly, $\partial_f(H)$ is homeomorphic to $\partial_f((H_1\ast\dots\ast H_l)\ast (H_1\ast\dots\ast H_l))$. If $k=1$ then $\partial_f(G)\simeq \partial_f(G_1\ast G_1)$. There are two cases:
	\begin{itemize}
		\item Suppose $l=1$. Then, $\partial_f(H)\simeq\partial_f(H_1\ast H_1)$ and therefore, by Theorem \ref{homeo free product}, $\partial_f(G)\simeq\partial_f(H)$.
		\item Suppose $l\geq 2$. In this case, by Theorem \ref{main theorem}, $\partial_f(H)\simeq\partial_f(H_1\ast\dots\ast H_l)$. Since $h(\mathcal G)=\mathcal (H)$, by appying Theorem \ref{main theorem}, we see that $\partial_f(G)\simeq\partial_f(H)$.
	\end{itemize}
A similar analysis can be done when $l=1$ and $k\in\mathbb N$. Hence the theorem.

\end{comment}

\section{Components of Floyd boundary of infinite-ended groups}\label{7}
It is well known that Gromov boundary of a hyperbolic group is connected if and only if it is $1$-ended. In this section, first of all, we prove an analog of this fact in the context of Floyd boundary. Let $G$ be a $1$-ended group and $f$ be a Floyd function. We denote the scaled Cayley graph of $G$ with respect to a finite generating set by $\Gamma_f$ with the Floyd metric $d_f$, and $d$ denotes the metric on the Cayley graph. Assume that $\overline{\Gamma}_f$ is the metric completion with the metric $\bar{d}_f$. 

\begin{prop}
	$\partial_f (G)$ is connected.
\end{prop}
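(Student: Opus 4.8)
The statement to prove is that $\partial_f(G)$ is connected for a $1$-ended group $G$. I would argue by contradiction: suppose $\partial_f(G)=U\sqcup V$ with $U,V$ nonempty, disjoint, and clopen in $\partial_f(G)$. Since $\partial_f(G)$ is closed in the compact space $\overline{\Gamma}_f$, it is compact, so $U$ and $V$ are disjoint compact sets and $\bar{d}_f(U,V)=2\delta>0$. The naive idea of taking shortest sequences $u_n\to\xi\in U$, $v_n\to\zeta\in V$ and joining $u_n,v_n$ by a path that stays far from $e_G$ (where $f$ is small) does \emph{not} directly work, because such a connecting path may be very long and its total Floyd length is then not controlled; this is precisely the main obstacle. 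The resolution is to avoid joining $u_n$ to $v_n$ by a single short path and instead to \emph{two-color} the far vertices of $\Gamma$ and locate a single color-changing \emph{edge} far from $e_G$, whose Floyd length is automatically tiny.

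\textbf{Step 1 (far vertices hug the boundary).} The key supporting fact is: for every $\epsilon>0$ there is $M$ so that $d(e_G,x)\ge M$ implies $\bar{d}_f(x,\partial_f(G))<\epsilon$. I would prove this by compactness. If it failed there would be vertices $x_n$ with $d(e_G,x_n)\to\infty$ and $\bar{d}_f(x_n,\partial_f(G))\ge\epsilon_0$; passing to a subsequence $x_n\to z\in\overline{\Gamma}_f$, continuity of $\bar d_f(\cdot,\partial_f(G))$ forces $z\in\Gamma_f$, say at $d$-distance $d_0<\infty$ from $e_G$. But any path in $\Gamma_f$ from a vertex at $d$-distance $a$ to one at $d$-distance $b>a$ must, for each integer $j$ with $a\le j\le b-1$, contain an edge crossing from distance $j$ to $j+1$, and such an edge has Floyd length $f(j)$; hence its Floyd length is at least $\sum_{j=a}^{b-1}f(j)$. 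This gives
\[
d_f(z,x_n)\ \ge\ \sum_{j=d_0+1}^{\,d(e_G,x_n)-1} f(j)\ \longrightarrow\ \sum_{j>d_0}f(j)\ >\ 0,
\]
contradicting $x_n\to z$. (Note this step uses only summability of $f$, not $1$-endedness.)

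\textbf{Step 2 (coloring and the color-changing edge).} I would fix tolerances by choosing $M$ so large that $\sup_{d(e_G,x)\ge M}\bar d_f(x,\partial_f(G))<\delta/3$ (Step 1) and also $f(M+1)<\delta/3$ (possible since $f(n)\to0$). Color a vertex $x$ with $d(e_G,x)\ge M$ by $U$ if $\bar d_f(x,U)<\delta/3$ and by $V$ otherwise; this is well defined, exclusive (a vertex within $\delta/3$ of both $U$ and $V$ would force $\bar d_f(U,V)<2\delta/3$), and exhaustive. Both colors occur far out: the tails of shortest sequences toward $\xi\in U$ and $\zeta\in V$ are colored $U$ and $V$ respectively (their $d$-distances tend to $\infty$ by definition of shortest sequence).

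\textbf{Step 3 ($1$-endedness forces a contradiction).} Here I invoke that $G$ is $1$-ended. By local finiteness each $\Gamma\setminus B(e_G,M)$ has only finitely many components (each meets the finite sphere of radius $M+1$), exactly one of which is infinite; letting $R$ exceed the maximal $d$-distance attained in the finite components, every vertex at $d$-distance $\ge R$ lies in the unique infinite component, which is connected. Thus a far $U$-colored vertex $x$ and a far $V$-colored vertex $y$ (both at distance $\ge R$) can be joined by a path $\gamma$ all of whose vertices lie at $d$-distance $>M$, hence are colored. Since the endpoints have different colors, $\gamma$ contains consecutive vertices $p,q$ with $p$ colored $U$ and $q$ colored $V$; as both are at distance $\ge M+1$, the connecting edge has Floyd length $\le f(M+1)<\delta/3$, so $d_f(p,q)<\delta/3$. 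Then
\[
2\delta=\bar d_f(U,V)\ \le\ \bar d_f(U,p)+d_f(p,q)+\bar d_f(q,V)\ <\ \tfrac{\delta}{3}+\tfrac{\delta}{3}+\tfrac{\delta}{3}\ =\ \delta,
\]
a contradiction. Hence no such clopen partition exists and $\partial_f(G)$ is connected. The one place demanding care is Step 1 (uniform closeness of far vertices to the boundary), together with the observation that all the metric control must be extracted from a \emph{single} edge rather than from a whole path; $1$-endedness enters only through the connectivity-at-infinity used in Step 3, exactly as one expects from the $\mathbb{Z}$ example, where that connectivity—and hence the conclusion—fails.
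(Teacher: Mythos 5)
Your proof is correct and takes essentially the same route as the paper's: both argue by contradiction, two-color the vertices of $\Gamma_f$ according to which clopen piece of $\partial_f(G)$ they are near, use $1$-endedness to join differently colored far-out vertices by a path avoiding a large ball around $e_G$, and derive the contradiction from a color-changing edge far from the identity, whose Floyd length is negligible. The only difference is presentational: you make everything quantitative (the uniform ``far vertices hug the boundary'' lemma plus $\delta/3$ estimates and a single triangle inequality), whereas the paper colors all vertices via nearest-point projection to the boundary and concludes with a subsequence/compactness limit argument in which the two endpoints of the color-changing edges converge to a common boundary point lying in both pieces.
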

\begin{proof}
	Define a map $\pi: G \to \partial_f (G)$ such that  $\bar{d}_f(g, \pi(g))=\bar{d}_f(g, \partial_f(G))$. For the sake of contradiction, assume that $\partial_f(G)= V_1 \sqcup V_2$, where $V_i$'s are non-empty, disjoint open subsets of $\partial_f(G)$. Let $B_i:= \pi^{-1}(V_i)$ for $i=1,2$. Denote the closure of $B_i$ in $\overline{\Gamma}_f$ by $\overline{B}_1$. From definition of $\pi$, it follows that if $\{g_i\}$ is a sequence converging to a point $\xi\in\partial_f(G)$ then $\pi(g_i)$ also converges to $\xi$. Thus, it is see to check that $\overline{B}_i\subset B_i \cup V_i$.

	Let $\xi \in V_1$ and let $\{g_n\}$ be a sequence in $\Gamma_f$ such that $g_n \to \xi$. Suppose there exists an infinite subsequence of $\{g_{n}\}$ which is contained in $B_2$. Then $\xi \in \overline{B}_2$ which in turn implies that $\xi \in V_2$. This gives a contradiction as $V_1$ and $V_2$ are disjoint. Hence, $\{g_n\}$ is eventually contained in $B_1$. Applying a similar argument to $\xi' \in V_2$ leads us to the following conclusion:
	\[
	\overline{B}_i: = B_i \cup V_i.
	\]

	Since $V_1$ and $V_2$ are non-empty, there exists $\xi_1 \in V_1,$ $\xi_2 \in V_2$. Assume that $\{x_n\}$ and $\{y_n\}$ are sequences in $B_1$ and $B_2$, respectively, such that $x_n \to \xi_1$ and $y_n \to \xi_2$. For every $m\in \mathbb{N}$, denote $K_m$ the closed $d$-metric ball of radius $m$ about $e_G$. Since $G$ is $1$-ended, there exist subsequences $\{x_{n_m}\}$ and $\{y_{n_m}\}$ and paths $\gamma_{m}$ joining $x_{n_m}$ and $y_{n_m}$ such that $\gamma_m$ is contained in $\Gamma_f\setminus K_m$. Let $e_m=[a_m,b_m]$ be an edge such that $a_m\in B_1$ and $b_m\in B_2$. Since $\Gamma_f$ is compact, there exists subsequence $\{a_{m_k}\}$ that converges to $\eta \in V_1$. Since $d(a_{m_k},b_{m_k})=1$, it is easy to see that $b_{m_k} \to \eta$ in $\overline{\Gamma}_f$ too. This implies that $\eta \in V_2$, which gives us a contradiction. Hence, we have the desired result.    
\end{proof}
Now, we record the following fact whose proof is same as the proof of \cite[Proposition 6.3,6.4]{martin-swiat}.

\begin{prop}\label{connected components}
	Let $G$ be a group which splits as a graph of groups $\mathcal G$ such that the edge groups are finite and $G$ is infinite-ended. Suppose each non-elementary vertex group is $1$-ended. Then, each connected component is either homeomorphic to the Floyd boundary of a non-elementary vertex group or it is a singleton set. \qed
\end{prop}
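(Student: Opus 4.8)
The plan is to transport the question to the model $\delta(\Gamma_f)=\delta_{Stab}(\Gamma_f)\sqcup\partial T$ of $\partial_f(G)$ furnished by Proposition \ref{p2}, where $T$ is the Bass--Serre tree of the splitting $\mathcal G$ and the edge groups, hence the separating cosets $\Gamma_{gH}$, are finite. I would first single out the candidate components: for each vertex $v\in T$ whose stabilizer $G_v$ is non-elementary, and therefore $1$-ended by hypothesis, the set $\partial_f(G_v)\subset\delta_{Stab}(\Gamma_f)$ is connected, being homeomorphic (via Lemma \ref{homeo identification}) to the Floyd boundary of a $1$-ended group and so connected by the preceding proposition. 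All remaining boundary points lie either in $\partial T$ or in some $\partial_f(G_v)$ with $G_v$ elementary, in which case $\partial_f(G_v)$ is a two-point set. The goal is then to show that each $\partial_f(G_v)$ with $G_v$ non-elementary is a maximal connected subset, and that every other point is a singleton component.

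The engine of the argument is a supply of clopen \emph{halfspaces}. Given an edge $e=g\tau$ of $T$, removing $e$ splits $T$ into two subtrees; writing $p\colon\overline{\Gamma}_f\to\overline T$ for the projection, let $\partial_e\subset\delta(\Gamma_f)$ be the set of boundary points whose $p$-image lies in the closure of the subtree not containing $v$. I would verify that $\partial_e$ is both open and closed in $\delta(\Gamma_f)$ directly from the neighborhood basis of Subsection \ref{amalgam topo}: because the coset $\Gamma_{gH}$ is \emph{finite}, for any boundary point $\zeta$ a sufficiently small basic neighborhood $V_U(\zeta)$ or $V_n(\zeta)$ avoids the finitely many vertices of $\Gamma_{gH}$ through which one would have to pass to cross $e$, so it stays entirely on one side of $e$. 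The same reasoning applied to the complementary halfspace shows $\partial_e$ is closed. This is precisely where the finiteness of the edge groups is used.

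With these halfspaces in hand, fix $v$ with $G_v$ non-elementary and suppose $C\supseteq\partial_f(G_v)$ is connected and contains a point $z\notin\partial_f(G_v)$. Since $p(z)\neq v$, let $e$ be the first edge of the geodesic (or geodesic ray) $[v,p(z)]$; then $\partial_e$ is clopen, contains $z$, and is disjoint from $\partial_f(G_v)$ because $\partial_f(G_v)\subset p^{-1}(v)$ lies on $v$'s side of $e$. Thus $C\cap\partial_e$ and $C\setminus\partial_e$ form a nontrivial clopen partition of $C$, contradicting connectedness. Hence $\partial_f(G_v)$ is a connected component, and by Lemma \ref{homeo identification} it is homeomorphic to the Floyd boundary of the corresponding non-elementary vertex group.

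It remains to treat the singletons, which I expect to be the main obstacle. For $\eta\in\partial T$, the halfspaces $\partial_e$ as $e$ ranges over the edges of the ray $[v_0,\eta)$ form a nested family of clopen sets whose intersection is $\{\eta\}$ (any $z\neq\eta$ is eventually excluded, as its $p$-image parts from the ray); since the component of $\eta$ is contained in every clopen set containing $\eta$, it equals $\{\eta\}$. For $\zeta\in\partial_f(G_v)$ with $G_v$ elementary, the halfspaces separate $\zeta$ from every point outside $\partial_f(G_v)$ exactly as above, so the only delicate point is to separate $\zeta$ from the other boundary point $\zeta'$ of the two-ended group $G_v$. For this I would use that $\Gamma_v^f$ is quasi-isometric to a line, split it into two ends meeting in a compact middle, and promote this to a clopen decomposition of $\delta(\Gamma_f)$ by assigning each edge of $T$ issuing from $v$ to the end containing its base vertex; verifying that this assignment yields a genuinely clopen set, and that the compact middle causes no leakage of limit points, is the fiddly step, and is where I would mirror the argument of \cite[Proposition 6.3, 6.4]{martin-swiat} most closely. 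Collecting the three cases shows that every component is either some $\partial_f(G_v)$ with $G_v$ non-elementary, hence homeomorphic to the Floyd boundary of a non-elementary vertex group, or a singleton, completing the proof.
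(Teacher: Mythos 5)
Your proposal is correct and follows essentially the same route as the paper, which simply invokes \cite[Proposition 6.3, 6.4]{martin-swiat}: the tree-halfspace/clopen-separation argument you give is precisely that argument transported to the model $\delta(\Gamma_f)$ of Section \ref{3}, with the finiteness of the edge groups entering exactly where you place it (a small neighborhood of a boundary point can avoid the finitely many gate vertices of a coset $\Gamma_{gH}$, which is what makes the halfspaces closed as well as open). One remark: the step you single out as the main obstacle --- separating the two boundary points $\zeta,\zeta'$ of an elementary (two-ended) vertex group $G_v$ --- is actually unnecessary, because your halfspaces already show that every clopen neighborhood argument excludes all points outside $\partial_f(G_v)$, so the component of $\zeta$ is contained in the quasi-component, hence in the two-point set $\{\zeta,\zeta'\}$; since a two-point subset of a metrizable space is discrete, any connected subset containing $\zeta$ is $\{\zeta\}$, and no end-splitting of $\Gamma_v^f$ or extra clopen decomposition is needed.
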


Now, we are ready to prove the following which gives a partial converse of Theorem \ref{general case}.
\begin{theorem}\label{partial converse}
		Suppose $G$ and $H$ are two finitely generated infinite-ended groups. Suppose $\mathcal G$ and $\mathcal H$ are graphs of groups decomposition of $G$ and $H$, respectively such that the edge groups of $\mathcal G$ and $\mathcal H$ are finite, and each non-elementary vertex group of $\mathcal G$ and $\mathcal H$ is $1$-ended. Also, assume that at least one vertex group of $\mathcal G$ and $\mathcal H$ is non-elementary. Then, we have the following:
		
		Let $h(\mathcal G)$ and $h(\mathcal H)$ denote the set of homeomorphism types of Floyd boundaries of non-elementary vertex groups of $\mathcal G$ and $\mathcal H$, respectively. If $\partial_f(G)$ is homeomorphic to $\partial_{f'}(H)$ then $h(\mathcal G)=h(\mathcal H)$.
\end{theorem}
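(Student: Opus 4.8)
The plan is to recover the set $h(\mathcal G)$ purely from the topology of $\partial_f(G)$---and likewise $h(\mathcal H)$ from $\partial_{f'}(H)$---by analysing connected components, so that any homeomorphism $\partial_f(G)\simeq\partial_{f'}(H)$ immediately forces $h(\mathcal G)=h(\mathcal H)$.

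First I would fix a homeomorphism $\Phi\colon\partial_f(G)\to\partial_{f'}(H)$ and invoke the elementary fact that a homeomorphism carries connected components bijectively onto connected components and restricts to a homeomorphism on each. Hence $\Phi$ induces a bijection between the components of the two boundaries under which matched components are homeomorphic; in particular singleton components correspond to singleton components and non-degenerate (i.e.\ containing more than one point) components correspond to non-degenerate ones. Therefore the set of homeomorphism types of non-degenerate components of $\partial_f(G)$ coincides with that of $\partial_{f'}(H)$.

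The heart of the argument is to identify the set of homeomorphism types of non-degenerate components of $\partial_f(G)$ with $h(\mathcal G)$. Here I would use Proposition \ref{connected components}: each component is either a singleton or homeomorphic to the Floyd boundary of a non-elementary vertex group of $\mathcal G$, which gives at once that every non-degenerate component type lies in $h(\mathcal G)$. For the opposite inclusion I would use the construction of Section \ref{3}: each vertex $v$ of the Bass--Serre tree whose stabiliser is a non-elementary vertex group carries the subset $\partial_f(\Gamma_v^f)\subset\partial_f(G)$, which by Lemma \ref{homeo identification} is homeomorphic to that vertex-group boundary and, since the vertex group is $1$-ended, is connected by the Proposition above asserting that the Floyd boundary of a $1$-ended group is connected. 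The separation estimates underlying Proposition \ref{p1} (and the proof of Proposition \ref{connected components}) show that such a connected subset is in fact a whole component, so every homeomorphism type in $h(\mathcal G)$ is realised by a component. Running the same argument for $\mathcal H$ and combining with the previous paragraph yields $h(\mathcal G)=h(\mathcal H)$.

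The main obstacle is precisely this realisation direction---upgrading Proposition \ref{connected components}, which only bounds components from one side, to the statement that each non-elementary vertex-group boundary occurs as a full component rather than being absorbed into, or split among, larger components. The genuinely delicate point is a non-elementary $1$-ended vertex group whose Floyd boundary happens to be a single point: such a group contributes a singleton component that is a priori indistinguishable from the tree-end singletons of $\partial T$, so one must argue that $h(\mathcal G)$ is nonetheless matched correctly on both sides. Once the correspondence between non-degenerate components and non-elementary vertex-group boundaries is pinned down, the theorem follows from the topological invariance of components established in the first two steps.
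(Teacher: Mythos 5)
Your route is essentially the paper's own: reduce, via the results of Sections \ref{5}--\ref{6}, to free products of the non-elementary vertex groups, and then combine Proposition \ref{connected components} with the fact that a homeomorphism carries connected components to connected components. One place where you are actually \emph{more} careful than the paper is the realisation step, and your sketch of it is correct: since the edge groups are finite, for every oriented edge of the Bass--Serre tree the preimage in $\delta(\Gamma_f)$ of the corresponding half-tree is clopen (a small neighborhood of any boundary point can be chosen to avoid the finite coset $\Gamma_{gH}$ through which one exits), so the connected set $\partial_f(\Gamma_v^f)$, which is connected because $G_v$ is $1$-ended and is homeomorphic to $\partial_f(G_v)$ by Lemma \ref{homeo identification}, cannot be properly contained in any connected subset; hence it is a whole component. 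The paper's proof never addresses this, although its one-line deduction needs it.

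The ``delicate point'' you flag, however, is a genuine gap, and it is equally a gap in the paper's proof, which silently passes from ``components match'' to ``$h(\mathcal G)=h(\mathcal H)$''. Matching components only yields that the sets of homeomorphism types of \emph{non-degenerate} components agree. A non-elementary $1$-ended vertex group can have one-point Floyd boundary for every Floyd function: $\mathbb{Z}^2$ is the standard example (two points at distance $\geq R$ from $e_G$ can be joined by a path of length $O(R)$ staying at distance $\geq R-1$, and $Rf(R)\to 0$ for a summable non-increasing $f$; alternatively, by Karlsson \cite{karlsson} an amenable group has at most two Floyd boundary points). So $[\mathrm{pt}]$ may lie in $h(\mathcal G)$, and the components witnessing it are singletons that Proposition \ref{connected components} cannot distinguish from the singletons of $\partial T$. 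Indeed the paper itself shows the provenance of singletons is not intrinsic: the two boundary points of each $\mathbb{Z}$-vertex of $A\ast\mathbb{Z}$ become tree ends once the group is rewritten as the HNN extension $A\ast_{\{1\}}$, which is exactly how Corollary \ref{remove virtual cyclic} absorbs elementary factors. Consequently neither your argument nor the paper's rules out, say, $\partial_f(\mathbb{Z}^2\ast A)\simeq\partial_{f'}(A\ast A)$ with $\partial_{f'}(A)$ non-degenerate, which would contradict the theorem, since then $h(\mathcal G)=\{[\mathrm{pt}],[\partial_{f'}(A)]\}\neq\{[\partial_{f'}(A)]\}=h(\mathcal H)$. (The problem disappears only in special cases, e.g.\ when $k=l=1$: there a one-point type on one side forces both boundaries to be totally disconnected.) To close the gap one must either exhibit a finer topological invariant that detects point-boundary vertex groups, or add the hypothesis that every non-elementary vertex group has Floyd boundary with more than one point --- automatic for Gromov boundaries in the hyperbolic setting of Martin--\`{S}wi\polhk{a}tkowski, but not for Floyd boundaries; under that extra hypothesis both your argument and the paper's are complete.
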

\begin{proof}
	Let $G_1,\dots,G_k$ and $H_1,\dots,H_l$ be the non-elementary vertex groups of $\mathcal G$ and $\mathcal H$, respectively. There are two cases to be consider:
	
	{\bf Case 1.} Suppose $k=1$ and $l=1$. Then, $\partial_f(G)\simeq \partial_f(G_1\ast G_1)$ and $\partial_{f'}(H)\simeq\partial_{f'}(H_1\ast H_1)$. Thus, $\partial_f(G_1\ast G_1)\simeq\partial_{f'}(H_1\ast H_1)$. Using Proposition \ref{connected components} and the fact that a homeomorphism takes connnected components to connected components, we see that $\partial_f(G_1)\sim\partial_{f'}(H_1)$. Suppose $k=1$ and $l\geq 2$. Then, $\partial_f(G)\simeq\partial_f(G_1\ast G_1)$ and $\partial_{f'}(H)\simeq\partial_{f'}(H_1\ast\dots\ast H_l)$. Again, by the same logic, we see that $\partial_f(G_1)\simeq\partial_{f'}(H_i)$ for $1\leq i\leq l$. A similar analysis can be done when $l=1$ and $k$ is any natural number.
	
	{\bf Case 2.} Suppose $k,l\geq 2$. In this case, we have that $\partial_f(G)\simeq\partial_f(G_1\ast\dots\ast G_k)$ and $\partial_{f'}(H)=\partial_{f'}(H_1\ast\dots H_l)$. Thus, $\partial_f(G_1\ast\dots\ast G_k)\simeq\partial_{f'}(H_1\ast\dots\ast H_l)$. Now, using Proposition \ref{connected components} and the fact that homeomorphism takes connected components to connected components, we see that $h(\mathcal G)=h(\mathcal H)$.
\end{proof}

In \cite{dunwoody}. M. Dunwoody proved that each finitely presented group $G$ has
a terminal splitting over finite subgroups, i.e. it is isomorphic to the fundamental
group of a graph of groups whose vertex groups are $1$-ended or finite
and whose edge groups are finite. Thus, by combining Theorem \ref{general case} and Theorem \ref{partial converse}, we have the following:
\begin{cor}
	Suppose $G$ and $H$ are two finitely presented infinite-ended groups. Suppose $\mathcal G$ and $\mathcal H$ are terminal splittings of $G$ and $H$, respectively such that at least one vertex of $\mathcal G$ and $\mathcal H$ is non-elementary. Then, we have the following:
	
	Let $h(\mathcal G)$ and $h(\mathcal H)$ denote the set of homeomorphism types of Floyd boundaries of non-elementary vertex groups of $\mathcal G$ and $\mathcal H$, respectively. Then, $\partial_f(G)$ is homeomorphic to $\partial_{f'}(H)$ if and only if $h(\mathcal G)=h(\mathcal H)$. \qed
\end{cor}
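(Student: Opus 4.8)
The plan is to obtain the corollary as the combination of the two directions already established: Theorem~\ref{general case}(2) supplies the implication $h(\mathcal G)=h(\mathcal H)\Rightarrow \partial_f(G)\simeq\partial_{f'}(H)$, while Theorem~\ref{partial converse} supplies the reverse implication. The only real work is to confirm that a terminal splitting meets the hypotheses of both theorems; once that is done, no further argument is needed.

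First I would record the structural consequence of Dunwoody's theorem that is built into the hypothesis: since $\mathcal G$ and $\mathcal H$ are terminal splittings, every vertex group is either $1$-ended or finite, and every edge group is finite. A finite group is virtually cyclic, hence elementary, whereas a $1$-ended group is infinite and not $2$-ended, so it cannot be virtually cyclic and is therefore non-elementary. Consequently, in each of $\mathcal G$ and $\mathcal H$ the non-elementary vertex groups are exactly the $1$-ended vertex groups; in particular the condition ``each non-elementary vertex group is $1$-ended'' holds automatically for a terminal splitting. Combined with the standing assumptions that $G$ and $H$ are finitely presented (hence finitely generated) infinite-ended groups and that at least one vertex group of each splitting is non-elementary, this verifies every hypothesis of both Theorem~\ref{general case} and Theorem~\ref{partial converse}.

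With the hypotheses in place, the two implications follow directly. For the backward direction, if $h(\mathcal G)=h(\mathcal H)$ then Theorem~\ref{general case}(2) applies verbatim and yields $\partial_f(G)\simeq\partial_{f'}(H)$. For the forward direction, if $\partial_f(G)\simeq\partial_{f'}(H)$ then Theorem~\ref{partial converse}, whose hypotheses we have just checked, gives $h(\mathcal G)=h(\mathcal H)$. Putting the two together establishes the claimed equivalence.

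I do not expect a genuine obstacle here, as the statement is a formal consequence of the two main theorems. The one point demanding care is the identification of ``non-elementary'' with ``$1$-ended'' among the vertex groups of a terminal splitting; it is precisely this identification that lets Theorem~\ref{partial converse} be invoked even though the corollary's hypotheses mention only terminal splittings rather than $1$-endedness explicitly. A secondary remark worth including is that finite presentation enters solely to guarantee, via Dunwoody's theorem, that such terminal splittings exist, so the corollary is in effect a statement about groups admitting splittings over finite subgroups with $1$-ended or finite vertex groups.
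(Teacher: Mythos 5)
Your proposal is correct and follows exactly the paper's route: the corollary is obtained by combining Theorem \ref{general case}(2) with Theorem \ref{partial converse}, using the fact that in a terminal splitting the edge groups are finite and the non-elementary vertex groups are precisely the $1$-ended ones (finite groups being elementary, $1$-ended groups being non-virtually-cyclic). Your explicit verification of the hypotheses, which the paper leaves implicit, is accurate and welcome.
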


	We end the paper by noting the following:
\begin{cor}\label{virtually free cor}
	Let $G$ be a virtually free group. Then, for any Floyd function $f$, $\partial_f(G)$ is homeomorphic to the Cantor set. Conversely, if, for some Floyd function $f$, $\partial_f(G)$ is homeomorphic to the Cantor set then $G$ is virtually free.
\end{cor}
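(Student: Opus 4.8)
The plan is to prove the two implications separately, using the structure theory of virtually free groups for the forward direction and the component analysis of Section \ref{7} for the converse.

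For the forward implication, I would first dispose of the degenerate cases: if $G$ is finite then $\partial_f(G)=\emptyset$, and if $G$ is two-ended (virtually cyclic) then $|\partial_f(G)|=2$ by \cite[Proposition 7]{karlsson}; in neither case is $\partial_f(G)$ a Cantor set, so I may assume $G$ is infinite and not virtually cyclic, hence infinite-ended. By the structure theorem for finitely generated virtually free groups (Karrass--Pietrowski--Solitar, equivalently Stallings together with accessibility), $G$ is the fundamental group of a finite graph of finite groups, so $G$ admits a graph-of-groups decomposition $\mathcal{G}$ whose edge groups and vertex groups are all finite. In particular every vertex group is elementary, and since $G$ has infinitely many ends, Theorem \ref{general case}(1) applies directly and yields that $\partial_f(G)$ is homeomorphic to the Cantor set.

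For the converse, suppose $\partial_f(G)$ is homeomorphic to the Cantor set for some Floyd function $f$; in particular $\partial_f(G)$ is infinite, compact, perfect, and totally disconnected. Being infinite forces $G$ to be infinite, and being totally disconnected with more than one point rules out $G$ being one-ended, since the first proposition of Section \ref{7} shows that the Floyd boundary of a one-ended group is connected. A two-ended group again gives a two-point boundary and a finite group an empty one, so $G$ must be infinite-ended. I would then invoke Dunwoody's accessibility \cite{dunwoody} to express $G$ as the fundamental group of a graph of groups $\mathcal{G}$ with finite edge groups in which every vertex group is either finite or one-ended.

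The main work, and the step I expect to be the crux, is to run the component analysis in reverse. By Proposition \ref{connected components}, every connected component of $\partial_f(G)$ is either a singleton or homeomorphic to the Floyd boundary of a non-elementary (necessarily one-ended) vertex group of $\mathcal{G}$. Since the Cantor set is totally disconnected, no component can be a non-degenerate continuum; combined with the observation that a one-ended vertex group cannot be elementary, I would like to conclude that $\mathcal{G}$ has no one-ended vertex group at all, so that every vertex group is finite and $G$ is therefore virtually free. The delicate point here is precisely the possibility of a one-ended, non-elementary vertex group whose Floyd boundary happens to be a single point (so that it contributes only singleton components and does not obstruct total disconnectedness); ruling this out — together with confirming that accessibility is genuinely available in the merely finitely generated setting rather than only the finitely presented one — is where I expect the real difficulty to lie.
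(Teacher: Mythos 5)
Your forward direction coincides with the paper's own proof: decompose the virtually free group as a finite graph of finite groups (Karrass--Pietrowski--Solitar) and quote Theorem \ref{general case}(1). Your preliminary step is an improvement rather than pedantry, since the statement as printed is literally false for finite and two-ended virtually free groups ($G=\mathbb{Z}$ is free, yet $|\partial_f(\mathbb{Z})|=2$), so the corollary has to be read with ``infinite-ended'' built in, exactly as you do.

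For the converse, the ``delicate point'' you flagged is not a technicality you failed to finish; it is a genuine gap, and it cannot be closed, because the converse as stated is false. The paper's proof commits exactly the error you anticipated: it asserts that the terminal splitting of $G$ has finite vertex groups, but nothing rules out one-ended vertex groups whose Floyd boundary is a single point, and Proposition \ref{connected components} is blind to such groups because their boundaries contribute only singleton components. Such groups exist for every Floyd function: by \cite{karlsson}, if $|\partial_f(\mathbb{Z}^2)|\geq 3$ then $\mathbb{Z}^2$ would contain a nonabelian free subgroup, so $|\partial_f(\mathbb{Z}^2)|\leq 2$, and since $\mathbb{Z}^2$ is one-ended the connectedness proposition at the start of Section \ref{7} forces $|\partial_f(\mathbb{Z}^2)|=1$. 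Now take $G=\mathbb{Z}^2\ast\mathbb{Z}^2$. By Proposition \ref{connected components} every component of $\partial_f(G)$ is a singleton; from the construction in Section \ref{3}, $\partial_f(G)$ is compact, metrizable and has no isolated points (every neighborhood of a stabilizer point or of a tree end contains infinitely many tree ends, as each vertex of the Bass--Serre tree has infinite degree). By Brouwer's characterization, $\partial_f(G)$ is homeomorphic to the Cantor set for every Floyd function $f$, yet $G$ contains $\mathbb{Z}^2$ and hence is not virtually free, since every subgroup of a virtually free group is virtually free. Your second worry---that Dunwoody's accessibility \cite{dunwoody} is available only for finitely presented groups while $G$ here is merely finitely generated---is also well founded (finitely generated inaccessible groups exist), but it is the lesser problem: the counterexample above is finitely presented. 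A correct converse would need a hypothesis that actually sees the vertex groups, e.g.\ that $G$ admits a splitting over finite groups in which every non-elementary vertex group has Floyd boundary with more than one point.
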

\begin{proof}
	Since virtually free groups admit a decomposition as graphs of finite groups, by Theorem \ref{general case}(1), $\partial_f(G)$ is homeomorphic to the Cantor set. Conversely, suppose that $\partial_f(G)$ is the Cantor set. This implies that $G$ is infinite-ended. Since virtually free groups are finitely presented, $G$ admits a terminal splitting where each vertex group is finite. This implies that $G$ is virtually free (see \cite{serre-trees}).
\end{proof}
\bibliography{homeo}

\providecommand{\bysame}{\leavevmode\hbox to3em{\hrulefill}\thinspace}
\providecommand{\MR}{\relax\ifhmode\unskip\space\fi MR }
\providecommand{\MRhref}[2]{%
  \href{http://www.ams.org/mathscinet-getitem?mr=#1}{#2}
}
\providecommand{\href}[2]{#2}
\begin{thebibliography}{Dun85}

\bibitem[BH99]{bridson-haefliger}
M.~Bridson and A~Haefliger, \emph{Metric spaces of nonpositive curvature},
  Grundlehren der mathematischen Wissenchaften, Vol 319, Springer-Verlag
  (1999).

\bibitem[Dun85]{dunwoody}
M.~J. Dunwoody, \emph{The accessibility of finitely presented groups}, Invent.
  Math. \textbf{81} (1985), no.~3, 449--457.

\bibitem[Flo80]{floyd}
W.~J. Floyd, \emph{Group {C}ompletions and {L}imit {S}ets of {K}leinian
  {G}roups}, Invent. Math. vol.57 (1980), 205--218.

\bibitem[Ger12]{gerasimov-floyd-map}
Victor Gerasimov, \emph{Floyd maps for relatively hyperbolic groups}, Geom.
  Funct. Anal. \textbf{22} (2012), no.~5, 1361--1399.

\bibitem[Gro85]{gromov-hypgps}
M.~Gromov, \emph{Hyperbolic {G}roups}, in Essays in Group Theory, ed. Gersten,
  MSRI Publ.,vol.8, Springer Verlag (1985), 75--263.

\bibitem[Kar03]{karlsson}
Anders Karlsson, \emph{Free subgroups of groups with nontrivial {F}loyd
  boundary}, Comm. Algebra \textbf{31} (2003), no.~11, 5361--5376.

\bibitem[MS15]{martin-swiat}
Alexandre Martin and Jacek \'{S}wi\polhk{a}tkowski, \emph{Infinitely-ended
  hyperbolic groups with homeomorphic {G}romov boundaries}, J. Group Theory
  \textbf{18} (2015), no.~2, 273--289.

\bibitem[Ser03]{serre-trees}
J.-P. Serre, \emph{Trees}, Springer Monographs in Mathematics, Springer-Verlag,
  Berlin, 2003, Translated from the French original by John Stillwell,
  Corrected 2nd printing of the 1980 English translation. \MR{1954121}

\end{thebibliography}
\bibliographystyle{amsalpha}
	
\end{document}